\documentclass[journal,twoside,web]{ieeecolor}

\usepackage{xr-hyper}
\externaldocument{supp2}

\usepackage{generic}
\usepackage{cite}
\usepackage{amsmath,amssymb,amsfonts}
\usepackage{algorithmic}
\usepackage{graphicx}
\usepackage{algorithm,algorithmic}
\usepackage{hyperref}
\hypersetup{hidelinks=true}

\usepackage{textcomp}
\def\BibTeX{{\rm B\kern-.05em{\sc i\kern-.025em b}\kern-.08em
		T\kern-.1667em\lower.7ex\hbox{E}\kern-.125emX}}

\usepackage{subcaption}

\usepackage{pdfpages}

\newtheorem{theorem}{Theorem}[section]

\newtheorem{corollary}{Corollary}[section]
\newtheorem{definition}{Definition}[section]
\newtheorem{remark}{Remark}[section]
\newtheorem{proposition}{Proposition}[section]

\begin{document}
\title{Value of Multiple-pursuer Single-evader Pursuit-evasion Game with Terminal Cost of Evader's Position: Relaxation of Convexity Condition}
\author{Weiwen Huang, Li Liang, Ningsheng Xu, and Fang Deng, \IEEEmembership{Senior Member, IEEE}
\thanks{(Corresponding author: Fang Deng.)}
\thanks{Weiwen Huang is with the School of Mathematics and Statistics, Beijing Institute of Technology, Beijing 100081, China, also with the National Key Lab of Autonomous Intelligent Unmanned Systems, Beijing 100081, China (e-mail: huangweiwen96@hotmail.com). }
\thanks{Li Liang is with the College of Information Science and Technology, Beijing University of Chemical Technology, Beijing 100029, China (e-mail: lianglibuct@buct.edu.cn).}
\thanks{Ningsheng Xu and Fang Deng are with the School of Automation, Beijing Institute of Technology, Beijing 100081, China, and with the National Key Lab of Autonomous Intelligent Unmanned Systems, Beijing 100081, China, and with Beijing Institute of Technology Chongqing Innovation Center, Chongqing 401120, China (e-mail:  Xuningsheng1@163.com; dengfang@bit.edu.cn).}
\thanks{This article has supplementary materials available on arxiv, provided by the authors.}
}

\maketitle

\begin{abstract}
In this study, we consider a multiple-pursuer single-evader quantitative pursuit-evasion game with payoff function that includes only the terminal cost. The terminal cost is a function related only to the terminal position of the evader. This problem has been extensively studied in target defense games. Here, we prove that a candidate for the value function generated by geometric method is the viscosity solution of the corresponding Hamilton-Jacobi-Isaacs partial differential equation (HJI PDE) Dirichlet problem. Therefore, the value function of the game at each point can be computed by a mathematical program. In our work, the convexity of the terminal cost or the target is not required. The terminal cost only needs to be locally Lipschitz continuous. The cases in which the terminal costs or the targets are not convex are covered. Therefore, our result is more universal than those of previous studies, and the complexity of the proof is improved. We also discuss the optimal strategies in this game and present an intuitive explanation of this value function.
\end{abstract}

\begin{IEEEkeywords}
HJI equation, pursuit-evasion game, viscosity solution
\end{IEEEkeywords}

\section{Introduction}
\label{sec:introduction}

Target defence games have been widely studied in recent years \cite{zhou2012general,chen2016multiplayer ,pachter2017differential,garcia2019strategies,garcia2020optimal,garcia2020multiple,dorothy2024one,selvakumar2019feedback,lee2021guarding,lee2024solutions,shishika2021partial,von2022circular,yan2020guarding,yan2022matching,deng2023multiple,yan2023homicidal,yan2024multiplayer,yan2025pursuit,fu2023justification,xu2024optimal,xu2024one}. In a target defense game, one team of players intends to attack target regions, while another team of players seeks to capture them to protect the target regions.

In this study, we consider a multiple-pursuer single-evader quantitative pursuit-evasion game. The cost function of the game includes only the terminal cost which is related only to the terminal position of the evader. This quantitative game has been widely studied in target defense games \cite{pachter2017differential,garcia2019strategies,yan2020guarding,yan2022matching,fu2023justification,lee2021guarding,lee2024solutions} where the terminal cost is often considered as a type of ``distance'' to the target.

A core problem in quantitative games is deriving the value function. Crandall and
Lions \cite{crandall1983viscosity} introduced the concept of viscosity solution for the first order partial
differential equations. This causes that the characterization
of the value function for differential games yields a satisfactory result. Typically, the value function is the unique viscosity solution to the corresponding HJI PDE Dirichlet problem \cite{lions1985differential,evans1984differential,soravia1993pursuit}. Solving the HJI PDE Dirichlet problem through numerical methods \cite{falcone2006numerical} is a way to obtain the value function. However, for multiple-pursuer single-evader games, this method suffers from the curse of dimensionality. As the number of players increases, the dimensionality of the state space increases. The scale of numerical solutions increases exponentially, causing difficulties in the numerical solution of the HJI PDE.

The geometric method is effective overcoming these difficulties. This method was first proposed by Isaacs in his pioneering work \cite{MR0210469}. He consider the Apollonius circle to be the safe region for the evader and used this concept to discuss the target defense problem. Recently, the geometric method has been widely used in researches on target defense games \cite{pachter2017differential,garcia2019strategies,yan2020guarding,yan2022matching,fu2023justification,lee2021guarding,lee2024solutions}. The researchers treated the games as quantitative games and obtained the candidates for the value function through the geometric method. They also attempted to verify this using HJI PDE. In \cite{pachter2017differential,garcia2019strategies,yan2020guarding,yan2022matching}, the terminal cost was assumed to be a specific function for deriving the closed form of the candidate. The results of these studies were not universal. In \cite{fu2023justification}, the authors no longer required a specific form of the terminal function, but their verification of the viscosity solution was limited to some differentiable points. This is not sufficient to indicate that the candidate function is the viscosity solution of the HJI PDE, because it is also possible that a continuous function which satisfies the HJI PDE at all differentiable points in the classical sense is not the viscosity solution (see \cite[Subsection I.3]{crandall1983viscosity}). The work \cite{lee2024solutions} is the latest in this field. In this study, the terminal cost is the distance from the evader's terminal position to an arbitrary convex target in Euclidean space, which is a continuously differentiable and convex function. Through convex analysis theory, the authors derived Clarke's generalized gradient of the candidate function as an outer estimate of its superdifferential and subdifferential. By this estimate, they demonstrated that the candidate function is the viscosity solution of the HJI PDE. This verification is more universal and complete than that in previous studies. However, their results also have some limitations. The cases in which the terminal costs or the targets are not convex are not considered in any of the works mentioned above, including \cite{lee2024solutions}.

The contributions of this study are as follows:

(i). To relax the conditions in \cite{lee2024solutions}, we prove that the candidate for the value function obtained by the geometric method is a viscosity solution of the corresponding HJI PDE Dirichlet problem without requiring the convexity of the terminal cost. The terminal cost only needs to be locally Lipschitz continuous. The approach stated in \cite{lee2024solutions} is not directly applicable to this scenario. We use Rockafellar's results \cite{rockafellar1982lagrange} regarding the differential properties of optimal value functions in nonlinear program to obtain the outer estimate of the Clarke's generalized gradient of the candidate function (\autoref{estimate of generalized gradient}), which is also an outer estimate of the superdifferential and the subdifferential. The relaxation of the conditions also causes the Clarke's generalized gradient of the candidate function to have a more complex structure. Therefore, the verification of the viscosity solution for the candidate function becomes more difficult. Thus, unlike the simple case in \cite{lee2024solutions}, our verification is divided into two parts: the verification of the viscosity subsolution (\autoref{th_sub2}) and the verification of the viscosity supersolution (\autoref{th_super}). Some geometric properties of the evader's dominance region, particularly \autoref{key_C-oval_property}, are used many times in this process. We summarize the verification in \autoref{th_sum_viscosity}. Our proof covers cases which the previous works have not covered.

(ii). Because the convexity of the terminal cost is not required, equilibrium state feedback strategies in the form of \cite{fu2023justification,yan2022matching,lee2024solutions} may not exist in the game. In Section \ref{sec_optimal_strategies}, we discuss the optimal strategy for this problem and provide an intuitive explanation of the value function from  a geometric method perspective.

The remainder of this paper is organized as follows. In Section \ref{sec_problem description}, we introduce some notations and terminologies and present the problem that we study. In Section \ref{sec_technical_preliminaries}, we introduce some technical preliminaries. In Section \ref{sec_verification of viscosity solution}, we verify the viscosity solution of the HJI PDE, which is the primary result of this study. In Section \ref{sec_optimal_strategies}, we discuss the optimal strategies for this game. In Section \ref{sec_application}, we show the application of the problem which is studied in this paper. Finally, we conclude the paper in Section \ref{sec_conclusion}.

\section{Problem description}\label{sec_problem description}
\subsection{Basic notation and concept}
\begin{table}[h]
	\caption{Notation}
	\label{notation_table}
	\centering
	\setlength{\tabcolsep}{3pt}
	\begin{tabular}{|p{50pt}|p{165pt}|}
		\hline
		Notation & Meaning\\
		\hline
		$\mathbb{R}^n$ & $n$-dimensional Euclidean space  \\
		$\mathbb{S}^n$ & $n$-sphere  \\
		$\left\| \cdot\right\| $ & Euclidean norm \\
		$x^\top$ & the transpose of vector $x$\\
		$B(x_0,r)$ & $\left\lbrace x\in\mathbb{R}^n:\left\| x-x_0\right\|<r \right\rbrace $ \\
		$D f(x)$ & the gradient of $f$ at $x$\\
		$D_y f(y,z)$ &the partial derivative of $f$ with respect to $y$ at $(y,z)$\\
		$\partial f(x)$ & the Clark's generalized gradient of $f$ at $x$ \cite{clarke1990optimization} \\
		$\partial_y f(y,z)$ & the Clark's generalized gradient of $f$  with respect to $y$ at $(y,z)$  \\
		$D^+ f(x)$ & the superdifferential of $f$ at $x$  \cite{bardi1997optimal}\\
		$D^- f(x)$ & the subdifferential of $f$ at $x$ \cite{bardi1997optimal} \\
		$\overline{A}$ & the closure of set $A$\\
		$A^\circ$ & the interior of set $A$\\
		$\partial A$ & the boundary of set $A$\\
		$\mathrm{co} A$ & the convex hull of set $A$\\
		$[n]$ & the set $\left\lbrace 1,2,\dots,n\right\rbrace$ \\
		\hline
	\end{tabular}
\end{table}
See Table.~\ref{notation_table} 
for the basic notation. 

The value function of a quantitative game may not be differentiable everywhere. In order to characterize the value function by the HJI PDE, the concept of viscosity solutions is applied. Below are some of the basic concepts about viscosity solutions.
\begin{definition}[Superdifferential and subdifferential \cite{bardi1997optimal}]
	Set $\Omega$ as an open domain in ${R}^n$, $f$ as a function from $\Omega$ to $\mathbb{R}$. Let $x\in \Omega$.
	The sets
	\begin{align*}
		&D^+f(x)\\
		&\triangleq 
		\left\lbrace 
		p\in\mathbb{R}^n:
		\limsup_{y\rightarrow x,y\in\Omega} 
		\frac
		{f(y)-f(x)-p^\top(y-x)}
		{\left\| y-x\right\| }
		\le 0
		\right\rbrace ,\\
		&D^-f(x)\\
		&\triangleq 
		\left\lbrace 
		p\in\mathbb{R}^n:
		\liminf_{y\rightarrow x,y\in\Omega} 
		\frac
		{f(y)-f(x)-p^\top(y-x)}
		{\left\| y-x\right\| }
		\ge 0
		\right\rbrace 
	\end{align*}
	are called the superdifferential and the subdifferential of $f$ at $x$, respectively. 
\end{definition}
\begin{remark}
	When $f$ is differentiable at $x$, $D^+f(x)=D^-f(x)=\left\lbrace Df(x)\right\rbrace $.
\end{remark}
\begin{definition}[Viscosity solution \cite{bardi1997optimal}]
	Let $\Omega$ be an open domain in ${R}^n$. Let $F=F(x,r,p)$ be a continuous function on $\Omega\times \mathbb{R}\times \mathbb{R}^n$. Consider the following partial differential equation:
	\begin{equation}\label{eq_PDE}
		F(x,f(x),Df(x))=0.
	\end{equation}
	A continuous function $f:\Omega\rightarrow\mathbb{R}$ is called a viscosity subsolution of the partial differential equation \eqref{eq_PDE} if 
	\begin{align*}
		F(x,f(x),p)\le0, \forall x\in \Omega,\ \forall p \in D^+f(x).
	\end{align*}
	A continuous function $f:\Omega\rightarrow\mathbb{R}$ is called a viscosity supersolution of the partial differential equation \eqref{eq_PDE} if 
	\begin{align*}
		F(x,f(x),p)\ge0, \forall x\in \Omega,\ \forall p \in D^-f(x).
	\end{align*}
	Finally, $f$ is called a viscosity solution of \eqref{eq_PDE} if it is simultaneously a viscosity subsolution and a viscosity supersolution. 
\end{definition}

The generalized gradients/differentials of some nonsmooth functions are also involved in our work. Below are some of the basic concepts about locally Lipschitz continuity and Clarke's generalized gradients.
\begin{definition}[Lipschitz continuous]
	Let $\Omega$ be a subset of $\mathbb{R}^n$. Let $L\ge 0$. A function $f:\Omega\rightarrow\mathbb{R}$ is said to be Lipschitz continuous (of rank $L$) on $\Omega$ if
	\begin{align*}
		\left| f(x_1)-f(x_2)\right|\le L\left\|x_1-x_2 \right\|,\ \forall x_1,x_2\in \Omega.  
	\end{align*}
\end{definition}

\begin{definition}[Locally Lipschitz continuous]
	Let $\Omega$ be a subset of $\mathbb{R}^n$. Let $L\ge 0$. Given a point $x\in \Omega$, a function $f:\Omega\rightarrow\mathbb{R}$ is said to be locally Lipschitz continuous (of rank $L$) near $x$ if there exist a constant $\epsilon>0$ such that $f$ is Lipschitz continuous (of rank $L$) on $B(x,\epsilon)\cap \Omega$. 
	$f$ is said to be be locally Lipschitz continuous on $\Omega$ if $f$ is locally Lipschitz continuous near any point of $\Omega$. 
\end{definition}

\begin{definition}[Clarke's generalized gradient \cite{clarke1990optimization}]\label{def_clarke_D}
	Let $\Omega$ be a open subset of $\mathbb{R}^n$. Let $x$ be a point of $\Omega$. The function $f:\Omega\rightarrow\mathbb{R}$ is locally Lipschitz continuous near $x$.   
	The set
	\begin{align*}
		\partial f (x)\triangleq\mathrm{co}\left\lbrace p\in\mathbb{R}^n:p=\lim_{n\rightarrow+\infty}Df(x_n),x_n\rightarrow x \right\rbrace 
	\end{align*}
	is called the Clarke's generalized gradient of $f$ at $x$.
\end{definition}

\begin{remark}\label{th_differentiable_Clarke}
	Rademacher's theorem \cite{evans1992measure} states that a  function which is Lipschitz continuous on an open subset of $\mathbb{R}^n$ is differentiable almost everywhere on that subset. Thus, \autoref{def_clarke_D} is well-defined.
\end{remark}
\begin{remark}
	The concept of Clarke's generalized gradients is compatible with that of classical gradients. When $f$ is differentiable at $x$, $\partial f(x)=\left\lbrace Df(x)\right\rbrace $.
\end{remark}

\begin{proposition}[2.1.2 proposition in page 27 of \cite{clarke1990optimization}]\label{th_Clark_GG_bounded}
	Let $\Omega$ be a subset of $\mathbb{R}^n$. Let $x$ be a point of $\Omega$. The function $f$ is locally Lipschitz continuous of rank $L$ near $x$. Then, for any $p\in \partial f(x)$, $\left\| p\right\| \le L$.
\end{proposition}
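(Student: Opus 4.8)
The plan is to unwind \autoref{def_clarke_D} and reduce the desired bound to a pointwise estimate on the classical gradient $Df$ at the points where $f$ is differentiable, and then to pass to limits and take the convex hull. Since the convex hull enters only at the very last step, and a closed ball is convex, the crux of the argument is the pointwise gradient estimate.

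First I would fix $\epsilon>0$ such that $f$ is Lipschitz continuous of rank $L$ on $B(x,\epsilon)$, and establish the claim that $\left\|Df(y)\right\|\le L$ at every $y\in B(x,\epsilon)$ at which $f$ is differentiable. To prove it, fix such a $y$ and an arbitrary unit vector $v$; for all $t$ with $|t|$ small enough that $y+tv\in B(x,\epsilon)$, the Lipschitz bound gives $\left|f(y+tv)-f(y)\right|\le L|t|$, so dividing by $t$ and letting $t\to 0$ yields $\left|Df(y)^\top v\right|\le L$. Taking the supremum over unit vectors $v$ gives $\left\|Df(y)\right\|\le L$.

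Next, let $p$ be any point of the form $p=\lim_{k\to+\infty}Df(x_k)$ with $x_k\to x$ and each $x_k$ a point of differentiability of $f$; such sequences exist in abundance by Rademacher's theorem, as recorded in \autoref{th_differentiable_Clarke}. For $k$ large we have $x_k\in B(x,\epsilon)$, hence $\left\|Df(x_k)\right\|\le L$ by the previous step, and therefore $\left\|p\right\|\le L$ by continuity of the norm. Thus the set $S\triangleq\left\lbrace p\in\mathbb{R}^n:p=\lim_{k\to+\infty}Df(x_k),\ x_k\to x\right\rbrace$ is contained in the closed ball $\left\lbrace q\in\mathbb{R}^n:\left\|q\right\|\le L\right\rbrace$.

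Finally, since this closed ball is convex and contains $S$, it also contains $\mathrm{co}\,S=\partial f(x)$, which is exactly the assertion that $\left\|p\right\|\le L$ for every $p\in\partial f(x)$. I do not anticipate any genuine obstacle here: the entire argument is elementary, the only point requiring a word of care being that \autoref{def_clarke_D} is meaningful, i.e.\ that there are enough differentiability points near $x$ to form the limits defining $S$, which is precisely the content of \autoref{th_differentiable_Clarke}.
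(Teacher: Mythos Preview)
Your argument is correct and complete: the pointwise bound $\left\|Df(y)\right\|\le L$ at differentiability points, passage to limits, and the convexity of the closed ball together give exactly the conclusion. The paper does not supply its own proof of this proposition; it simply quotes it as a known result from Clarke's monograph, so there is no in-paper argument to compare against.
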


For any $s\in\mathbb{R}$, $A,B\subseteq \mathbb{R}^n$ and $c\in\mathbb{R}^n$, we denote $A+B\triangleq\left\lbrace a+b: a\in A ,b\in B\right\rbrace$, $sA\triangleq\left\lbrace sa : a\in A\right\rbrace$ and $c+A\triangleq\left\lbrace c\right\rbrace +A$.
\begin{proposition}[Corollary 2 in page 39 of \cite{clarke1990optimization}]\label{th_Clarke_linear_combination}
	Let $\Omega$ be a subset of $\mathbb{R}^n$. Let $x$ be a point of $\Omega$. The function $f_1,f_2$ are locally Lipschitz continuous near $x$. $s_1,s_2\in \mathbb{R}$. Then,
	\begin{align*}
		\partial\left( s_1 f_1+s_2 f_2\right) (x)\subseteq s_1\partial f_1(x)+s_2\partial f_2(x).
	\end{align*}
	The equality holds if at least one of $f_1$ and $f_2$ is continuously differentiable near $x$.
\end{proposition}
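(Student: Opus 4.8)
I would prove this through the Clarke directional derivative. For a function $g$ that is locally Lipschitz near $x$, set
\[
g^{\circ}(x;v)=\limsup_{y\to x,\ t\downarrow 0}\frac{g(y+tv)-g(y)}{t}.
\]
The plan is to invoke the standard fact from the Clarke calculus in \cite{clarke1990optimization} that $v\mapsto g^{\circ}(x;v)$ is finite, positively homogeneous and subadditive, and is in fact exactly the support function of the nonempty compact convex set $\partial g(x)$, that is, $\partial g(x)=\{p\in\mathbb{R}^{n}:p^{\top}v\le g^{\circ}(x;v)\ \text{for all }v\in\mathbb{R}^{n}\}$. Once this characterization is in hand, the whole statement reduces to comparing support functions of compact convex sets.

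First I would dispose of the scalar multiples directly from \autoref{def_clarke_D}: if $s\neq 0$ and $x_{k}\to x$ with each $f_{i}$ differentiable at $x_{k}$, then $sf_{i}$ is differentiable at $x_{k}$ with $D(sf_{i})(x_{k})=s\,Df_{i}(x_{k})$, so the limit points of $\{D(sf_{i})(x_{k})\}$ are exactly $s$ times those of $\{Df_{i}(x_{k})\}$; since the linear map $q\mapsto sq$ commutes with taking convex hulls, $\partial(sf_{i})(x)=s\,\partial f_{i}(x)$, and the case $s=0$ is trivial. Hence it is enough to prove $\partial(g_{1}+g_{2})(x)\subseteq\partial g_{1}(x)+\partial g_{2}(x)$ for $g_{1},g_{2}$ locally Lipschitz near $x$ and then apply it to $g_{i}=s_{i}f_{i}$.

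For the sum, the key observation is that $\limsup$ is subadditive, so $(g_{1}+g_{2})^{\circ}(x;v)\le g_{1}^{\circ}(x;v)+g_{2}^{\circ}(x;v)$ for every $v$. The right-hand side is the sum of the support functions of $\partial g_{1}(x)$ and $\partial g_{2}(x)$, hence the support function of the Minkowski sum $\partial g_{1}(x)+\partial g_{2}(x)$, which is compact and convex (its boundedness being \autoref{th_Clark_GG_bounded}). Since the left-hand side is the support function of $\partial(g_{1}+g_{2})(x)$, comparing the support functions of two compact convex sets yields $\partial(g_{1}+g_{2})(x)\subseteq\partial g_{1}(x)+\partial g_{2}(x)$; together with the scalar rule this gives the asserted inclusion.

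For the equality claim, suppose without loss of generality that $f_{1}$ is continuously differentiable near $x$. Then $f_{1}^{\circ}(x;v)=Df_{1}(x)^{\top}v$ with the $\limsup$ being an ordinary limit, so the $\limsup$ of the sum equals the sum of the two terms and $(f_{1}+f_{2})^{\circ}(x;v)=Df_{1}(x)^{\top}v+f_{2}^{\circ}(x;v)$ for all $v$; equality of support functions then upgrades the inclusion to $\partial(f_{1}+f_{2})(x)=\{Df_{1}(x)\}+\partial f_{2}(x)=\partial f_{1}(x)+\partial f_{2}(x)$, and rescaling by $s_{1},s_{2}$ finishes the proof. (Alternatively, $\supseteq$ can be recovered by applying the already-proved inclusion to $f_{2}=(f_{1}+f_{2})+(-f_{1})$.) The one genuinely nontrivial ingredient is the first one, identifying $g^{\circ}(x;\cdot)$ with the support function of $\partial g(x)$, which rests on Rademacher's theorem together with a density and mean-value argument; but since \autoref{def_clarke_D} is itself taken from \cite{clarke1990optimization}, it is legitimate to quote this characterization from the same source, and everything after it is elementary manipulation of limsups and support functions.
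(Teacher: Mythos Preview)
The paper does not prove this proposition; it is quoted directly from Clarke's book (the label explicitly reads ``Corollary 2 in page 39 of \cite{clarke1990optimization}'') and is used throughout only as a black box. So there is no proof in the paper to compare against.

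Your argument is correct and is essentially the standard one from \cite{clarke1990optimization}: reduce to the sum via the scalar rule $\partial(sf)(x)=s\,\partial f(x)$, use subadditivity of the $\limsup$ to get $(g_{1}+g_{2})^{\circ}(x;v)\le g_{1}^{\circ}(x;v)+g_{2}^{\circ}(x;v)$, and then pass from an inequality of support functions to an inclusion of the corresponding compact convex sets. The equality step when one summand is $C^{1}$ is also handled correctly, since the convergent term can be split off from the $\limsup$. The only external input you invoke is the identification of $g^{\circ}(x;\cdot)$ with the support function of $\partial g(x)$, and you are right that this is exactly the content that justifies taking \autoref{def_clarke_D} as the definition, so citing it from \cite{clarke1990optimization} is appropriate here.
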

\begin{theorem}[Exercise 4.4 in Chapter II of \cite{bardi1997optimal}]\label{th_D+D-partial}
	Let $\Omega$ be a open domain in $\mathbb{R}^n$. Let $x\in\Omega$. The function $f:\Omega\rightarrow\mathbb{R}$ is locally Lipschitz continuous near $x$. Then,
	\begin{align*}
		D^+ f (x)\cup D^- f (x)\subseteq\partial f (x).
	\end{align*}
\end{theorem}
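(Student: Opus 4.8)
The plan is to reduce the statement to the single inclusion $D^+f(x)\subseteq\partial f(x)$ and then prove that inclusion by a separation argument. The reduction is cheap: one checks from the definitions that $p\in D^-f(x)$ if and only if $-p\in D^+(-f)(x)$ (negate the difference quotient and interchange $\liminf$ with $\limsup$), while \autoref{def_clarke_D} gives $\partial(-f)(x)=-\partial f(x)$ directly (replacing $f$ by $-f$ replaces each $Df(x_n)$ by $-Df(x_n)$ and $\mathrm{co}$ commutes with multiplication by $-1$). Hence $D^-f(x)=-D^+(-f)(x)\subseteq-\partial(-f)(x)=\partial f(x)$ as soon as the superdifferential inclusion is available, applied to $-f$.

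For $D^+f(x)\subseteq\partial f(x)$, I would fix $p\in D^+f(x)$ and use two standard facts about Clarke's generalized gradient from \cite{clarke1990optimization}: that $\partial f(x)$ is a nonempty, compact (by \autoref{th_Clark_GG_bounded}), convex subset of $\mathbb{R}^n$, and that its support function is the generalized directional derivative, $\max_{q\in\partial f(x)}q^\top v=f^\circ(x;v):=\limsup_{y\to x,\ t\downarrow0}\frac{f(y+tv)-f(y)}{t}$ for every $v$. By the separation theorem it then suffices to verify $p^\top v\le f^\circ(x;v)$ for every $v\in\mathbb{R}^n$. Suppose not, so that $p^\top v>f^\circ(x;v)$ for some $v\neq0$; then there exist $\eta>0$ and $\delta>0$ such that $f(y+tv)-f(y)\le(p^\top v-\eta)t$ whenever $\|y-x\|<\delta$ and $0<t<\delta$.

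Specialising this to points of the segment $t\mapsto x+tv$ shows that $g(t):=f(x+tv)$ satisfies $g(t_2)-g(t_1)\le(p^\top v-\eta)(t_2-t_1)$ for all $t_1<t_2$ in a small interval $(-\rho,\rho)$, i.e. $t\mapsto g(t)-(p^\top v-\eta)t$ is non-increasing there. Evaluating at $t\in(-\rho,0)$ against $t=0$ yields $f(x+tv)\ge f(x)+(p^\top v-\eta)t$, and therefore, with $y=x+tv$,
\begin{align*}
\frac{f(y)-f(x)-p^\top(y-x)}{\|y-x\|}\ \ge\ \frac{-\eta t}{|t|\,\|v\|}\ =\ \frac{\eta}{\|v\|}\ >\ 0
\end{align*}
for all such $t$. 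Letting $t\uparrow0$ contradicts $\limsup_{y\to x}\frac{f(y)-f(x)-p^\top(y-x)}{\|y-x\|}\le0$, i.e. $p\in D^+f(x)$; this forces $p\in\partial f(x)$ and completes the argument.

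I expect the only genuinely nonelementary ingredient to be the support-function identity $\max_{q\in\partial f(x)}q^\top v=f^\circ(x;v)$ linking the Clarke gradient to the generalized directional derivative; the rest is manipulation of difference quotients plus a separation argument. If one prefers to avoid invoking that identity, an equally short route is available: $p\in D^+f(x)$ holds if and only if some $\phi\in C^1$ near $x$ with $D\phi(x)=p$ makes $f-\phi$ attain a local maximum at $x$ (a standard characterization, e.g. in \cite{bardi1997optimal}); a local extremum of the locally Lipschitz function $f-\phi$ forces $0\in\partial(f-\phi)(x)$; and since $\phi$ is $C^1$, \autoref{th_Clarke_linear_combination} gives the equality $\partial(f-\phi)(x)=\partial f(x)-p$, hence $p\in\partial f(x)$.
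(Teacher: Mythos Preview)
The paper does not supply its own proof of this statement; it is quoted as Exercise~4.4 in Chapter~II of \cite{bardi1997optimal} and used as a black box in the proofs of \autoref{th_sub2} and \autoref{th_super}. There is therefore nothing in the paper to compare your argument against.

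For what it is worth, your proposal is correct. The reduction from $D^-f$ to $D^+(-f)$ is clean, and the contradiction argument via the support-function identity $\max_{q\in\partial f(x)}q^\top v=f^\circ(x;v)$ goes through as written; the only small point to make explicit is that the increment bound $f(y+tv)-f(y)\le(p^\top v-\eta)t$ holds initially only for $0<t<\delta$, so to deduce the monotonicity of $t\mapsto g(t)-(p^\top v-\eta)t$ on a full interval $(-\rho,\rho)$ one telescopes over a partition into steps of length less than $\delta$ (or invokes Lipschitz continuity of $g$ and differentiates a.e.). Your alternative route through the $C^1$ test-function characterization of $D^+f(x)$, the Fermat rule $0\in\partial(f-\phi)(x)$ at a local maximum, and the exact sum rule of \autoref{th_Clarke_linear_combination} is equally valid and arguably cleaner, since it avoids invoking the support-function identity from \cite{clarke1990optimization}.
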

\subsection{Formulation of the game}
Consider a pursuit-evasion game taking place in an $n$-dimensional Euclidean space $\mathbb{R}^n (n\ge2)$. There are $m$ pursuers and one evader. We assume that all of them are mass points with Isaacs' simple motion\cite{MR0210469}. The maximum speed of all pursuers exceeds that of the evader. Without loss of generality, we assume that the maximum speed of the evader is $1$ and the maximum speed of the $i$th pursuer is $\alpha_i$ ($\alpha_i>1,\forall i\in[m]$). Let $l_i > 0$ be the capture radius of the $i$th pursuer. We assume that the distance between the $i$th pursuer and the evader at the initial moment is greater than $l_i$. When the distance between the $i$th pursuer and the evader is less than or equal to $l_i$, the $i$th pursuer captures the evader. Let $x_{P_1},\dots,x_{P_m},x_E\in\mathbb{R}^n$ be the location coordinates of the $m$ pursuers and the evader. The equations of motion are as follows:
\begin{equation}\label{dynamic system} 
	\begin{split}
		&\dot{x}_{P_i}(t)=\alpha_i u_{P_i}(t) , i=1,\dots,m,\\
		&\dot{x}_E (t)= u_E (t),  \\
		&x_{P_i}(0)=x_{P_i}^0, i=1,\dots,m,\\
		&x_E(0)=x_E^0,
	\end{split}
\end{equation}
where $x_{P_1}^0,\dots,x_{P_m}^0,x_E^0\in\mathbb{R}^n$  represent the initial positions and $u_{P_1}(\cdot),\dots,u_{P_m}(\cdot),u_E (\cdot)$ are the control input functions of the $m$ pursuers and the evader. $u_{P_1}(\cdot),\dots,u_{P_m}(\cdot),u_E (\cdot)$ are functions from the time interval $[0,+\infty)$ to $B(0,1)$.
Let 
\begin{align*}
	\mathcal{U}_{t_0} \triangleq \left\lbrace u:\mathbb[t_0,+\infty)\rightarrow B(0,1):u \text{ is Lebesgue measurable} \right\rbrace.
\end{align*}
The players' control input functions $u_{P_1}(\cdot),\dots,u_{P_m}(\cdot),u_E (\cdot)$  belong to $\mathcal{U}_0$. The $i$th pursuer's movement trajectory with the initial condition $x_{P_i}^0$ and the input function $u_{P_i}(\cdot)$ is denoted by $x_{P_i}(t;x_{P_i}^0,u_{P_i})$. The evader's movement trajectory with the initial condition $x_E^0$ and the input function $u_E(\cdot)$ is denoted by $x_E(t;x_E^0,u_E)$. For simplicity, $x_{P_i}(t;x_{P_i}^0,u_{P_i})$ and $x_E(t;x_E^0,u_E)$ are abbreviated without ambiguity as $x_{P_i}(t;u_{P_i})$ and $x_E(t;u_E)$ respectively. Let $\mathbf{y}=(x_{P_1}^\top,\dots,x_{P_m}^\top,x_E^\top)^\top$ and $\mathbf{p}=(p_{P_1}^\top,\dots,p_{P_m}^\top,p_E^\top)^\top$.
If one of the pursuers captures the evader, the game ends. The terminal set of the game is
\begin{align*}
	\mathcal{T}=\left\lbrace \mathbf{y}: \min_i( \left\| x_{P_i}-x_E\right\|-l_i )\le 0 \right\rbrace.  
\end{align*}
Its boundary is 
\begin{align*}
	\partial\mathcal{T}=\left\lbrace \mathbf{y}: \min_i( \left\| x_{P_i}-x_E\right\|-l_i )= 0 \right\rbrace.  
\end{align*}
The terminal time of the game is 
\begin{equation}
	\begin{aligned}
		&t_f(x_{P_1}^0,\dots,x_{P_m}^0,x_E^0,u_{P_1},\dots,u_{P_m},u_E) \\
		\triangleq& \inf \big\lbrace t : \min_i (\left\| x_{P_i}(t;x_{P_i}^0,u_{P_i})-x_E(t;x_E^0,u_E)\right\|-l_i) \le 0  \big\rbrace .
	\end{aligned}
\end{equation}
This can be abbreviated as $t_f(u_{P_1},\dots,u_{P_m},u_E)$ or $t_f$ when there is no ambiguity. We agree that the infimum of the empty set is $+\infty$. $t_f=+\infty$ implies that no pursuer can capture the evader with the initial condition and the control input functions.
Let
\begin{align*}
	&t_f^i(x_{P_i}^0,x_E^0,u_{P_i},u_E) \\
	\triangleq& \inf \left\lbrace t :\left\| x_{P_i}(t;x_{P_i}^0,u_{P_i})-x_E(t;x_E^0,u_E)\right\| \le l_i  \right\rbrace .
\end{align*}
This can also be abbreviated as $t_f^i(u_{P_i},u_E)$ or $t_f^i$ when no ambiguity exists. It is obvious that
\begin{align*}
	&t_f(x_{P_1}^0,\dots,x_{P_m}^0,x_E^0,u_{P_1},\dots,u_{P_m},u_E)\\
	=&\min_i t_f^i(x_{P_i}^0,x_E^0,u_{P_i},u_E).
\end{align*}
Let us now consider a quantitative game. $g$ is a locally Lipschitz continuous function from $\mathbb{R}^n$ to $\mathbb{R}$. We do not require the specific form of $g$. We define the Mayer type payoff function as follows,
\begin{align*}
	&J(x_{P_1}^0,\dots,x_{P_m}^0,x_E^0,u_{P_1},\dots,u_{P_m},u_E)\\
	\triangleq& g(x_E(t_f))\\
	=&g(x_E(t_f(x_{P_1}^0,\dots,x_{P_m}^0,x_E^0,u_{P_1},\dots,u_{P_m},u_E);x_E^0,u_E)),
\end{align*}
which the $m$ pursuers want to maximize and the evader wants to minimize. The terminal cost is related only to the evader's position. In target defense games \cite{pachter2017differential,garcia2019strategies,yan2020guarding,yan2022matching,fu2023justification,lee2021guarding,lee2024solutions}, $g$ is typically considered as a type of ``distance'' to the target.
The Hamiltonian \cite{soravia1993pursuit}\cite{bardi1997optimal} of this problem is as follows:
\begin{align*}
	&H(\mathbf{y},\mathbf{p})\\
	=&\inf_{u_{P_1},\dots,u_{P_m}\in B(0,1)} \sup_{u_E\in B(0,1)} (-\sum_{i=1}^{m}\alpha_i{p}_{P_i}^\top u_{P_i}-{p}_E^\top u_E)\\
	=&\sup_{u_E\in B(0,1)} \inf_{u_{P_1},\dots,u_{P_m}\in B(0,1)}  (-\sum_{i=1}^{m}\alpha_i{p}_{P_i}^\top u_{P_i}-{p}_E^\top u_E)\\
	=&-\sum_{i=1}^m\alpha_i\left\| {p}_{P_i}\right\| +\left\| {p}_E\right\| ,
\end{align*}
which satisfies the Isaacs condition \cite{MR0210469}.
Then we obtain the Hamilton-Jacobi-Isaacs partial differential equation (HJI PDE) Dirichlet problem of the quantitative game,
\begin{equation}\label{HJI}
	\begin{cases}
		H(\mathbf{y},DV(\mathbf{y}))=0, &\forall \mathbf{y} \in \Omega,\\
		V=g(x_E),  &\forall \mathbf{y} \in \partial\mathcal{T},
	\end{cases}
\end{equation}
where $\Omega=\mathbb{R}^{(m+1)n}\setminus\mathcal{T}$.
\subsection{Dominance region and candidate of value function}\label{subsec_Dominance region}
Let
\begin{align*}
	d_i(x;x_{P_i},x_{E})\triangleq -\left\| x-x_{P_i}\right\|+\alpha_i \left\| x-x_E \right\| + l_i.
\end{align*}
Given the position of $i$th pursuer and evader $x_{P_i},x_E$ which satisfies $\left\| x_{P_i}-x_E\right\|\ge l_i $, let 
\begin{align*}
	\mathcal{D}_i(x_{P_i},x_E)\triangleq \left\lbrace x\in\mathbb{R}^n:d_i(x;x_{P_i},x_{E}) \le 0\right\rbrace .
\end{align*}
It is easy to obtain: 
\begin{align*}
	\mathcal{D}_i(x_{P_i},x_E)^{\circ}= \left\lbrace x\in\mathbb{R}^n:d_i(x;x_{P_i},x_{E}) < 0\right\rbrace ,\\
	\partial\mathcal{D}_i(x_{P_i},x_E)= \left\lbrace x\in\mathbb{R}^n:d_i(x;x_{P_i},x_{E}) = 0\right\rbrace .
\end{align*}
$\mathcal{D}_i(x_{P_i},x_E)^{\circ}$ is called the dominance region of the evader relative to the $i$th pursuer.

Next, we present another presentation of $\mathcal{D}_i(x_{P_i},x_E)$, which was used in \cite{yan2022matching}\cite{yan2021optimal}. Pick $e\in\mathbb{S}^{n-1}$ and draw a ray from $x_E$ along $e$. We assume that there exists a intersection point between the ray and $\partial\mathcal{D}_i(x_{P_i},x_E)$, denoted by $x_E+\rho_i(x_{P_i},x_E,e) e$, $(\rho_i(x_{P_i},x_E,e)\ge0)$. Then,
\begin{align*}
	\left\| x_E+\rho_i e-x_{P_i}\right\|-\alpha_i \left\| \rho_i e \right\| = l_i.
\end{align*}
By solving the equation about $\rho_i$, we obtain only one non-negative solution:
\begin{equation}\label{rho}
	\begin{split}
		&\rho_i=\rho_i(x_{P_i},x_E,e)\\
		\triangleq&\frac{1}{\alpha_i^2-1} \bigg( -(\alpha_i l_i +e^\top(x_{P_i}-x_E))\\
		 &+\bigg((\alpha_i l_i +e^\top(x_{P_i}-x_E))^2\\
		 &+(\alpha_i^2-1)(\left\| x_{P_i}-x_E\right\|^2-l_i^2)\bigg)^{1/2}\bigg) .
	\end{split}
\end{equation}
The intersection point exists and is unique. The mapping $e\mapsto x_E+\rho_i(x_{P_i},x_E,e)e$ is a bijection from $\mathbb{S}^{n-1}$ to $\partial\mathcal{D}_i(x_{P_i},x_E)$ with inverse mapping $x\mapsto \frac{x-x_E}{\left\| x-x_E\right\| } $. Thus, 
\begin{equation}\label{sphere_homeomorphism}
	\begin{aligned}
		&\partial\mathcal{D}_i(x_{P_i},x_E)\\
		=& \left\lbrace x_E+\rho_i(x_{P_i},x_E,e)e:e\in\mathbb{S}^{n-1}\right\rbrace.
	\end{aligned}
\end{equation}
We can also obtain that
\begin{equation}\label{globe_homeomorphism}
	\begin{aligned}
		&\mathcal{D}_i(x_{P_i},x_E)\\
		=& \left\lbrace x_E+\rho e:e\in\mathbb{S}^{n-1},\rho\in[0,\rho_i(x_{P_i},x_E,e)]\right\rbrace.
	\end{aligned}
\end{equation}
Below are some of the properties of $\mathcal{D}_i(x_{P_i},x_E)$. The proofs are presented in the supplementary material.
\begin{proposition}\label{th C-oval boundedness}
	Assume that $\alpha_i>1$, $l_i\ge 0$ and $x_{P_i},x_E\in\mathbb{R}^n$ satisfy $\left\| x_{P_i}-x_E\right\| >l_i$. Then $\mathcal{D}_i(x_{P_i},x_E)$ is bounded.
\end{proposition}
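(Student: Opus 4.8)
The plan is to show that $\mathcal{D}_i(x_{P_i},x_E)$ is contained in a ball centred at $x_E$ whose radius depends only on $\alpha_i$, $l_i$ and $\left\| x_{P_i}-x_E\right\|$; boundedness is then immediate.

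First I would estimate $d_i$ from below by a single application of the triangle inequality. For every $x\in\mathbb{R}^n$ we have $\left\| x-x_{P_i}\right\|\le\left\| x-x_E\right\|+\left\| x_E-x_{P_i}\right\|$, hence
\begin{align*}
	d_i(x;x_{P_i},x_E)
	&=-\left\| x-x_{P_i}\right\|+\alpha_i\left\| x-x_E\right\|+l_i\\
	&\ge(\alpha_i-1)\left\| x-x_E\right\|-\left\| x_{P_i}-x_E\right\|+l_i.
\end{align*}

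Next, if $x\in\mathcal{D}_i(x_{P_i},x_E)$ then $d_i(x;x_{P_i},x_E)\le 0$, so combining this with the bound above gives
\begin{align*}
	(\alpha_i-1)\left\| x-x_E\right\|\le\left\| x_{P_i}-x_E\right\|-l_i.
\end{align*}
Since $\alpha_i>1$ we may divide by $\alpha_i-1>0$, and since $\left\| x_{P_i}-x_E\right\|>l_i\ge 0$ the right-hand side is a finite positive constant. Therefore every $x\in\mathcal{D}_i(x_{P_i},x_E)$ satisfies $\left\| x-x_E\right\|\le\bigl(\left\| x_{P_i}-x_E\right\|-l_i\bigr)/(\alpha_i-1)$, i.e. $\mathcal{D}_i(x_{P_i},x_E)$ is contained in the closed ball of that radius about $x_E$, which is bounded.

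There is essentially no serious obstacle here; the only points requiring attention are that $\alpha_i-1>0$ (so the final division is legitimate) and that the resulting radius is finite and nonnegative, both of which are guaranteed precisely by the hypotheses $\alpha_i>1$ and $\left\| x_{P_i}-x_E\right\|>l_i$. One could instead bound $\rho_i(x_{P_i},x_E,e)$ uniformly over $e\in\mathbb{S}^{n-1}$ directly from the closed form \eqref{rho} and then invoke the representation \eqref{globe_homeomorphism}, but the estimate above is shorter and avoids manipulating the square-root expression.
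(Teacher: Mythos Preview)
Your proof is correct and yields exactly the bound $\left\|x-x_E\right\|\le\bigl(\left\|x_{P_i}-x_E\right\|-l_i\bigr)/(\alpha_i-1)$ that the paper itself uses (see the reference to $M(\mathbf{y})=\min_i\frac{\left\|x_{P_i}-x_E\right\|-l_i}{\alpha_i-1}$ in the proof of \autoref{th_sum_viscosity}). This is essentially the same triangle-inequality argument as the paper's.
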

\begin{proposition}\label{th_C-oval_convexity}
	Assume that $\alpha_i>1$, $l_i\ge 0$ and $x_{P_i},x_E\in\mathbb{R}^n$ satisfy $\left\| x_{P_i}-x_E\right\| >l_i$. Then $\mathcal{D}_i(x_{P_i},x_E)$ is strictly convex.
\end{proposition}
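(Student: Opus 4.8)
The plan is to write $\mathcal{D}_i(x_{P_i},x_E)$ as an intersection of closed Euclidean balls --- which makes convexity immediate --- and then to upgrade this to strict convexity by ruling out line segments on the boundary via the parallelogram law. Put $P:=x_{P_i}$, $E:=x_E$, $\alpha:=\alpha_i>1$, $l:=l_i\ge 0$, so that $\mathcal{D}_i=\{x\in\mathbb{R}^n:\alpha\|x-E\|+l\le\|x-P\|\}$ and, by hypothesis, $\|P-E\|>l$. Note first that $d_i(E)=l-\|P-E\|<0$, so $E$ lies in the interior of $\mathcal{D}_i$, while $d_i(P)=\alpha\|P-E\|+l>0$, so $P\notin\mathcal{D}_i$; in particular $\mathcal{D}_i$ is a proper closed set with nonempty interior.

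The crucial step is the identity
\[
\mathcal{D}_i(P,E)=\bigcap_{\|w-P\|=l}\bigl\{x\in\mathbb{R}^n:\alpha\|x-E\|\le\|x-w\|\bigr\},
\]
where the index set $\{w:\|w-P\|=l\}$ is understood as the singleton $\{P\}$ when $l=0$. For ``$\subseteq$'': if $x\in\mathcal{D}_i$ then $\|x-P\|\ge\alpha\|x-E\|+l\ge l$, and for any $w$ with $\|w-P\|=l$ the reverse triangle inequality gives $\|x-w\|\ge\|x-P\|-l\ge\alpha\|x-E\|$. For ``$\supseteq$'': let $x$ belong to the right-hand side (so $x\ne P$, since $\alpha\|P-E\|\le l$ would violate $\|P-E\|>l$) and test the inequality against the radial projection $w^\ast:=P+l\,(x-P)/\|x-P\|$ of $x$ onto the sphere; since $\|x-w^\ast\|=\bigl|\,\|x-P\|-l\,\bigr|$, this yields $\alpha\|x-E\|\le\|x-P\|-l$, i.e.\ $x\in\mathcal{D}_i$, when $\|x-P\|\ge l$, and it yields $\alpha\|x-E\|\le l-\|x-P\|$ when $\|x-P\|<l$; the latter is impossible because $\|x-E\|\ge\|P-E\|-\|x-P\|>l-\|x-P\|$, and this is the only place the assumption $\|P-E\|>l$ is used.

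For each fixed $w$ with $\|w-P\|=l$ one has $\|w-E\|\ge\|P-E\|-l>0$, hence $w\ne E$; squaring $\alpha\|x-E\|\le\|x-w\|$ and dividing by $\alpha^2-1>0$ shows that $\{x:\alpha\|x-E\|\le\|x-w\|\}$ equals the closed Apollonius ball $\overline{B(c_w,R_w)}$ with $c_w=(\alpha^2E-w)/(\alpha^2-1)$ and some radius $R_w>0$ (a genuine ball, since $E$ satisfies $\alpha\|E-E\|=0<\|E-w\|$ and hence lies in its interior). Thus $\mathcal{D}_i=\bigcap_{\|w-P\|=l}\overline{B(c_w,R_w)}$ is an intersection of convex sets, hence convex (and also bounded, by \autoref{th C-oval boundedness}). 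For strict convexity, suppose $[p,q]\subseteq\partial\mathcal{D}_i$ with $p\ne q$ and set $m:=\tfrac12(p+q)\in\partial\mathcal{D}_i$. Because $w\mapsto(c_w,R_w)$ is continuous on the compact sphere $\{w:\|w-P\|=l\}$, we must have $\|m-c_{w_0}\|=R_{w_0}$ for some $w_0$; otherwise $\min_w\bigl(R_w-\|m-c_w\|\bigr)>0$ would place a ball $B(m,\varepsilon)\subseteq\mathcal{D}_i$, contradicting $m\in\partial\mathcal{D}_i$. But then $p,q\in\overline{B(c_{w_0},R_{w_0})}$, so by the parallelogram law $\|p-c_{w_0}\|^2+\|q-c_{w_0}\|^2=2R_{w_0}^2+\tfrac12\|p-q\|^2\le 2R_{w_0}^2$, forcing $p=q$, a contradiction. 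Hence $\partial\mathcal{D}_i$ contains no nondegenerate segment and $\mathcal{D}_i$ is strictly convex.

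I expect the main obstacle to be the identity of the second paragraph --- specifically the ``$\supseteq$'' inclusion and the need to discard phantom points with $\|x-P\|<l$, which is exactly where $\|P-E\|>l$ enters --- together with making precise, in the strict-convexity step, that a bounding sphere is active at a given boundary point, which relies on compactness of $\{w:\|w-P\|=l\}$. As an alternative to the parallelogram argument, once convexity is known one may verify strict convexity by curvature: $d_i$ is $C^2$ with $\nabla d_i\neq 0$ on a neighborhood of $\partial\mathcal{D}_i$ (since $P,E\notin\partial\mathcal{D}_i$), and using $\nabla^2\|x-a\|=\|x-a\|^{-1}(I-uu^\top)$ with $u=(x-a)/\|x-a\|$ (write $u_P,u_E$ for the corresponding unit vectors), together with the tangency relation $u_P^\top v=\alpha\,u_E^\top v$ on $\{v:\nabla d_i(x)^\top v=0\}$, a short computation gives $v^\top\nabla^2 d_i(x)\,v\ge\dfrac{\alpha^2-1}{\alpha\|x-E\|}\|v\|^2>0$ for every nonzero such $v$, so the boundary hypersurface has everywhere positive normal curvatures with respect to the inner normal.
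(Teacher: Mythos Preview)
Your proof is correct. The representation of $\mathcal{D}_i$ as the intersection $\bigcap_{\|w-P\|=l}\{x:\alpha\|x-E\|\le\|x-w\|\}$ of Apollonius balls is valid, with both inclusions properly justified --- the ``$\supseteq$'' direction correctly isolates the role of the hypothesis $\|P-E\|>l$ in ruling out phantom points with $\|x-P\|<l$. Since $\alpha>1$ and each $w\neq E$, every set in the intersection is a genuine closed Euclidean ball, so convexity is immediate. The strict-convexity step via the parallelogram law is sound, and the compactness argument guaranteeing an active ball at every boundary point is correctly invoked (it covers the case $l=0$ trivially, since the index set is then a singleton). Your alternative Hessian computation is also correct; the sharp constant $\tfrac{\alpha^2-1}{\alpha\|x-E\|}$ emerges once one uses the improved bound $(u_E^\top v)^2\le\|v\|^2/\alpha^2$, which follows from $u_P^\top v=\alpha\,u_E^\top v$ combined with $|u_P^\top v|\le\|v\|$.

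The paper defers its own proof of this proposition to the supplementary material, so a line-by-line comparison with the authors' argument is not possible from the main text. Your intersection-of-balls route is a clean structural argument that works uniformly in every dimension $n\ge2$, makes the role of the assumption $\|x_{P_i}-x_E\|>l_i$ explicit, and avoids any coordinate computation with the Cartesian-oval parametrization $\rho_i$ set up in Subsection~\ref{subsec_Dominance region}; if the authors proceed instead by a direct second-order calculation on $d_i$ (as in your alternative paragraph), the two approaches converge there, but your primary argument has the advantage of yielding ordinary convexity without any smoothness considerations.
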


Let 
\begin{align*}
	\mathcal{D}^*(\mathbf{y})\triangleq\bigcap_i{\mathcal{D}_i(x_{P_i},x_E)}.
\end{align*}
It is easy to obtain the following representation,
\begin{align*}
	&\mathcal{D}^*(\mathbf{y})\\
	=&\left\lbrace x\in\mathbb{R}^n: \min_{i} d_i(x;x_{P_i},x_E) \le 0\right\rbrace\\
	=&\left\lbrace x_E+\rho e:e\in\mathbb{S}^{n-1},\rho\in[0,\min_i\rho_i(x_{P_i},x_E,e)]\right\rbrace ,\\
	&\partial \mathcal{D}^*(\mathbf{y})\\
	=&\left\lbrace x\in\mathbb{R}^n: \min_{i} d_i(x;x_{P_i},x_E) = 0\right\rbrace\\
	=&\left\lbrace x_E+\rho e:e\in\mathbb{S}^{n-1},\rho=\min_i\rho_i(x_{P_i},x_E,e)\right\rbrace.
\end{align*} 
We define a function
\begin{align*}
	V^g(\mathbf{y})\triangleq \min_{x\in{\mathcal{D}^*(\mathbf{y})}} g(x).
\end{align*}
$V^g$ is the candidate for the value function generated by geometric method.
\subsection{Goal of this paper}
In this study, we present a complete proof of that $V^g$ is a solution of \eqref{HJI} when $g$ is locally Lipschitz continuous. We do not require $g$ to be second-order continuously differentiable or convex, as in \cite{fu2023justification,lee2024solutions}. Since convex functions are locally Lipschitz continuous \cite{roberts1974another}, the cases in which the terminal costs or the targets are convex are special cases in our work. It is easy to obtain that $V^g$ satisfies the boundary condition of \eqref{HJI}. Thus, the main task is to prove that $V^g$ is a viscosity solution of the HJI PDE of \eqref{HJI}, i.e.\\
(a) $\forall \mathbf{y}\in\Omega, \forall\mathbf{p}=(p_{P_1}^\top,\dots,p_{P_m}^\top,p_E^\top)^\top \in D^+ V^g(\mathbf{y}),$
\begin{align*}
	-\sum_{i=1}^m\alpha_i\left\| {p}_{P_i}\right\| +\left\| {p}_E\right\| \le 0,
\end{align*}
which implies that $V^g$ is a viscosity subsolution of the HJI PDE of \eqref{HJI} in $\Omega$;\\
(b) $\forall \mathbf{y}\in\Omega, \forall\mathbf{p}=(p_{P_1}^\top,\dots,p_{P_m}^\top,p_E^\top)^\top\in D^- V^g(\mathbf{y}),$
\begin{align*}
	-\sum_{i=1}^m\alpha_i\left\| {p}_{P_i}\right\| +\left\| {p}_E\right\| \ge 0,
\end{align*}
which implies that $V^g$ is a viscosity supersolution of the HJI PDE of \eqref{HJI} in $\Omega$.
\section{Technical preliminaries}\label{sec_technical_preliminaries}

\subsection{An important property of dominance regions}\label{sec_key_C-oval_property}
\begin{proposition}\label{key_C-oval_property}
	Assume that $\alpha_i>1$, $l_i\ge 0$ and $x_{P_i},x_E\in\mathbb{R}^n$ satisfy $\left\| x_{P_i}-x_E\right\| >l_i$. Then $\forall x_1,x_2\in\partial\mathcal{D}_i(x_{P_i},x_E)$,
	\begin{align}\label{eq_key_C-oval_property}
		\frac{(x_1-x_{P_i})^\top(x_2-x_{P_i})}{\left\| x_1-x_{P_i}\right\| \left\| x_2-x_{P_i}\right\| }\ge \frac{(x_1-x_E)^\top(x_2-x_E)}{\left\| x_1-x_E\right\| \left\| x_2-x_E\right\| }.
	\end{align}
	The equality holds if and only if $x_1=x_2$.
\end{proposition}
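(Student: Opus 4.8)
The plan is to reduce the statement to an elementary inequality in the two scalars $r_j\triangleq\left\|x_j-x_E\right\|$, $j\in\{1,2\}$, by exploiting the defining relation of the boundary: for $x_j\in\partial\mathcal{D}_i(x_{P_i},x_E)$ we have $\left\|x_j-x_{P_i}\right\|=\alpha_i\left\|x_j-x_E\right\|+l_i=\alpha_i r_j+l_i$, and moreover $r_j>0$ (if $x_j=x_E$ then $\left\|x_{P_i}-x_E\right\|=l_i$, contradicting the hypothesis). Write $a_j\triangleq\alpha_i r_j+l_i>0$ and $D\triangleq\left\|x_1-x_2\right\|^2$. Using the polarization identity $(x_1-c)^\top(x_2-c)=\tfrac12(\left\|x_1-c\right\|^2+\left\|x_2-c\right\|^2-\left\|x_1-x_2\right\|^2)$ with $c=x_{P_i}$ and with $c=x_E$ (equivalently, the law of cosines in the triangles $x_1x_{P_i}x_2$ and $x_1x_Ex_2$), the left- and right-hand sides of \eqref{eq_key_C-oval_property} become, respectively, $\frac{a_1^2+a_2^2-D}{2a_1a_2}$ and $\frac{r_1^2+r_2^2-D}{2r_1r_2}$; geometrically this is the statement that the angular size of $\partial\mathcal{D}_i$ seen from the pursuer is no larger than that seen from the evader.

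Since $a_1a_2>0$ and $r_1r_2>0$, I would clear denominators and move all terms containing $D$ to one side, obtaining the equivalent inequality
\begin{align*}
	D\,(a_1a_2-r_1r_2)\ge (r_1^2+r_2^2)a_1a_2-(a_1^2+a_2^2)r_1r_2.
\end{align*}
Substituting $a_j=\alpha_i r_j+l_i$ and simplifying, the right-hand side factors cleanly as $(r_1-r_2)^2\bigl(\alpha_i l_i(r_1+r_2)+l_i^2\bigr)$, so the claim is equivalent to
\begin{align*}
	D\,(a_1a_2-r_1r_2)\ge (r_1-r_2)^2\bigl(\alpha_i l_i(r_1+r_2)+l_i^2\bigr),
\end{align*}
and one records the identity $a_1a_2-r_1r_2=(\alpha_i^2-1)r_1r_2+\alpha_i l_i(r_1+r_2)+l_i^2>0$.

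To finish, I would bound $D$ from below by the triangle inequality at $x_E$: $\left\|x_1-x_2\right\|\ge\bigl|\left\|x_1-x_E\right\|-\left\|x_2-x_E\right\|\bigr|=|r_1-r_2|$, hence $D\ge(r_1-r_2)^2$. Since $a_1a_2-r_1r_2>0$, this gives $D\,(a_1a_2-r_1r_2)\ge(r_1-r_2)^2(a_1a_2-r_1r_2)$, and the identity above yields $(a_1a_2-r_1r_2)-\bigl(\alpha_i l_i(r_1+r_2)+l_i^2\bigr)=(\alpha_i^2-1)r_1r_2>0$, which proves the inequality. For the equality case I would split on whether $r_1=r_2$: if $r_1\ne r_2$ the last displayed estimate is strict, so \eqref{eq_key_C-oval_property} is strict; if $r_1=r_2$ (hence $a_1=a_2$) the reduced inequality reads $D\,(a_1^2-r_1^2)\ge0$ with $a_1^2-r_1^2=(\alpha_i r_1+l_i)^2-r_1^2>0$, so equality forces $D=0$, i.e. $x_1=x_2$; the converse is immediate.

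The computations are routine; the only delicate points are (i) performing the reduction so that the $D$-free remainder factors through $(r_1-r_2)^2$, after which the crude bound $D\ge(r_1-r_2)^2$ turns out to be exactly strong enough, and (ii) handling the equality statement, which must be treated separately on the locus $r_1=r_2$ where that factor degenerates. Note that strict convexity of $\mathcal{D}_i$ (\autoref{th_C-oval_convexity}) is not needed for this argument — only the algebraic form of the boundary relation and the triangle inequality.
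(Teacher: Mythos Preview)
Your argument is correct, and it is substantially simpler than the paper's own proof. Both proofs start from the same geometric observation---the law of cosines expresses each side of \eqref{eq_key_C-oval_property} as $\frac{a_1^2+a_2^2-D}{2a_1a_2}$ and $\frac{r_1^2+r_2^2-D}{2r_1r_2}$---but from there the routes diverge completely. The paper first establishes the planar case $n=2$ by parametrizing the Cartesian oval, proving the differential estimate $\left|\tfrac{d\psi}{d\chi}\right|<1$ via explicit computation (including sign checks of two auxiliary polynomials on $[\tfrac{\sigma-l_i}{\alpha_i+1},\tfrac{\sigma-l_i}{\alpha_i-1}]$), and then lifts this to $n\ge 3$ through a spherical-coordinate reduction together with an inductive inequality on $e_1^\top e_2$. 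Your proof, by contrast, is purely algebraic and dimension-free: after clearing denominators you notice that the $D$-free remainder factors through $(r_1-r_2)^2$, which is exactly the crude lower bound the triangle inequality gives for $D$, and the surplus $(\alpha_i^2-1)r_1r_2>0$ delivers strictness whenever $r_1\ne r_2$. The equality discussion is handled cleanly in both regimes $r_1\ne r_2$ and $r_1=r_2$, and the argument also covers $l_i=0$ without modification. Your approach is shorter, avoids calculus and the separate treatment of $n=2$ versus $n\ge 3$, and makes transparent that only the boundary relation $a_j=\alpha_i r_j+l_i$ and the triangle inequality are needed; the paper's route, while correct, obscures this.
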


\begin{figure}
	\centering
	\begin{subfigure}[b]{0.49\linewidth}
		\centering
		\includegraphics[width=\linewidth]{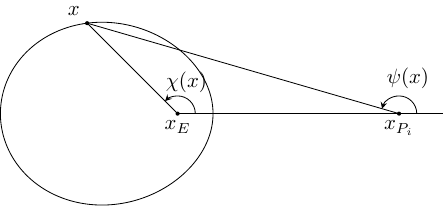}
		\caption{$\chi(x)<\pi$}
	\end{subfigure}
	\hfil
	\begin{subfigure}[b]{0.49\linewidth}
		\centering
		\includegraphics[width=\linewidth]{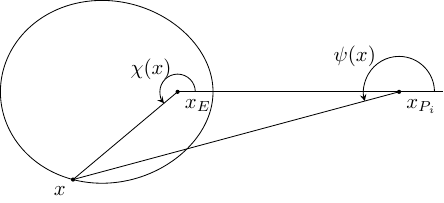}
		\caption{$\chi(x)>\pi$}
	\end{subfigure}
	\caption{$\chi(x)$ and $\psi(x)$ in the proof of \autoref{key_C-oval_property}.}
	\label{figure_chi_psi_1}
\end{figure}

\begin{proof}
	We first prove this conclusion holds when the dimension $n$ of the Euclidean space where the game happens is $2$.\\
	When $n=2$, $\partial \mathcal{D}_i(x_{P_i},x_E)$ is a Cartesian oval. Referring to Fig.~\ref{figure_chi_psi_1}, for any $x\in \partial \mathcal{D}_i(x_{P_i},x_E)$, let $\chi=\chi(x)$ denote the angle between vectors $x-x_E$ and  $x_{P_i}-x_E$ ($x_{P_i}-x_E$ is the starting edge, $x-x_E$ is the ending edge, and counterclockwise is the positive direction). Let $\psi=\psi(x)$ denote the angle between vectors $x-x_{P_i}$ and  $x_{P_i}-x_E$ ($x_{P_i}-x_E$ is the starting edge, $x-x_{P_i}$ is the ending edge, and counterclockwise is the positive direction). Without loss of generality, we assume that $\chi(x_2)-\chi(x_1)\in[0,\pi]$. If
	\begin{align}\label{c-oval_local_pro}
		\left| \frac{ d\psi(x)}{d\chi(x)}\right| < 1,
	\end{align}
	holds for any $x\in \partial \mathcal{D}_i(x_{P_i},x_E)$, we obtain
	\begin{subequations}
		\begin{align}
			&\left| \psi(x_2)-\psi(x_1)\right| \\
			=&\left| \int_{\chi(x_1)}^{\chi(x_2)} \frac{ d\psi(x)}{d\chi(x)} d\chi(x)\right| \\
			\le& \int_{\chi(x_1)}^{\chi(x_2)} \left| \frac{ d\psi(x)}{d\chi(x)} \right|d\chi(x) \\ 
			\le& \int_{\chi(x_1)}^{\chi(x_2)} d\chi(x) \label{c-equality}\\
			=& \chi(x_2)-\chi(x_1).
		\end{align}
	\end{subequations}
	Then,
	\begin{align*}
		\cos(\psi(x_2)-\psi(x_1)) \ge \cos(\chi(x_2)-\chi(x_1)).
	\end{align*}
	\eqref{eq_key_C-oval_property} holds by the geometric interpretation of the vectors' inner product. In particular, when $\chi(x_2)-\chi(x_1)\in(0,\pi]$, the equality of \eqref{c-equality} does not hold.
	We also obtain the conditions for the equality of \eqref{eq_key_C-oval_property}.
	Let us prove that \eqref{c-oval_local_pro} holds on the whole of $\partial \mathcal{D}_i(x_{P_i},x_E)$. Let
	$r_1(x)=\left\| x-x_{P_i}\right\|,
	r_2(x)=\left\| x-x_E\right\|,
	\sigma=\left\| x_{P_i}-x_E\right\|$.
	The equality $r_1=\alpha_i r_2+l_i$ holds. It is only necessary to consider the case $\chi\in[0,\pi]$ because of the symmetry of $\mathcal{D}_i$. By the parametric representation of the Cartesian oval in \cite[Lemma 1]{garcia2021cooperative2}, we obtain that $\chi$ and $r_2$ (or $r_1$) correspond one-to-one when $\chi\in[0,\pi]$ and the value range of $r_2$ is $[\frac{\sigma-l_i}{\alpha_i+1},\frac{\sigma-l_i}{\alpha_i-1}]$. By the law of cosines, 
	\begin{align*}
		\cos\chi=\frac{r_2^2+\sigma^2-r_1^2}{2r_2\sigma},\\
		\cos\psi=\frac{r_1^2+\sigma^2-r_2^2}{2r_1\sigma}.
	\end{align*}
	Treating $\cos\chi,\cos\psi, r_1$ as functions of $r_2$, we take the derivative of the above equations on both sides with respect to $r_2$:
	\begin{align*}
		\frac{d\cos\chi}{d r_2}&=\frac{r_1^2+r_2^2-2\alpha_i r_1 r_2-\sigma^2}{2\sigma r_2^2},\\
		\frac{d\cos\psi}{d r_2}&=\frac{\alpha_i r_1^2+\alpha_i r_2^2-2 r_1 r_2-\alpha_i \sigma^2}{2\sigma r_1^2}.
	\end{align*}
	By the above equations and the law of sines,
	\begin{align*}
		\frac{d\psi}{d\chi}=&\frac{\sin\chi}{\sin\psi} \cdot \frac{d\cos\psi}{d\cos\chi}\\
		=&\frac{r_1}{r_2}\cdot \frac{d\cos \psi /dr_2}{d\cos \chi /dr_2}\\
		=&\frac{\alpha_i r_1^2r_2-2r_1r_2^2+\alpha_i r_2^3-\alpha_i\sigma^2r_2}{r_1^3-2\alpha_i r_1^2r_2+r_1r_2^2-\sigma^2r_1}.
	\end{align*}
	For convenience, let
	\begin{align}
		f_1&=\alpha_i r_1^2r_2-2r_1r_2^2+\alpha_i r_2^3-\alpha_i\sigma^2r_2,\\
		f_2&=r_1^3-2\alpha_i r_1^2r_2+r_1r_2^2-\sigma^2r_1 \label{f1}.
	\end{align}
	Substituting $r_2=\frac{1}{\alpha_i}r_1-\frac{l}{\alpha_i}$ into \eqref{f1}, we obtain
	\begin{align*}
		f_2&=r_1\left[ \left(\frac{1}{\alpha_i^2}-1\right)r_1^2+2l_i\left(1-\frac{1}{\alpha_i^2}\right)r_1+\frac{l_i^2}{\alpha_i^2}-\sigma^2\right] \\
		&=-\frac{r_1}{\alpha_i^2}\left[ \left( \alpha_i^2-1\right) \left( r_1-l_i\right) ^2 +\alpha_i^2 \left( \sigma ^2- l_i^2 \right) \right]. 
	\end{align*}
	Due to $\alpha_i >1,\sigma>l_i$, we obtain $\forall r_1>0,f_2(r_1)<0$. Subsequently,
	\begin{align*}
		\left| \frac{f_1}{f_2}\right|< 1,\forall r_2\in \left[ \frac{\sigma-l_i}{\alpha_i+1},\frac{\sigma-l_i}{\alpha_i-1}\right]
	\end{align*}
	is converted into
	\begin{align}\label{converted}
		f_1-f_2> 0 \bigwedge f_1+f_2< 0 ,\forall r_2\in \left[ \frac{\sigma-l_i}{\alpha_i+1},\frac{\sigma-l_i}{\alpha_i-1}\right].
	\end{align}
	Let $g_1=f_1+f_2,\:g_2=f_1-f_2$. Substitute $r_1=\alpha_i r_2+l_i$ into $g_1,g_2$ and simplify them:
	\begin{align*}
		g_1(r_2)=l_i(\alpha_i^2-1)r_2^2-2\alpha_i (\sigma^2-l_i^2)r_2-l_i(\sigma^2-l_i^2),\\
		g_2(r_2)=2(\alpha_i^3-\alpha_i)r_2^3+3l_i(\alpha_i^2-1)r_2^2+l_i(\sigma^2-l_i^2).
	\end{align*}
	Consider $g_1$,
	\begin{align*}
		&g_1\left( \frac{\sigma-l_i}{\alpha_i+1}\right)\\ =&\frac{-((\alpha_i+1)l_i+2\alpha_i\sigma)(\sigma-l_i)-l_i(\alpha_i+1)(\sigma+l_i)}{(\alpha_i+1)/(\sigma-l_i)}<0,\\
		&g_1\left( \frac{\sigma-l_i}{\alpha_i-1}\right)\\ =&\frac{-((\alpha_i-1)l_i+2\alpha_i\sigma)(\sigma-l_i)-l_i(\alpha_i-1)(\sigma+l_i)}{(\alpha_i-1)/(\sigma-l_i)}<0.
	\end{align*}
	$g_1(r_2)$ is a quadratic function with a positive quadratic coefficient. Thus, it is a convex function. Then, $g_1(r_2)<0,\forall r_2\in \left[ \frac{\sigma-l_i}{\alpha_i+1},\frac{\sigma-l_i}{\alpha_i-1}\right]$. The cubic coefficient of $g_2(r_2)$ is positive, and the other coefficients of $g_2(r_2)$ are non-negative. Thus,  $g_2(r_2)>0,\forall r_2\in \left[ \frac{\sigma-l_i}{\alpha_i+1},\frac{\sigma-l_i}{\alpha_i-1}\right]$. \eqref{converted} holds. We complete the proof of the case $n=2$. \\
	Next, we consider the case $n\ge3$.\\
	Select $x_1,x_2 \in \partial \mathcal{D}_i(x_{P_i},x_E)$. Let
	\begin{align}\label{pro1}
		e_k=\frac{x_k-x_E}{\left\| x_k-x_E\right\| },\ k=1,2.
	\end{align}
	For convenience, let
	\begin{align*}
		\rho_i^{(k)}=\rho_i(x_{P_i},x_E,e_k),\ k=1,2.
	\end{align*}
	According to Subsection \ref{subsec_Dominance region}, we have
	\begin{equation}\label{pro2}
		\begin{split}
			&x_k=x_E+\rho_i^{(k)}e_k,\ k=1,2,\\
			&\left\| x_k-x_E\right\|=\rho_i^{(k)},\ k=1,2,\\
			&\left\| x_k-x_{P_i}\right\|=\alpha_i\rho_i^{(k)}+l_i,\ k=1,2.
		\end{split}
	\end{equation}
	Establish a new Cartesian coordinate system in $\mathbb{R}^n$ with the origin of the original coordinate system as the new origin and the vector $x_{P_i}-x_E$ as the positive direction of the new first coordinate axis.
	The coordinate of $x_{P_i}-x_E$ in the new system is $(\left\|x_{P_i}-x_E \right\|,0,\dots,0)^\top$. By the transformation between Cartesian coordinate system and spherical coordinate system \cite{blumenson1960derivation}, $e_1,e_2$ can be parameterized in the new coordinate system as follows,
	\begin{equation}\label{e1e2_parametric_representation}
		\begin{split}
			e_1\mapsto 
			\begin{pmatrix}
				\cos \eta_1\\
				\sin \eta_1 \cos \eta_2\\
				\sin \eta_1 \sin \eta_2 \cos \eta_3\\
				\vdots\\
				\sin \eta_1 \dots \sin \eta_{n-2} \cos\eta_{n-1}\\
				\sin \eta_1 \dots \sin \eta_{n-2} \sin\eta_{n-1}\\
			\end{pmatrix},
			\\
			e_2\mapsto 
			\begin{pmatrix}
				\cos \zeta_1\\
				\sin \zeta_1 \cos \zeta_2\\
				\sin \zeta_1 \sin \zeta_2 \cos \zeta_3\\
				\vdots\\
				\sin \zeta_1 \dots \sin \zeta_{n-2} \cos\zeta_{n-1}\\
				\sin \zeta_1 \dots \sin \zeta_{n-2} \sin\zeta_{n-1}\\
			\end{pmatrix}.
		\end{split}
	\end{equation}
	where $\eta_j,\zeta_j \in [0,\pi], \forall j\in [n-2]$ and $\eta_{n-1},\zeta_{n-1} \in [0,2\pi)$. Then, we have
	\begin{align}\label{pro3}
		e_1^\top(x_{P_i}-x_E)=\left\|x_{P_i}-x_E \right\|\cos\eta_1,\\
		e_2^\top(x_{P_i}-x_E)=\left\|x_{P_i}-x_E \right\|\cos\zeta_1.
	\end{align}
	Let
	\begin{align*}
		&\bar{x}_{P_i}=(\left\| x_{P_i}-x_E\right\| ,0)^\top,\\
		&\bar{x}_E=(0,0)^\top,\\
		&\bar{e}_1=(\cos\eta_1,\sin\eta_1)^\top,\\
		&\bar{e}_2=(\cos\zeta_1,\sin\zeta_1)^\top,
	\end{align*}
	It is easy to obtain that
	\begin{equation}\label{pro4}
		\begin{split}
			&\left\| \bar{x}_{P_i}-\bar{x}_E\right\| =\left\| x_{P_i}-x_E\right\|,\\
			&\bar{e}_k^\top (\bar{x}_{P_i}-\bar{x}_E)=e_k^\top (x_{P_i}-x_E),\ k=1,2.
		\end{split}
	\end{equation}
	Let
	\begin{subequations}
		\begin{align}
			&\bar{\rho}_i^{(k)}=\rho_i(\bar{x}_{P_i},\bar{x}_E,\bar{e}_k),\ k=1,2,\\
			&\bar{x}_k=\bar{x}_E+\bar{\rho}_i^{(k)}\bar{e}_k,\ k=1,2.\label{pro5}
		\end{align}
	\end{subequations}
	According to Subsection \ref{subsec_Dominance region}, we have $\bar{x}_1,\bar{x}_2\in\partial\mathcal{D}_i(\bar{x}_{P_i},\bar{x}_E)$ and 
	\begin{equation}\label{pro6}
		\begin{split}
			&\left\| \bar{x}_k-\bar{x}_E\right\|=\bar{\rho}_i^{(k)},\ k=1,2,\\
			&\left\| \bar{x}_k-\bar{x}_{P_i}\right\|=\alpha_i\bar{\rho}_i^{(k)}+l_i,\ k=1,2.
		\end{split}
	\end{equation}
	According to the case $n=2$ of \autoref{key_C-oval_property} (which we have proven earlier), we have
	\begin{align}\label{pro7}
		\frac{(\bar{x}_1-\bar{x}_{P_i})^\top(\bar{x}_2-\bar{x}_{P_i})}{\left\| \bar{x}_1-\bar{x}_{P_i}\right\| \left\| \bar{x}_2-\bar{x}_{P_i}\right\| }\ge \frac{(\bar{x}_1-\bar{x}_E)^\top(\bar{x}_2-\bar{x}_E)}{\left\| \bar{x}_1-\bar{x}_E\right\| \left\| \bar{x}_2-\bar{x}_E\right\| }.
	\end{align}
	Substituting \eqref{pro5}\eqref{pro6} into \eqref{pro7}, we obtain
	\begin{equation}\label{pro8}
		\begin{split}
			\frac{\left( \bar{x}_E+\bar{\rho}_i^{(1)}\bar{e}_1-\bar{x}_{P_i}\right) ^\top\left( \bar{x}_E+\bar{\rho}_i^{(2)}\bar{e}_2-\bar{x}_{P_i}\right) }
			{\left( \alpha_i\bar{\rho}_i^{(1)}+l_i\right) \left( \alpha_i\bar{\rho}_i^{(2)}+l_i\right) }
			\ge \bar{e}_1^\top \bar{e}_2.
		\end{split}
	\end{equation}
	By transforming and reorganizing the inequality of \eqref{pro8}, we obtain  
	\begin{equation}\label{pro9}
		\begin{split}
			&\frac{(\bar{x}_{P_i}-\bar{x}_E)^2-\bar{\rho}_i^{(1)}\bar{e}_1^\top(\bar{x}_{P_i}-\bar{x}_E)-\bar{\rho}_i^{(2)}\bar{e}_2^\top(\bar{x}_{P_i}-\bar{x}_E)}
			{\left( \alpha_i\bar{\rho}_i^{(1)}+l_i\right) \left( \alpha_i\bar{\rho}_i^{(2)}+l_i\right)-\bar{\rho}_i^{(1)}\bar{\rho}_i^{(2)}}\\
			&\ge \bar{e}_1^\top \bar{e}_2.
		\end{split}
	\end{equation}
	Consider $e_1^\top e_2$,
	\begin{align*}
		&e_1^\top e_2\\
		=&\cos \eta_1 \cos\zeta_1 
		+\sum_{k=2}^{n-1} \left( \prod_{j=1}^{k-1}\left( \sin\eta_j \sin \zeta_j\right) \right) \cos \eta_k \cos \zeta_k \\
		&+\prod_{j=1}^{n-1}\left( \sin\eta_j \sin \zeta_j\right) \\
		=&\cos \eta_1 \cos\zeta_1 
		+\sum_{k=2}^{n-2} \left( \prod_{j=1}^{k-1}\left( \sin\eta_j \sin \zeta_j\right) \right) \cos \eta_k \cos \zeta_k \\
		&+\prod_{j=1}^{n-2}\left( \sin\eta_j \sin \zeta_j\right) \cos(\eta_{n-1}-\zeta_{n-1}) \\
		\le &\cos \eta_1 \cos\zeta_1 +\sum_{k=2}^{n-2} \left( \prod_{j=1}^{k-1}\left( \sin\eta_j \sin \zeta_j\right) \right) \cos \eta_k \cos \zeta_k \\ &+\prod_{j=1}^{n-2}\left( \sin\eta_j \sin \zeta_j\right) \\
		&\cdots\cdots \\
		\le &\cos \eta_1 \cos\zeta_1 
		+ \sin\eta_1 \sin \zeta_1  \cos \eta_2 \cos \zeta_2 \\
		&+ \sin\eta_1 \sin \zeta_1  \sin \eta_2 \sin \zeta_2 \\
		=&\cos \eta_1 \cos\zeta_1 
		+ \sin\eta_1 \sin \zeta_1 \cos(\eta_2-\zeta_2) \\
		\le & \cos \eta_1 \cos\zeta_1 
		+ \sin\eta_1 \sin \zeta_1 \\
		=& \bar{e}_1^\top\bar{e}_2,
	\end{align*}
	By above inequality, we have
	\begin{align}\label{pro10}
		e_1^\top e_2\le \bar{e}_1^\top\bar{e}_2.
	\end{align}
	The equality holds if and only if $\eta_1 \zeta_1=0$ or the vector
	\begin{align*}
		\begin{pmatrix}
			\cos \eta_2\\
			\sin \eta_2 \cos \eta_3\\
			\sin \eta_2 \sin \eta_3 \cos \eta_4\\
			\vdots\\
			\sin \eta_2 \dots \sin \eta_{n-2} \cos\eta_{n-1}\\
			\sin \eta_2 \dots \sin \eta_{n-2} \sin\eta_{n-1}\\
		\end{pmatrix}
	\end{align*}
	is equal to the vector 
	\begin{align*}
		\begin{pmatrix}
			\cos \zeta_2\\
			\sin \zeta_2 \cos \zeta_3\\
			\sin \zeta_2 \sin \zeta_3 \cos \zeta_4\\
			\vdots\\
			\sin \zeta_2 \dots \sin \zeta_{n-2} \cos\zeta_{n-1}\\
			\sin \zeta_2 \dots \sin \zeta_{n-2} \sin\zeta_{n-1}\\
		\end{pmatrix}.
	\end{align*}

	Recalling \eqref{rho}, we obtain that $\rho_i(x_{P_i},x_E,e)$ depends only on $e^\top (x_{P_i}-x_E)$ and $\left\| x_{P_i}-x_E\right\| $ for $x_{P_i}$, $x_E$ and $e$. Thus, by \eqref{pro4} 
	\begin{align}\label{pro11}
		\bar{\rho}_i^{(k)}=\rho_i^{(k)},\ k=1,2.
	\end{align}
	
	Substituting \eqref{pro4}\eqref{pro10}\eqref{pro11} into \eqref{pro9}, we have
	\begin{equation}\label{pro12}
		\begin{split}
			&\frac{(x_{P_i}-x_E)^2-\rho_i^{(1)}e_1^\top(x_{P_i}-x_E)-\rho_i^{(2)}e_2^\top(x_{P_i}-x_E)}
			{\left( \alpha_i\rho_i^{(1)}+l_i\right) \left( \alpha_i\rho_i^{(2)}+l_i\right)-\rho_i^{(1)}\rho_i^{(2)}}\\
			&\ge e_1^\top e_2.
		\end{split}
	\end{equation}
	By transforming and reorganizing the inequality of \eqref{pro12}, we obtain
	\begin{equation}\label{pro13}
		\begin{split}
			\frac{\left( x_E+\rho_i^{(1)}e_1-x_{P_i}\right) ^\top\left( x_E+\rho_i^{(2)}e_2-x_{P_i}\right) }
			{\left( \alpha_i\rho_i^{(1)}+l_i\right) \left( \alpha_i\rho_i^{(2)}+l_i\right) }
			\ge e_1^\top e_2.
		\end{split}
	\end{equation}
	Substituting \eqref{pro1}\eqref{pro2} into \eqref{pro13}, we have
	\begin{align}\label{pro14}
		\frac{(x_1-x_{P_i})^\top(x_2-x_{P_i})}{\left\| x_1-x_{P_i}\right\| \left\| x_2-x_{P_i}\right\| }\ge \frac{(x_1-x_E)^\top(x_2-x_E)}{\left\| x_1-x_E\right\| \left\| x_2-x_E\right\| }.
	\end{align}
	If the equality of \eqref{pro14} holds, according to the equality conditions of \eqref{pro8} and \eqref{pro10}, $e_1=e_2$. This implies $x_1=x_2$. The proof is completed.  
\end{proof}
\subsection{Pursuit strategies generated by dominance regions}\label{sec_pursuit_strategies}
This part aims to present \autoref{th_epsilon_lemma} which is used in the proof of \autoref{th_super}. Further discussion on the pursuit strategy is presented in Section \ref{sec_optimal_strategies}.

First, we present a pursuit strategy generated by $\partial\mathcal{D}_i(x_{P_i},x_E)$ for the $i$th pursuer. For this strategy, the pursuer decides the current control input by knowing current states and the evader’s current control input.
\par
Given the current position $x_{P_i},x_E\in\mathbb{R}^n$ and the evader's current input $u_E\in B(0,1)$, assume that $\left\| x_{P_i}-x_E\right\| > l_i $. The current input for the $i$th pursuer is determined as follows,
\begin{equation}
	\begin{aligned}\label{eq_prusuer_current_input}
		&u_{P_i}=\gamma_{P_i}(x_{P_i},x_E,u_E)\\
		\triangleq& 
		\begin{cases}
			\frac{x_E+\rho_i\left( x_{P_i},x_E,\frac{u_E}{\left\| u_E\right\| }\right) \frac{u_E}{\left\| u_E\right\| }-x_{P_i}}{\left\| x_E+\rho_i\left( x_{P_i},x_E,\frac{u_E}{\left\| u_E\right\| }\right) \frac{u_E}{\left\| u_E\right\| }-x_{P_i}\right\| } & 0<\left\| u_E\right\| \le1\\
			\frac{x_E-x_{P_i}}{\left\| x_E-x_{P_i}\right\| } & u_E=0
		\end{cases}.
	\end{aligned}
\end{equation}
When $0<\left\| u_E\right\| \le1$, the $i$th pursuer moves toward the point on $\partial\mathcal{D}_i(x_{P_i},x_E)$ toward which the evader moves. When $u_E= 0$, the $i$th pursuer moves toward the evader.
\begin{proposition}\label{th_pursuit_strategy}
	Let $\alpha_i>1,\ l_i\ge 0$. Assume that $x_{P_i}(t),x_E(t)$ are piecewise smooth in $[a,b)$ and satisfy
	\begin{align*}
		\left\| x_{P_i}(t)-x_E(t)\right\| > l_i ,\\
		\left\| \dot{x}_{E}(t) \right\|  \le 1,
	\end{align*} 
	and
	\begin{equation}\label{eq_ode_P-strategy}
		\begin{split}
			\dot{x}_{P_i}(t)&=\alpha_i\gamma_{P_i}(x_{P_i}(t),x_E(t),u_E(t)),\\
			\dot{x}_{E}(t)&=u_E(t),
		\end{split}
	\end{equation}
	for any $t\in[a,b)$. Then,\\
	(a) for any $t\in[a,b)$, $\frac{d}{dt}\left\| x_{P_i}(t)-x_E(t)\right\| < 1-\alpha_i$,\\
	(b) for any $t_1 \in[a,b)$, for any $t_2 \in (t_1,b)$, $\mathcal{D}_i(x_{P_i}(t_2),x_E(t_2)) \subseteq \mathcal{D}_i(x_{P_i}(t_1),x_E(t_1))$.
\end{proposition}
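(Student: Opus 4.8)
The plan is to obtain both parts by differentiating the relevant distance functions along the given trajectory on each smooth piece and substituting the closed form \eqref{eq_prusuer_current_input} of $\gamma_{P_i}$; for part (a) this is a short computation, and for part (b) the decisive structural inputs are \autoref{key_C-oval_property} and the strict convexity of \autoref{th_C-oval_convexity}.

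\textbf{Part (a).} Write $r(t)=\|x_{P_i}(t)-x_E(t)\|$ and $e=(x_E(t)-x_{P_i}(t))/r(t)$, so that $\dot r=e^\top u_E-\alpha_i\,e^\top u_{P_i}$. When $u_E=0$ the law gives $u_{P_i}=e$ and $\dot r=-\alpha_i<1-\alpha_i$. When $0<\|u_E\|\le1$, set $\hat e=u_E/\|u_E\|$ and $z=x_E+\rho_i(x_{P_i},x_E,\hat e)\hat e\in\partial\mathcal{D}_i(x_{P_i},x_E)$; then $u_{P_i}=(z-x_{P_i})/\|z-x_{P_i}\|$, $\|z-x_{P_i}\|=\alpha_i\rho_i+l_i$, and $z-x_{P_i}=\rho_i\hat e+r\,e$, whence $e^\top u_{P_i}=(\rho_i(e^\top\hat e)+r)/(\alpha_i\rho_i+l_i)$ and $e^\top u_E=\|u_E\|\,e^\top\hat e$. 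Inserting these and clearing the positive factor $\alpha_i\rho_i+l_i$ turns the target inequality into an elementary one that is affine in $\|u_E\|\in[0,1]$ and in $e^\top\hat e\in[-1,1]$; one maximises over these two parameters, splits on the sign of $e^\top\hat e$, and closes the estimate using the bound $\rho_i\le(r-l_i)/(\alpha_i-1)$ on the range of $\rho_i$ established in the proof of \autoref{key_C-oval_property} (or re-derived from \eqref{rho}). This gives the estimate in (a); it is attained with equality precisely when the evader flees at unit speed directly away from the pursuer.

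\textbf{Part (b).} First reduce to the boundary: each $\mathcal{D}_i(x_{P_i}(t),x_E(t))$ is compact (\autoref{th C-oval boundedness}) and strictly convex (\autoref{th_C-oval_convexity}) with nonempty interior, hence equals the convex hull of its boundary, so it suffices to prove $\partial\mathcal{D}_i(x_{P_i}(t_2),x_E(t_2))\subseteq\mathcal{D}_i(x_{P_i}(t_1),x_E(t_1))$. Fix $x$ in the first set and let $h(t)=d_i(x;x_{P_i}(t),x_E(t))$, so $h(t_2)=0$ and we want $h(t_1)\le0$. The crux is: whenever $h(s)=0$, i.e.\ $x\in\partial\mathcal{D}_i(x_{P_i}(s),x_E(s))$, one has $\dot h(s)\ge0$. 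Differentiating, $\dot h(s)=\alpha_i\big((x-x_{P_i})^\top u_{P_i}/\|x-x_{P_i}\|-(x-x_E)^\top u_E/\|x-x_E\|\big)$; for $u_E\neq0$, substituting $u_{P_i}=(z-x_{P_i})/\|z-x_{P_i}\|$ and $u_E=\|u_E\|(z-x_E)/\|z-x_E\|$ with $z=x_E+\rho_i\hat e\in\partial\mathcal{D}_i(x_{P_i}(s),x_E(s))$ reduces $\dot h(s)\ge0$ to $\frac{(x-x_{P_i})^\top(z-x_{P_i})}{\|x-x_{P_i}\|\,\|z-x_{P_i}\|}\ge\|u_E\|\frac{(x-x_E)^\top(z-x_E)}{\|x-x_E\|\,\|z-x_E\|}$, and since $x,z\in\partial\mathcal{D}_i(x_{P_i}(s),x_E(s))$, \autoref{key_C-oval_property} supplies $\frac{(x-x_{P_i})^\top(z-x_{P_i})}{\|x-x_{P_i}\|\|z-x_{P_i}\|}\ge\frac{(x-x_E)^\top(z-x_E)}{\|x-x_E\|\|z-x_E\|}$, which delivers the inequality when the common right side is nonnegative and, in general, when the evader uses full speed. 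The case $u_E=0$ is handled directly: the pursuer heads straight at the evader, and the law of cosines combined with $\|x-x_{P_i}\|\ge\alpha_i\|x-x_E\|$ for $x\in\mathcal{D}_i$ forces $(x-x_{P_i})^\top(x_E-x_{P_i})>0$, so $\dot h(s)>0$.

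Granting the estimate, the inclusion follows by a comparison argument: if $h(t_1)>0$ while $h(t_2)\le0$, set $s=\inf\{t\in[t_1,t_2]:h(t)\le0\}$; by continuity $s>t_1$, $h(s)=0$, and $h>0$ on $[t_1,s)$, so the left derivative of $h$ at $s$ is $\le0$, contradicting $\dot h(s)\ge0$ unless $\dot h(s)=0$ — but then the equality clause of \autoref{key_C-oval_property} forces $x=z$, i.e.\ $x$ is the very point the evader is driving toward, so $h\equiv0$ on that piece and $h(t_1)=0\le0$ anyway. Piecewise smoothness and trajectory breakpoints are absorbed by running this on each smooth sub-interval and chaining the inclusions, the dominance region depending continuously on the states. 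The step I expect to be the main obstacle is exactly this boundary derivative estimate: \autoref{key_C-oval_property} applies transparently when the evader moves at maximal speed or when the relevant angle at $x_E$ is non-obtuse, but handling $(x-x_E)^\top(z-x_E)<0$ with $\|u_E\|<1$, and cleanly disposing of the degenerate equality case $\dot h(s)=0$, is where the real care is needed; part (a) is routine once the range of $\rho_i$ is in hand, its only delicacy being the same tight straight-flight configuration.
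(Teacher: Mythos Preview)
Your instinct that the case $(x-x_E)^\top(z-x_E)<0$ with $\|u_E\|<1$ is the real obstacle in part (b) is exactly right, and it is not a mere technicality: the derivative estimate $\dot h(s)\ge 0$ genuinely fails there, and so does the conclusion of the proposition. Take $n=2$, $\alpha_i=1.1$, $l_i=0$, $x_{P_i}=(0,0)$, $x_E=(1,0)$, and $u_E=(0,-\tfrac12)$. Then $\hat e=(0,-1)$, $\rho_i=1/\sqrt{\alpha_i^2-1}\approx 2.182$, and $z=(1,-\rho_i)\in\partial\mathcal{D}_i$. The reflected point $x=(1,\rho_i)$ also lies on $\partial\mathcal{D}_i$, and one computes $A=\frac{(x-x_{P_i})^\top(z-x_{P_i})}{\|x-x_{P_i}\|\|z-x_{P_i}\|}=\frac{1-\rho_i^2}{1+\rho_i^2}\approx-0.65$ and $B=-1$, so $\dot h(s)=\alpha_i(A-\|u_E\|B)\approx 1.1\cdot(-0.15)<0$. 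A direct check then shows that for small $\epsilon>0$ the point $x$ lies in $\mathcal{D}_i(x_{P_i}(\epsilon),x_E(\epsilon))^\circ$ while $x\in\partial\mathcal{D}_i(x_{P_i}(0),x_E(0))$; since interiors are monotone under inclusion, this forces $\mathcal{D}_i(\epsilon)\not\subseteq\mathcal{D}_i(0)$. So for this strategy $\gamma_{P_i}$ the dominance region can expand when the evader moves at reduced speed and $\alpha_i$ is close to $1$; part (b) as stated does not hold in that regime, and no amount of extra care in your comparison argument will close the gap.

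Your approach \emph{does} go through cleanly when $\|u_E(t)\|\equiv 1$ (and when $u_E=0$, which you already handled): then $\dot h(s)=\alpha_i(A-B)\ge0$ by \autoref{key_C-oval_property}, with equality only if $x=z(s)$, and that is precisely the regime actually used downstream (the proof of \autoref{th_epsilon_lemma} sets $\|\dot x_E\|=1$). Two residual points to tighten: first, your equality-case disposal ``$x=z\Rightarrow h\equiv 0$ on that piece'' needs justification, since $z(t)$ depends on $\hat e(t)$, which may vary; the clean fix is to note that on a smooth piece $\hat e$ is continuous, and if $\hat e$ is constant near $s$ then Corollary~3.1(a) gives $h\equiv 0$ on a left neighbourhood, contradicting $h>0$ there, while if $\hat e$ varies one can perturb the crossing time to land at a point where $x\neq z$ and get the strict inequality. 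Second, for part (a) your own remark that equality is attained when the evader flees at unit speed directly away from the pursuer contradicts the \emph{strict} inequality in the statement; the correct bound is $\frac{d}{dt}\|x_{P_i}-x_E\|\le 1-\alpha_i$, which your computation does establish.
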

In \cite{dutkevich1972games}, the authors obtained a similar result when the game takes place in a Euclidean plane. We present a more general proof in the supplementary material. Below is a corollary of \autoref{th_pursuit_strategy}.
\begin{corollary}\label{th_epsilon_lemma}
	Let $\alpha_i>1,\ l_i\ge 0$. Assume that $x_{P_i},x_{E}\in \mathbb{R}^n$ satisfy that $\left\| x_{P_i}-x_{E}\right\| > l_i$. Then, for any $\hat{x}\in \partial\mathcal{D}_i(x_{P_i},x_{E})$, for any  $\epsilon\in[0,\left\|\hat{x}-x_E \right\| )$,\\
	(a) $\hat{x}\in\partial\mathcal{D}_i\left(x_{P_i}+\epsilon\alpha_i\frac{\hat{x}-x_{P_i}}{\left\|\hat{x}-x_{P_i} \right\| },x_{E}+\epsilon\frac{\hat{x}-x_E}{\left\|\hat{x}-x_E \right\| }\right)$,\\
	(b) $\mathcal{D}_i \left(x_{P_i}+\epsilon\alpha_i\frac{\hat{x}-x_{P_i}}{\left\|\hat{x}-x_{P_i} \right\| },x_{E}+\epsilon\frac{\hat{x}-x_E}{\left\|\hat{x}-x_E \right\| }\right) \subseteq \mathcal{D}_i(x_{P_i},x_{E}) $.
\end{corollary}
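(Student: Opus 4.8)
The plan is to exhibit the two displaced configurations appearing in parts (a) and (b) as the time-$\epsilon$ snapshot of a single trajectory: the one in which the evader runs in a straight line toward $\hat x$ at unit speed while the $i$th pursuer answers with the feedback strategy $\gamma_{P_i}$ of \eqref{eq_prusuer_current_input}. Then \autoref{th_pursuit_strategy} does the rest, and only a short self-contained computation is needed at the start.

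Concretely, I would set $\rho^\ast\triangleq\|\hat x-x_E\|$, $e_E\triangleq(\hat x-x_E)/\rho^\ast$ and $e_P\triangleq(\hat x-x_{P_i})/\|\hat x-x_{P_i}\|$; since $\hat x\in\partial\mathcal{D}_i(x_{P_i},x_E)$ we have $d_i(\hat x;x_{P_i},x_E)=0$, i.e. $\|\hat x-x_{P_i}\|=\alpha_i\rho^\ast+l_i$. Define the affine curves $x_E(t)\triangleq x_E+te_E$ and $x_{P_i}(t)\triangleq x_{P_i}+t\alpha_i e_P$ for $t\in[0,\rho^\ast)$. The first (and essentially the only) computation is to check directly that $\hat x\in\partial\mathcal{D}_i(x_{P_i}(t),x_E(t))$ for every $t\in[0,\rho^\ast)$: from $\hat x-x_E(t)=(\rho^\ast-t)e_E$ and $\hat x-x_{P_i}(t)=(\alpha_i\rho^\ast+l_i-\alpha_i t)e_P$, both coefficients being strictly positive when $t<\rho^\ast$, one gets $\|\hat x-x_E(t)\|=\rho^\ast-t$ and $\|\hat x-x_{P_i}(t)\|=\alpha_i(\rho^\ast-t)+l_i$, hence $d_i(\hat x;x_{P_i}(t),x_E(t))=0$. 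Evaluated at $t=\epsilon$ this is exactly statement (a) (the case $\epsilon=0$ being the hypothesis).

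It then remains to verify the hypotheses of \autoref{th_pursuit_strategy} on $[0,\rho^\ast)$. The curves are smooth and $\|\dot x_E(t)\|=\|e_E\|=1$. The separation stays admissible: by the reverse triangle inequality and the norms just computed, $\|x_{P_i}(t)-x_E(t)\|\ge\big|\,\|\hat x-x_{P_i}(t)\|-\|\hat x-x_E(t)\|\,\big|=(\alpha_i-1)(\rho^\ast-t)+l_i>l_i$ for $t<\rho^\ast$. Finally, the pair solves the feedback ODE \eqref{eq_ode_P-strategy} with $u_E(t)\equiv e_E$: the evader equation is immediate, and for the pursuer one observes that $\hat x$ lies on the ray from $x_E(t)$ along $e_E$ at distance $\rho^\ast-t>0$ and on $\partial\mathcal{D}_i(x_{P_i}(t),x_E(t))$, so by the bijectivity in \eqref{sphere_homeomorphism} it must equal $x_E(t)+\rho_i(x_{P_i}(t),x_E(t),e_E)e_E$; plugging this into \eqref{eq_prusuer_current_input} yields $\gamma_{P_i}(x_{P_i}(t),x_E(t),e_E)=(\hat x-x_{P_i}(t))/\|\hat x-x_{P_i}(t)\|=e_P=\dot x_{P_i}(t)/\alpha_i$. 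Hence \autoref{th_pursuit_strategy}(b) applies, and taking $t_1=0$, $t_2=\epsilon$ gives $\mathcal{D}_i(x_{P_i}(\epsilon),x_E(\epsilon))\subseteq\mathcal{D}_i(x_{P_i},x_E)$, which is (b) after substituting $x_{P_i}(\epsilon)=x_{P_i}+\epsilon\alpha_i\frac{\hat x-x_{P_i}}{\|\hat x-x_{P_i}\|}$ and $x_E(\epsilon)=x_E+\epsilon\frac{\hat x-x_E}{\|\hat x-x_E\|}$.

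I do not anticipate a genuine obstacle; the one point deserving care is the self-consistency check that these affine curves really solve the feedback ODE \eqref{eq_ode_P-strategy}, which hinges entirely on the invariance $\hat x\in\partial\mathcal{D}_i(x_{P_i}(t),x_E(t))$ established at the outset together with the uniqueness of the radial parametrization \eqref{sphere_homeomorphism} pinning down $\gamma_{P_i}$ unambiguously. One should also track that the trajectory never leaves $\{\,\|x_{P_i}-x_E\|>l_i\,\}$ on the half-open interval $[0,\rho^\ast)$ — which the triangle-inequality bound above guarantees — so that both $\gamma_{P_i}$ and \autoref{th_pursuit_strategy} stay applicable throughout.
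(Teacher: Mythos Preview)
Your proposal is correct and follows essentially the same route as the paper: prove (a) by the direct cancellation $-(\|\hat x-x_{P_i}\|-\epsilon\alpha_i)+\alpha_i(\|\hat x-x_E\|-\epsilon)+l_i=0$, and prove (b) by realising the displaced positions as the time-$\epsilon$ values of the trajectory in which the evader runs straight toward $\hat x$ while the pursuer responds with $\gamma_{P_i}$, then invoking \autoref{th_pursuit_strategy}(b). The paper writes the feedback ODE with $u_E(t)=\tfrac{\hat x-x_E(t)}{\|\hat x-x_E(t)\|}$ and asserts the solution is the affine pair; you instead posit the affine pair and verify it solves the ODE (with the equivalent constant input $u_E\equiv e_E$), supplying the separation check $\|x_{P_i}(t)-x_E(t)\|>l_i$ and the identification of $\gamma_{P_i}$ via \eqref{sphere_homeomorphism} that the paper leaves implicit.
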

\begin{proof}
	(a) By the conditions, we have
	\begin{align*}
		&-\left\| \hat{x}-\left( x_{P_i}+\epsilon\alpha_i\frac{\hat{x}-x_{P_i}}{\left\|\hat{x}-x_{P_i} \right\| }\right)\right\|  \\
		&+ \alpha_i\left\|  \hat{x}-\left( x_{E}+\epsilon\frac{\hat{x}-x_E}{\left\|\hat{x}-x_E \right\| }\right) \right\|+l_i \\
		=&-\left( \left\| \hat{x}- x_{P_i}\right\| - \epsilon\alpha_i\right) + \alpha_i \left(\left\| \hat{x}- x_E\right\| - \epsilon\right)  + l_i\\
		=&-\left\| \hat{x}- x_{P_i}\right\|+\alpha_i \left\| \hat{x}- x_E\right\| + l_i\\
		=&0.
	\end{align*}
	Thus,  $\hat{x}\in\partial\mathcal{D}_i\left(x_{P_i}+\epsilon\alpha_i\frac{\hat{x}-x_{P_i}}{\left\|\hat{x}-x_{P_i} \right\| },x_{E}+\epsilon\frac{\hat{x}-x_E}{\left\|\hat{x}-x_E \right\| }\right)$.\\
	(b) Let $x_{P_i}(t),x_E(t)$ satisfy
	\begin{align*}
		\dot{x}_{P_i}(t)&=\alpha_i\gamma_{P_i}\left( x_{P_i}(t),x_E(t),\frac{\hat{x}-x_E(t)}{\left\|\hat{x}-x_E(t))\right\| }\right) ,\\
		\dot{x}_{E}(t)&=\frac{\hat{x}-x_E(t)}{\left\|\hat{x}-x_E(t))\right\| },\\
		x_{P_i}(0)&=x_{P_i},\\
		x_E(0)&=x_E.
	\end{align*}
	It is easy to obtain that 
	\begin{align*}
		x_{P_i}(\epsilon)&=x_{P_i}+\epsilon\alpha_i\frac{\hat{x}-x_{P_i}}{\left\|\hat{x}-x_{P_i} \right\| }, ,\\
		x_E(\epsilon)&=x_{E}+\epsilon\frac{\hat{x}-x_E}{\left\|\hat{x}-x_E\right\| },
	\end{align*}
	when  $\epsilon\in[0,\left\|\hat{x}-x_E \right\| )$. From \autoref{th_pursuit_strategy}(b), we obtain the conclusion.
\end{proof}

\subsection{Differential properties of the marginal function in mathematical program}
In this part, we introduce some existing literature’s results about marginal functions and Clark’s generalized gradients. Consider a mathematical program problem involving parameters:
\begin{equation}\label{eq_prameteric_mp}
	\begin{split}
		\min_{x} &f_0(x,v),\ x\in\mathbb{R}^n,v\in\mathbb{R}^m\\
		\text{s.t. } & f_i(x,v)\le 0, i=1,\dots,s,
	\end{split}\tag{$\mathbf{P}_v$}
\end{equation}
where the vector $v$ is seen as a parameter vector. We assume that $f_i$ is locally Lipschitz continuous on $\mathbb{R}^n\times\mathbb{R}^m$. For each value of $v$, the set of feasible solution is
\begin{align*}
	S(v)\triangleq\left\lbrace x:f_i(x,v)\le 0, i=1,\dots,s\right\rbrace. 
\end{align*}
The marginal function is the optimal value of the program for each value of $v$:
\begin{align*}
	q(v)\triangleq\min \left\lbrace f_0(x,v):x\in S(v)\right\rbrace. 
\end{align*}
The set of optimal solution is
\begin{align*}
	P(v)\triangleq\left\lbrace x\in S(v): f_0(x,v)=q(v) \right\rbrace .
\end{align*}
Let $\lambda=(\lambda_1,\dots,\lambda_s)^\top$.
Let $K(x,v)$ be the set of $(\lambda,z)$ satisfying
\begin{equation}\label{eq_def_K}
	\begin{split}
		\lambda_i\ge0,\ i=1,\dots,s,\\
		\lambda_i f_i(x,v)=0,\ i=1,\dots,s,\\
		(0^\top,z^\top)^\top\in \partial \left[ f_0+\sum_{i=1}^s\lambda_i f_i \right](x,v). 
	\end{split}
\end{equation}
Let $K_0(x,v)$ be the set of $(\lambda,z)$ satisfying
\begin{equation}\label{eq_def_K0}
	\begin{split}
		\lambda_i\ge0,\ i=1,\dots,s,\\
		\lambda_i f_i(x,v)=0,\ i=1,\dots,s,\\
		(0^\top,z^\top)^\top \in \partial \left[ \sum_{i=1}^s\lambda_i f_i \right](x,v).
	\end{split}
\end{equation}


\begin{definition}[Tame\cite{rockafellar1982lagrange}]
	We say that for a given $v$ that program \eqref{eq_prameteric_mp} is tame if there exists $\delta_0>0$ and $\alpha_0>0$ with the property: there is a bounded mapping $\varphi$ from the set
	\begin{align*}
		\left\lbrace v':q(v')<\alpha_0 \text{ and } \left\| v'-v\right\| <\delta_0\right\rbrace 
	\end{align*}
	to $\mathbb{R}^n$ such that for every $v'$ in this set, $\varphi(v')$ is an optimal solution of ($\mathbf{P}_{v'}$).
\end{definition}

Below is Rockafellar's result \cite{rockafellar1982lagrange} regarding the differential properties of marginal functions. This theorem derives outer estimates for Clarke's generalized gradients in terms of Lagrange multiplier vectors that satisfy the Clarke's necessary conditions \cite{clarke1976new}.
\begin{theorem}[Corollary 2 in page 47 of \cite{rockafellar1982lagrange}]\label{th_rockafellar1982}
	Consider the program \eqref{eq_prameteric_mp} where $f_0,f_1\dots,f_s$ is locally Lipschitz continuous on $\mathbb{R}^n\times\mathbb{R}^m$. Suppose $v\in \mathbb{R}^m$ is such that \eqref{eq_prameteric_mp} is tame and $K_0(x,v)=\left\lbrace (0,0)\right\rbrace $ for all $x\in P(v)$. Then, $q(\cdot)$ is locally Lipschitz continuous near $v$ and
	\begin{align*}
		\partial q(v)\subseteq \overline{\text{co}\left\lbrace z:\exists \lambda \in\mathbb{R}^s, (\lambda,z)\in  \bigcup_{x\in P(v)}K(x,v)\right\rbrace }.
	\end{align*}
\end{theorem}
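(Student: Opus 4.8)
The statement is Rockafellar's sensitivity theorem for parametric mathematical programs, and the plan is to prove it in three stages: first deduce that the optimal value $q$ is \emph{calm} at $v$ from the constraint qualification $K_0(x,v)=\{(0,0)\}$; then upgrade calmness together with tameness to local Lipschitz continuity of $q$ near $v$; and finally extract the subgradient estimate by a limiting Karush--Kuhn--Tucker argument along a sequence of perturbed parameters. For the first stage, the hypothesis $K_0(x,v)=\{(0,0)\}$ for every $x\in P(v)$ says exactly that $(\mathbf{P}_v)$ admits no nonzero \emph{abnormal} Lagrange multiplier vector at any optimal solution; by the Rockafellar--Clarke calmness criterion this removes the sole obstruction to calmness of $q$, and, combined with the boundedness of the tame optimal selection $\varphi$, it yields a local error bound for the feasible system $\{x:f_i(x,\cdot)\le 0\}$ near the points of $P(v)$.

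For the second stage, I would fix a neighbourhood $W$ of $v$ on which $q$ is finite and $\varphi(W)$ lies in a fixed ball (possible after shrinking the set supplied by tameness). Given $v',v''\in W$, take $x''\in P(v'')$; the error bound from the first stage lets one move $x''$ to a feasible point of $(\mathbf{P}_{v'})$ at a displacement $O(\|v'-v''\|)$, and since $f_0$ is locally Lipschitz this gives $q(v')\le q(v'')+L\|v'-v''\|$. Symmetrising yields $|q(v')-q(v'')|\le L\|v'-v''\|$, so $q$ is locally Lipschitz near $v$. Equivalently, the constraint qualification permits an exact $\ell_1$-penalty representation $q(w)=\min_{x\in C}\bigl(f_0(x,w)+r\sum_i\max\{f_i(x,w),0\}\bigr)$ on $W$ over a fixed compact $C\supseteq P(v)$, from which the same bound is immediate by sandwiching.

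For the third stage, since $q$ is Lipschitz near $v$, by \autoref{def_clarke_D} it suffices to treat a generating point $\zeta=\lim_k Dq(v_k)$ with $v_k\to v$ and $q$ differentiable at $v_k$. Pick $x_k:=\varphi(v_k)\in P(v_k)$; boundedness of $\varphi$ gives a subsequence $x_k\to x^\ast$, and outer semicontinuity of $P(\cdot)$ (again a consequence of the constraint qualification) gives $x^\ast\in P(v)$. The constraint qualification propagates to parameters near $v$, so the Clarke necessary conditions hold at $x_k$ for $(\mathbf{P}_{v_k})$: there is $\lambda^k\ge 0$ with $\lambda^k_i f_i(x_k,v_k)=0$ and $0\in\partial_x[f_0+\sum_i\lambda^k_i f_i](x_k,v_k)$, and moreover the local analysis at the differentiable parameter $v_k$ forces $(0^\top,Dq(v_k)^\top)^\top\in\partial[f_0+\sum_i\lambda^k_i f_i](x_k,v_k)$. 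The multipliers $\lambda^k$ stay bounded — a blow-up $\|\lambda^k\|\to\infty$ would, after normalising and passing to the limit, produce a nonzero element of $K_0(x^\ast,v)$, contradicting the hypothesis. Extracting $\lambda^k\to\lambda\ge 0$ (so $\lambda_i f_i(x^\ast,v)=0$) and using upper semicontinuity of the Clarke generalized gradient under the joint limit $(x_k,v_k,\lambda^k)\to(x^\ast,v,\lambda)$, one gets $(0^\top,\zeta^\top)^\top\in\partial[f_0+\sum_i\lambda_i f_i](x^\ast,v)$, i.e. $(\lambda,\zeta)\in K(x^\ast,v)$; taking the closed convex hull over all generating $\zeta$ gives the claimed inclusion.

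The main obstacle is the third stage. The two delicate points there — boundedness of the multiplier sequences $\lambda^k$ (needed to extract a limit at all) and preservation, under the simultaneous limits $x_k\to x^\ast$, $v_k\to v$, $\lambda^k\to\lambda$, of the pattern ``$0$ in the $x$-component, $Dq(v_k)$ in the $v$-component'' of a point of $\partial[f_0+\sum_i\lambda^k_i f_i]$ — both rest essentially on the constraint qualification $K_0(x,v)=\{(0,0)\}$, the former to exclude multiplier divergence and the latter because the Clarke generalized gradient is only upper semicontinuous, so the non-separable joint subgradient cannot be handled coordinate by coordinate. By contrast, the first two stages are comparatively routine once the calmness/error-bound consequence of the constraint qualification is available.
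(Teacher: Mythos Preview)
The paper does not supply its own proof of this statement: \autoref{th_rockafellar1982} is stated purely as a citation of Rockafellar's result (Corollary~2 on p.~47 of \cite{rockafellar1982lagrange}), and the paper immediately moves on to derive the specialized \autoref{th_Rockafellar & Gauvin} from it. So there is nothing in the paper to compare your proposal against.

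As a standalone sketch your three-stage plan is broadly in the right spirit, but there is a real gap in stage three. You assert that at a differentiable point $v_k$ of $q$ with $x_k\in P(v_k)$, ``the local analysis at the differentiable parameter $v_k$ forces $(0^\top,Dq(v_k)^\top)^\top\in\partial[f_0+\sum_i\lambda^k_i f_i](x_k,v_k)$.'' This is precisely the sensitivity statement you are trying to prove, restricted to differentiable parameters; the Clarke necessary conditions only give you the $x$-component $0$, and producing $Dq(v_k)$ in the $v$-component is not automatic. Rockafellar's actual argument does not proceed this way: he works through his augmented/extended Lagrangian framework and subdifferential calculus for marginal functions directly, rather than bootstrapping from a pointwise envelope-type identity at differentiable parameters. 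If you want to repair your route, you would need to establish separately a one-sided directional estimate of the form $q'(v_k;h)\le\inf_{(\lambda,z)\in K(x_k,v_k)}\langle z,h\rangle$ (or its Clarke analogue) at each $v_k$, which is itself a nontrivial sensitivity lemma.
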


In the parametric mathematical program that we study (see \eqref{modified_program}), $f_0$ is locally Lipschitz continuous on $\mathbb{R}^n\times\mathbb{R}^m$ and $f_1,\dots,f_s$ are continuously differentiable on $\mathbb{R}^n\times\mathbb{R}^m$. This condition is between the conditions of \cite[Theorem 5.3]{gauvin1982differential} and \autoref{th_rockafellar1982}. \autoref{th_rockafellar1982} is too general for our study. We need a specialized proposition (\autoref{th_Rockafellar & Gauvin}) derived from \autoref{th_rockafellar1982}. Below are some necessary preliminaries of the specialized proposition.
\begin{definition}[Uniformly compact\cite{gauvin1982differential}]
	Let $E\subseteq\mathbb{R}^k$. A mapping $F:E\rightarrow 2^{\mathbb{R}^l}$ is said to be uniformly compact near $\bar{x}\in E$ if there is a neighborhood $N(\bar{x})$ of $\bar{x}$ such that the closure of $\bigcup_{x\in N(\bar{x})}F(x)$ is compact.
\end{definition}

\begin{proposition}\label{th_uniformly_compact->tame}
	Consider \eqref{eq_prameteric_mp} with a given parameter vector $v$. If $S(\cdot)$ is uniformly compact near $v$, then the program \eqref{eq_prameteric_mp} is tame at $v$.
\end{proposition}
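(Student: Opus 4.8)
The plan is to convert the uniform compactness of $S(\cdot)$ into a single compact set $C$ that contains every feasible point of every program $(\mathbf{P}_{v'})$ with $v'$ ranging over a small ball about $v$, and then to let $\varphi$ be an \emph{arbitrary} pointwise selection of optimal solutions. Boundedness of $\varphi$ will then be automatic because its range sits inside $C$, and the hypothesis ``$q(v')<\alpha_0$'' in the definition of tameness is exactly what is needed to make the selection possible at every admissible $v'$.

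First I would invoke the definition of uniform compactness of $S(\cdot)$ at $v$: there is a neighborhood $N(v)$ such that $\overline{\bigcup_{v'\in N(v)}S(v')}$ is compact; picking $\delta_0>0$ with $B(v,\delta_0)\subseteq N(v)$, the set $C\triangleq\overline{\bigcup_{v'\in B(v,\delta_0)}S(v')}$ is a closed subset of a compact set, hence compact (in particular bounded). I would then fix any $\alpha_0>0$ (say $\alpha_0=1$) and set $A\triangleq\{v':q(v')<\alpha_0,\ \|v'-v\|<\delta_0\}$. Next I would verify that $P(v')\neq\emptyset$ for every $v'\in A$: since $q(v')<\alpha_0<+\infty$ and $q(v')=+\infty$ whenever $S(v')=\emptyset$ (our convention on the infimum of the empty set), necessarily $S(v')\neq\emptyset$; continuity of each $f_i$ makes $S(v')=\bigcap_{i=1}^{s}\{x:f_i(x,v')\le0\}$ closed, and since $S(v')\subseteq C$ it is compact, so the continuous function $f_0(\cdot,v')$ attains its minimum on $S(v')$ and hence $P(v')\neq\emptyset$. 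Finally, for each $v'\in A$ I would choose $\varphi(v')\in P(v')$; then $\varphi(A)\subseteq C$ is bounded and each $\varphi(v')$ is an optimal solution of $(\mathbf{P}_{v'})$, so \eqref{eq_prameteric_mp} is tame at $v$.

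I do not expect a genuine obstacle: the argument is a short compactness argument. The only two points that require a moment's care are (i) that the bound $q(v')<\alpha_0$ already forces $S(v')$ to be nonempty and, together with continuity of the constraint functions and the compactness of $C$, to be compact — so the minimum is attained, $P(v')\neq\emptyset$, and $\varphi$ is well defined on all of $A$; and (ii) that ``tame'' only demands $\varphi$ be bounded, not continuous, measurable, or canonical, so an arbitrary pointwise choice suffices and no selection theorem is needed.
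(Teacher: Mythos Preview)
Your proposal is correct and follows the same approach as the paper: use uniform compactness to produce a bounded set containing all feasible points for nearby parameters, then take $\varphi$ to be any pointwise selection of optimal solutions. Your version is in fact more careful than the paper's, which defines $\phi$ on all of $B(v,\delta_0)$ without verifying $P(v')\neq\emptyset$ there; you correctly restrict to the set $A=\{v':q(v')<\alpha_0,\ \|v'-v\|<\delta_0\}$ and use $q(v')<\alpha_0$ together with compactness of $S(v')$ to guarantee the minimum is attained.
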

\begin{proof}
	By the uniformly compactness, there exists $\delta_0>0$ such that $\bigcup_{v'\in B(v,\delta_0)}S(v')$ is bounded. Let $\phi$ be a mapping from $B(v,\delta_0)$ to $\mathbb{R}^n$ such that $\phi(v')\in P(v')$ for any $v'\in B(v,\delta_0)$. Since $P(v')\in S(v')\subseteq\bigcup_{v'\in B(v,\delta_0)}S(v')$ for any $v'\in B(v,\delta_0)$, $\phi$ is bounded. Thus, \eqref{eq_prameteric_mp} is tame at $v$.
\end{proof}

\begin{definition}[Mangasarian-Fromovitz regular]
	Consider the program \eqref{eq_prameteric_mp} where $f_1,\dots,f_s$ are continuously differentiable on $\mathbb{R}^n\times\mathbb{R}^m$. Given a parameter vector $v\in \mathbb{R}^m$, a feasible point $x\in S(v)$ is said to be Mangasarian-Fromovitz regular if there exists a vector $r\in\mathbb{R}^n$ such that
	\begin{equation}\label{eq_M-F_condition}
		\begin{split}
			&D_x f_i (x,v)^\top r<0,\\
			&\forall i\in I(x,v)\triangleq\left\lbrace i\in[s]:f_i(x,v)=0\right\rbrace.
		\end{split}
	\end{equation}
\end{definition}

\begin{proposition}\label{th_MF->K_0=0}
	Consider the program \eqref{eq_prameteric_mp} where $f_1,\dots,f_s$ are continuously differentiable on $\mathbb{R}^n\times\mathbb{R}^m$. Given a parameter vector $v\in \mathbb{R}^n$ and a feasible point $x\in S(v)$, if $x$ is Mangasarian-Fromovitz regular, then $K_0(x,v)=\left\lbrace (0,0)\right\rbrace $.
\end{proposition}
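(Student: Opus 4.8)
The plan is to exploit the hypothesis that $f_1,\dots,f_s$ are continuously differentiable, which collapses the Clarke generalized gradient appearing in the definition of $K_0(x,v)$ to an ordinary gradient. For any fixed $\lambda=(\lambda_1,\dots,\lambda_s)^\top$, the function $\sum_{i=1}^s\lambda_i f_i$ is continuously differentiable on $\mathbb{R}^n\times\mathbb{R}^m$, so by the remark following \autoref{def_clarke_D} (equivalently, by the equality case of \autoref{th_Clarke_linear_combination}) the set $\partial\left[\sum_{i=1}^s\lambda_i f_i\right](x,v)$ is the singleton consisting of $\left(D_x\left[\sum_{i=1}^s\lambda_i f_i\right](x,v)^\top, D_v\left[\sum_{i=1}^s\lambda_i f_i\right](x,v)^\top\right)^\top$. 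Hence the third requirement in \eqref{eq_def_K0} says precisely that $\sum_{i=1}^s\lambda_i D_x f_i(x,v)=0$, and then $z$ is forced to be $\sum_{i=1}^s\lambda_i D_v f_i(x,v)$. Consequently, to prove $K_0(x,v)=\left\lbrace(0,0)\right\rbrace$ it suffices to show that the only $\lambda$ with $\lambda_i\ge 0$, $\lambda_i f_i(x,v)=0$ for all $i$, and $\sum_{i=1}^s\lambda_i D_x f_i(x,v)=0$ is $\lambda=0$.

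Next I would use complementary slackness to localize $\lambda$ to the active set: since $x\in S(v)$ gives $f_i(x,v)\le 0$, for every $i\notin I(x,v)$ we have $f_i(x,v)<0$, and $\lambda_i f_i(x,v)=0$ then forces $\lambda_i=0$. Thus any admissible $\lambda$ is supported on $I(x,v)$ and satisfies $\sum_{i\in I(x,v)}\lambda_i D_x f_i(x,v)=0$. If $I(x,v)=\emptyset$ the claim is immediate, so assume $I(x,v)\neq\emptyset$ and suppose, toward a contradiction, that some admissible $\lambda\neq 0$ exists. Take the Mangasarian–Fromovitz vector $r\in\mathbb{R}^n$ from \eqref{eq_M-F_condition}, so $D_x f_i(x,v)^\top r<0$ for all $i\in I(x,v)$. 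Pairing the zero sum with $r$ yields $0=\left(\sum_{i\in I(x,v)}\lambda_i D_x f_i(x,v)\right)^\top r=\sum_{i\in I(x,v)}\lambda_i\left(D_x f_i(x,v)^\top r\right)$; every summand is $\le 0$, and since $\lambda\neq 0$ is supported on $I(x,v)$ at least one summand is strictly negative, so the sum is strictly negative — a contradiction. Hence $\lambda=0$, whence $z=0$ and $K_0(x,v)=\left\lbrace(0,0)\right\rbrace$.

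I expect no real obstacle here: the only points requiring care are the bookkeeping for the Clarke gradient of the finite sum with respect to the joint variable $(x,v)$ (legitimate precisely because each $f_i$ is $C^1$), and the trivial but necessary separate handling of the case in which no constraint is active at $x$. The contradiction step is the standard Gordan/Motzkin-type alternative that underlies the Mangasarian–Fromovitz constraint qualification, so it goes through verbatim.
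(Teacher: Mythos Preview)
Your proposal is correct and follows essentially the same approach as the paper: collapse the Clarke gradient to the classical gradient via continuous differentiability, use complementary slackness to restrict $\lambda$ to the active index set, then pair the resulting relation $\sum_{i\in I(x,v)}\lambda_i D_x f_i(x,v)=0$ with the Mangasarian--Fromovitz direction $r$ and use the sign conditions to force $\lambda=0$ (hence $z=0$). The only cosmetic differences are that you phrase the final step as a contradiction and explicitly single out the case $I(x,v)=\emptyset$, whereas the paper argues directly.
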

\begin{proof}
	It is easy to obtain that $(0,0)\in K_0(x,v)$. Select $(\lambda,z)$ from $K_0(x,v)$. According to \eqref{eq_def_K0},  \autoref{th_differentiable_Clarke} and \autoref{th_Clarke_linear_combination}, $(\lambda,z)$ satisfies
	\begin{subequations}
		\begin{align}
			\lambda_i\ge0,\ i=1,\dots,s,\label{lambda_nonnegtive_K0}\\
			\lambda_i f_i(x,v)=0,\ i=1,\dots,s,\label{complementary slackness}\\
			0=\sum_{i=1}^s\lambda_i D_x f_i(x,v),\label{nabla_x L = 0}\\
			z=\sum_{i=1}^s\lambda_i D_v f_i(x,v)\label{z_def_K0}.
		\end{align}
	\end{subequations}
	According to the Mangasarian-Fromovitz regularity condition, there exists a vector $r\in \mathbb{R}^n$ such that
	\begin{equation}\label{eq_M-f_in_K0}
		\begin{split}
			&D_x f_i (x,v)^\top r<0,\ \forall i\in I(x,v).
		\end{split}
	\end{equation}
	By \eqref{complementary slackness}, we have
	\begin{align}
		\lambda_i=0,\ \forall i\in[s]\setminus I(x,v).
	\end{align}
	Substituting the above equation into \eqref{nabla_x L = 0} and multiplying both sides of the equation by $r$, we have
	\begin{align}\label{sum_m-f}
		0=\sum_{i\in I(x,v)}\lambda_i D_x f_i(x,v)^\top r.
	\end{align}
	Combining \eqref{lambda_nonnegtive_K0}\eqref{eq_M-f_in_K0}\eqref{sum_m-f}, we have
	\begin{align*}
		\lambda_i=0,\ \forall i\in I(x,v).
	\end{align*}
	Then, $\lambda=0$. Substituting $\lambda=0$ into \eqref{z_def_K0}, we have $z=0$. Thus, $K_0(x,v)=\left\lbrace (0,0)\right\rbrace$.  
\end{proof}

\autoref{th_convergence_lambda} and \autoref{th_K_compactness} are variants of \cite[Theorem 3.4 and Corollary 3.6]{gauvin1982differential}. In \cite{gauvin1982differential}, $f_0$ and $f_1,\dots,f_s$ are all continuously differentiable.
\begin{proposition}\label{th_convergence_lambda}
	Consider the program \eqref{eq_prameteric_mp} where $f_0$ is locally Lipschitz continuous on $\mathbb{R}^n\times\mathbb{R}^m$ and $f_1,\dots,f_s$ are continuously differentiable on $\mathbb{R}^n\times\mathbb{R}^m$. Given a parameter  vector $v$, assume that there exists a optimal solution $x\in P(v)$ such that $x$ is Mangasarian-Fromovitz regular. Let $\left\lbrace x_n\right\rbrace $, $\left\lbrace v_n\right\rbrace$, $\left\lbrace\lambda^n \right\rbrace$, $\left\lbrace z_n \right\rbrace$ be sequences such that $x_n\in P(v_n)$, $(\lambda^n,z_n)\in K(x_n,v_n)$, $x_n\rightarrow x$, $v_n\rightarrow v$. Then, there exist subsequences $\left\lbrace x_m\right\rbrace $, $\left\lbrace v_m\right\rbrace$, $\left\lbrace\lambda^m \right\rbrace$, $\left\lbrace z_m \right\rbrace$ such that $(\lambda^m,z_m)\in K(x_m,v_m)$ and $\lambda^m\rightarrow \lambda$, $z_m\rightarrow z$ for some $(\lambda,z)\in K(x,v)$.
\end{proposition}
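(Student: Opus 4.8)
The plan is to run the classical compactness argument for Lagrange multipliers, in the spirit of \cite{gauvin1982differential}, but adapted to the feature that here only $f_0$ is merely locally Lipschitz whereas $f_1,\dots,f_s$ are $C^1$. First I would rewrite the defining inclusion of $K$ in a workable form: since $\sum_i\lambda_i f_i$ is continuously differentiable, \autoref{th_Clarke_linear_combination} gives $\partial[f_0+\sum_i\lambda_i f_i](x,v)=\partial f_0(x,v)+\sum_i\lambda_i(D_x f_i(x,v)^\top,D_v f_i(x,v)^\top)^\top$, so $(\lambda,z)\in K(x,v)$ is equivalent to: $\lambda\ge 0$, $\lambda_i f_i(x,v)=0$ for all $i$, and there exists $\xi=(\xi_x,\xi_v)\in\partial f_0(x,v)$ with $0=\xi_x+\sum_i\lambda_i D_x f_i(x,v)$ and $z=\xi_v+\sum_i\lambda_i D_v f_i(x,v)$. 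I would use this reformulation both along the sequence at $(x_n,v_n)$ and in the limit; note that $(\lambda^m,z_m)\in K(x_m,v_m)$ is automatic for any subsequence, so the task reduces to producing convergent subsequences and identifying the limit.

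The crux is showing $\{\lambda^n\}$ is bounded. Arguing by contradiction, pass to a subsequence with $t_n:=\|\lambda^n\|\to+\infty$ and $\mu^n:=\lambda^n/t_n\to\mu$, where $\mu\ge 0$ and $\|\mu\|=1$. Since $f_0$ is Lipschitz of some rank $L$ on a neighbourhood of $(x,v)$ and $(x_n,v_n)\to(x,v)$, \autoref{th_Clark_GG_bounded} bounds the associated $\xi^n\in\partial f_0(x_n,v_n)$ by $L$; dividing the $x$-block relation by $t_n$ and letting $n\to\infty$, with $D_x f_i$ continuous, yields $\sum_i\mu_i D_x f_i(x,v)=0$. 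For any $i\notin I(x,v)$ we have $f_i(x,v)<0$ (as $x\in S(v)$), hence $f_i(x_n,v_n)<0$ and then $\lambda_i^n=0$ for large $n$ by complementary slackness, so $\mu_i=0$; thus $\mu$ is supported on $I(x,v)$. Taking the Mangasarian--Fromovitz vector $r$ for $x$ and dotting $\sum_{i\in I(x,v)}\mu_i D_x f_i(x,v)=0$ with $r$ gives $\sum_{i\in I(x,v)}\mu_i\,D_x f_i(x,v)^\top r=0$ with every summand nonpositive, forcing $\mu_i=0$ for all $i\in I(x,v)$ as well. Hence $\mu=0$, contradicting $\|\mu\|=1$.

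With $\{\lambda^n\}$ bounded I would extract a subsequence along which $\lambda^m\to\lambda\ge 0$. The corresponding $\xi^m\in\partial f_0(x_m,v_m)$ stay bounded by the local Lipschitz rank, and since $z_m=\xi_v^m+\sum_i\lambda_i^m D_v f_i(x_m,v_m)$ with the sum converging (continuity of $D_v f_i$ and $\lambda^m\to\lambda$), the sequence $\{z_m\}$ is bounded too; refine once more so that $z_m\to z$. Then $\xi^m=(0^\top,z_m^\top)^\top-\sum_i\lambda_i^m(D_x f_i(x_m,v_m)^\top,D_v f_i(x_m,v_m)^\top)^\top$ converges to some $\xi$, and the upper semicontinuity (closed graph) of Clarke's generalized gradient \cite{clarke1990optimization} gives $\xi\in\partial f_0(x,v)$, i.e. $(0^\top,z^\top)^\top\in\partial[f_0+\sum_i\lambda_i f_i](x,v)$. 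Passing to the limit in $\lambda_i^m f_i(x_m,v_m)=0$ yields complementary slackness at $(x,v)$, and $\lambda\ge 0$ is preserved, so $(\lambda,z)\in K(x,v)$, which closes the argument.

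I expect the boundedness step to be the main obstacle: the delicate points are that the normalized limit $\mu$ must be shown to be supported on the active set $I(x,v)$ before the Mangasarian--Fromovitz condition can be invoked (this is exactly where $x\in P(v)$ being MF-regular, not merely feasible, is needed), and that Clarke's generalized gradient of $f_0$ must be handled through its boundedness (\autoref{th_Clark_GG_bounded}) and its closed-graph property rather than through continuity of a classical derivative as in \cite{gauvin1982differential}.
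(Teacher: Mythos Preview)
Your proof is correct and follows the same overall architecture as the paper's: rewrite the inclusion defining $K$ via \autoref{th_Clarke_linear_combination}, establish boundedness of $\{\lambda^n\}$ using the Mangasarian--Fromovitz vector $r$, extract convergent subsequences, and pass to the limit using boundedness (\autoref{th_Clark_GG_bounded}) and the closed-graph property of Clarke's generalized gradient. The one substantive difference is in the boundedness step. You argue by contradiction through the normalized sequence $\mu^n=\lambda^n/\|\lambda^n\|$, whereas the paper derives an explicit bound directly: from $p_n^\top r=-\sum_j\lambda_j^n D_x f_j(x_n,v_n)^\top r$ and the fact that, for $n$ large, $\lambda_j^n=0$ when $j\notin I(x,v)$ and $D_x f_j(x_n,v_n)^\top r<0$ when $j\in I(x,v)$, one isolates a single term to obtain $\lambda_i^n\le \dfrac{-p_n^\top r}{D_x f_i(x_n,v_n)^\top r}\le \dfrac{L\|r\|}{-D_x f_i(x_n,v_n)^\top r}$ for each $i\in I(x,v)$. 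The paper's route yields a quantitative bound with no subsequence extraction at this stage, while your normalization argument is the more commonly seen compactness device; both are standard and equally valid here.
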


\begin{proof}
	 Let $r$ be given by Mangasarian-Fromovitz regularity condition, i.e. 
	\begin{align*}
		& D_xf_i(x,v)^\top r<0,\\
		&\forall i\in I (x,v)=\left\lbrace i\in[s]:f_i(x,v)=0\right\rbrace 
	\end{align*}
	By the continuity of function $D_xf_i$, for $n$ sufficiently large,
	\begin{equation}\label{eq_n_M-F}
		\begin{split}
			& D_xf_i(x_n,v_n)^\top r<0,\\
			&\forall i\in I (x,v)=\left\lbrace i\in[s]:f_i(x,v)=0\right\rbrace 
		\end{split}
	\end{equation}
	According to \autoref{th_Clarke_linear_combination} and \autoref{th_differentiable_Clarke}, for any $(x,v)$
	\begin{align*}
		\partial \left[ f_0+\sum_{i=1}^s\lambda_i f_i \right](x,v)=\partial f_0(x,v)+\sum_{i=1}^s\lambda_i Df_i (x,v)
	\end{align*}
	For each $(x_n,v_n)$, take $(\lambda^n,z_n)\in K(x_n,v_n)$. Then, there exists $ (p_n^\top,q_n^\top)^\top \in \partial f_0 (x_n,v_n)$ such that,
	\begin{subequations}\label{eq_n_K-T}
		\begin{align}
			0=p_n+\sum_{j=1}^s\lambda_j^n D_x f_j(x_n,v_n),\label{eq_n_K-T1}\\
			z_n=q_n+\sum_{j=1}^s\lambda_j^n D_v f_j(x_n,v_n).
		\end{align}
	\end{subequations}
	From \eqref{eq_n_M-F} and \eqref{eq_n_K-T1}, we have that
	\begin{align*}
		p_n^\top r=&-\sum_{j=1}^s\lambda_j^n D_x f_j(x_n,v_n)^\top r\\
		\ge &-\lambda_i^n D_x f_i(x_n,v_n)^\top r
	\end{align*}
	for any $i\in I (x,v)$. Then, for $n$ sufficiently large,
	\begin{equation}\label{eq_lambda_bounded}
		\begin{aligned}
			\lambda_i^n
			&\le \frac
			{-p_n^\top r}
			{D_xf_i(x_n,v_n)^\top r}\\
			&\le \frac
			{-L\left\| r\right\| }
			{D_xf_i(x_n,v_n)^\top r}  &\text{if } i\in I(x,v),\\
			\lambda_i^n&= 0 &\text{if } i\notin I(x,v),
		\end{aligned}
	\end{equation}
	where $L$ is a Lipschitz continuous rank of $f_0$ near $(x,v)$. From \eqref{eq_lambda_bounded}, the sequence $\left\lbrace \lambda^n\right\rbrace $ is bounded. In addition, the sequence $\left\lbrace (p_n^\top,q_n^\top)^\top\right\rbrace $ is also bounded according to \autoref{th_Clark_GG_bounded}. Therefore, there exist convergent subsequences $\left\lbrace \lambda^m\right\rbrace $ and $\left\lbrace (p_m^\top,q_m^\top)^\top\right\rbrace $.
	Let
	\begin{align*}
		\lambda&=\lim\limits_{m\rightarrow +\infty} \lambda^m,\\
		(p^\top,q^\top)^\top&=\lim\limits_{m\rightarrow +\infty}(p_m^\top,q_m^\top)^\top.
	\end{align*}
	Take $m\rightarrow +\infty$, by \eqref{eq_n_K-T}, we have
	\begin{align*}
		0=p+\sum_{j=1}^s\lambda_j D_x f_j(x,v),\\
		z=q+\sum_{j=1}^s\lambda_j D_v f_j(x,v).
	\end{align*}
	where $z=\lim\limits_{m\rightarrow+\infty}z_m$.
	By $ (p_n^\top,q_n^\top)^\top \in \partial f_0 (x_n,v_n)$ and the definition of Clarke's generalized gradient, we obtain $(p^\top,q^\top)^\top \in \partial f_0 (x,v)$. Thus, $(\lambda,z)\in K(x,v)$. The proof is completed.
\end{proof}

\begin{corollary}\label{th_K_compactness}
	Consider the program \eqref{eq_prameteric_mp} where $f_0$ is locally Lipschitz continuous on $\mathbb{R}^n\times\mathbb{R}^m$ and $f_1,\dots,f_s$ are continuously differentiable on $\mathbb{R}^n\times\mathbb{R}^m$. Given a parameter  vector $v$, assume that $S(v)$ is nonempty and compact and $x$ is Mangasarian-Fromovitz regular for any $x\in P(v)$. Then, $\bigcup_{x\in P(v)}K(x,v)$ is compact.
\end{corollary}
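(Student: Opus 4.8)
The plan is to derive \autoref{th_K_compactness} from \autoref{th_convergence_lambda} by a sequential-compactness argument, exploiting the fact that a subset of a finite-dimensional Euclidean space is compact if and only if every sequence in it has a subsequence converging to a point of the set.

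First I would record that $P(v)$ is itself nonempty and compact. Since $f_0$ is locally Lipschitz continuous it is continuous, so $f_0(\cdot,v)$ attains its minimum over the nonempty compact set $S(v)$; hence $q(v)\in\mathbb{R}$ and $P(v)\neq\emptyset$. Moreover $P(v)=\left\lbrace x\in S(v):f_0(x,v)=q(v)\right\rbrace$ is the intersection of the closed set $S(v)$ with the closed set $\left\lbrace x:f_0(x,v)=q(v)\right\rbrace$, hence a closed subset of the compact set $S(v)$, and therefore compact.

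Next, write $U\triangleq\bigcup_{x\in P(v)}K(x,v)\subseteq\mathbb{R}^s\times\mathbb{R}^m$ and let $\left\lbrace (\lambda^n,z_n)\right\rbrace$ be an arbitrary sequence in $U$; for each $n$ pick $x_n\in P(v)$ with $(\lambda^n,z_n)\in K(x_n,v)$. Since $P(v)$ is compact, after passing to a subsequence we may assume $x_n\rightarrow x^*$ for some $x^*\in P(v)$, and by hypothesis $x^*$ is Mangasarian-Fromovitz regular. Applying \autoref{th_convergence_lambda} to the sequences $\left\lbrace x_n\right\rbrace$, the constant sequence $v_n\equiv v$, $\left\lbrace \lambda^n\right\rbrace$ and $\left\lbrace z_n\right\rbrace$ produces a further subsequence along which $\lambda^n\rightarrow\lambda^*$ and $z_n\rightarrow z^*$ with $(\lambda^*,z^*)\in K(x^*,v)\subseteq U$. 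Thus every sequence in $U$ admits a subsequence converging to a point of $U$, so $U$ is compact.

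The argument is routine once \autoref{th_convergence_lambda} is available; the only points needing care are verifying that $P(v)$ is closed --- so that the limit $x^*$ of $\left\lbrace x_n\right\rbrace$ is again an optimal solution and inherits the Mangasarian-Fromovitz regularity hypothesis --- and checking that \autoref{th_convergence_lambda} may legitimately be invoked with the constant parameter sequence $v_n\equiv v$. I do not anticipate any genuine obstacle.
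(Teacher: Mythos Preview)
Your proposal is correct and follows essentially the same route as the paper: take an arbitrary sequence in $\bigcup_{x\in P(v)}K(x,v)$, extract a subsequence with $x_n\to x\in P(v)$ using compactness of $P(v)$, and then invoke \autoref{th_convergence_lambda} with the constant sequence $v_n\equiv v$ to obtain a convergent subsequence with limit in $K(x,v)$. Your additional explicit verification that $P(v)$ is nonempty and compact is a welcome detail the paper leaves implicit.
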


\begin{proof}
	Take a sequence $\left\lbrace (\lambda^n,z_n)\right\rbrace \subseteq \bigcup_{x\in P(v)}K(x,v) $. There exists a sequence $\left\lbrace x_n\right\rbrace$ such that $(\lambda^n,z_n)\in K(x_n,v)$. Since $P(v)$ is compact, there exist a subsequence $\left\lbrace x_m\right\rbrace$ and $x\in P(v)$ such that $x_m\rightarrow x$. Let $v_m=v$. Applying \autoref{th_convergence_lambda} to subsequences $\left\lbrace x_m\right\rbrace $, $\left\lbrace v_m\right\rbrace$, $\left\lbrace\lambda^m \right\rbrace$, $\left\lbrace z_m \right\rbrace$, there exists a subsequence $\left\lbrace (\lambda^k,z_k)\right\rbrace $ with $(\lambda^k,z_k)\in K(x_k,v)$ such that $(\lambda^k,z_k)\rightarrow (\lambda,z)\in K(x,v)$. Thus, $\bigcup_{x\in P(v)}K(x,v)$ is compact. The proof is completed.
\end{proof}

Combining \autoref{th_rockafellar1982}, \autoref{th_uniformly_compact->tame}, \autoref{th_MF->K_0=0} and \autoref{th_K_compactness}, we have the following corollary.
\begin{corollary}\label{th_Rockafellar & Gauvin}
	Consider the program \eqref{eq_prameteric_mp} where $f_0$ is locally Lipschitz continuous on $\mathbb{R}^n\times\mathbb{R}^m$ and $f_1,\dots,f_s$ are continuously differentiable on $\mathbb{R}^n\times\mathbb{R}^m$. Suppose $v\in \mathbb{R}^m$ is such that $S(\cdot)$ is uniformly compact near $v$ and $x$ is Mangasarian-Fromovitz regular for all $x\in P(v)$. Then, $q(\cdot)$ is locally Lipschitz continuous near $v$ and
	\begin{align*}
		\partial q(v)\subseteq {\text{co}\left\lbrace z:\exists \lambda \in\mathbb{R}^s, (\lambda,z)\in  \bigcup_{x\in P(v)}K(x,v)\right\rbrace }.
	\end{align*}
\end{corollary}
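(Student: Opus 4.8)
The plan is to string together the four preceding results. First I would verify the hypotheses of \autoref{th_rockafellar1982} at the parameter $v$. Tameness of $(\mathbf{P}_v)$ at $v$ is immediate from \autoref{th_uniformly_compact->tame}, since $S(\cdot)$ is uniformly compact near $v$; and $K_0(x,v)=\{(0,0)\}$ for every $x\in P(v)$ follows from \autoref{th_MF->K_0=0}, because each such $x$ is Mangasarian--Fromovitz regular and $f_1,\dots,f_s$ are continuously differentiable. Hence \autoref{th_rockafellar1982} applies and gives that $q(\cdot)$ is locally Lipschitz continuous near $v$ together with
\begin{align*}
	\partial q(v)\subseteq\overline{\mathrm{co}\left\lbrace z:\exists\lambda\in\mathbb{R}^s,\ (\lambda,z)\in\bigcup_{x\in P(v)}K(x,v)\right\rbrace}.
\end{align*}

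The only remaining task is to drop the closure, i.e.\ to show that the set inside it is already closed, which I would do by showing it is compact. I would first check the hypotheses of \autoref{th_K_compactness}: Mangasarian--Fromovitz regularity on $P(v)$ is assumed, and $S(v)$ must be nonempty and compact. Compactness of $S(v)$ is obtained because uniform compactness of $S(\cdot)$ near $v$ forces $S(v)$ to be bounded, while continuity of the $f_i$ makes $S(v)=\{x:f_i(x,v)\le 0,\ i\in[s]\}$ closed; nonemptiness holds in the feasible situation we are in (and if $S(v)=\emptyset$ then $P(v)=\emptyset$ and the inclusion is vacuous). \autoref{th_K_compactness} then yields that $\bigcup_{x\in P(v)}K(x,v)\subseteq\mathbb{R}^s\times\mathbb{R}^m$ is compact. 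Its image $A\triangleq\{z:\exists\lambda\in\mathbb{R}^s,\ (\lambda,z)\in\bigcup_{x\in P(v)}K(x,v)\}$ under the continuous linear projection $(\lambda,z)\mapsto z$ is then compact, and by Carath\'eodory's theorem the convex hull of a compact subset of the finite-dimensional space $\mathbb{R}^m$ is compact, hence closed. Therefore $\overline{\mathrm{co}\,A}=\mathrm{co}\,A$, and combining with the inclusion from \autoref{th_rockafellar1982} gives $\partial q(v)\subseteq\mathrm{co}\,A$, which is the assertion.

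This argument is essentially bookkeeping, so the step that needs the most care is simply confirming that all the hypotheses of the cited results genuinely hold at $v$ --- in particular, extracting nonemptiness and compactness of $S(v)$ from the uniform compactness of $S(\cdot)$ near $v$, which is precisely what \autoref{th_K_compactness} requires so that the closure in Rockafellar's estimate can legitimately be removed.
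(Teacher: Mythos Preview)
Your proposal is correct and follows essentially the same approach as the paper: verify tameness via \autoref{th_uniformly_compact->tame} and $K_0(x,v)=\{(0,0)\}$ via \autoref{th_MF->K_0=0}, apply \autoref{th_rockafellar1982}, then invoke \autoref{th_K_compactness} to remove the closure. You are in fact more careful than the paper's version, which skips the verification that $S(v)$ is nonempty and compact and glosses over the passage from compactness of $\bigcup_{x\in P(v)}K(x,v)$ to closedness of the convex hull of its projection.
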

\begin{proof}
	By \autoref{th_uniformly_compact->tame} and \autoref{th_MF->K_0=0}, the tameness and $K_0(x,v)=\left\lbrace (0,0)\right\rbrace $ is satisfied. The conclusion as in \autoref{th_rockafellar1982} is derived. By \autoref{th_K_compactness}, $\left\lbrace z:\exists \lambda \in\mathbb{R}^s, (\lambda,z)\in  \bigcup_{x\in P(v)}K(x,v)\right\rbrace $ is compact. The closure symbol as in \autoref{th_rockafellar1982} can be removed.
\end{proof}

\section{Verification of viscosity solution}\label{sec_verification of viscosity solution}
In this section, we prove that $V^g$ is the viscosity solution of the HJI PDE in \eqref{HJI}.  First, we estimate the Clark's generalized gradient of $V^g$ in Subsection \ref{subsec_estimate}. By this estimate, we show that $V^g$ is the viscosity subsolution and supersolution of the HJI PDE in Subsection \ref{subsec_subsolution} and \ref{subsec_supersolution}, respectively. We summarize the main results in Subsection \ref{subsec_Summary}.
\subsection{Estimate of the Clark's generalized gradient of $V^g$}\label{subsec_estimate}
The method used in \cite{lee2024solutions} to obtain the Clarke's generalized gradient is not applicable when $g$ is not convex. In this study, we use Rockafellar's results (\autoref{th_Rockafellar & Gauvin}) to estimate the Clark's generalized gradient of $V^g$.

According to the definition, $V^g(\mathbf{y})$ is the optimal value of the following mathematical program for each value of $\mathbf{y}=(x_{P_1}^\top,\dots,x_{P_m}^\top,x_E^\top)^\top$,
\begin{equation}\label{program}
	\begin{split}
		&\min_{x\in \mathbb{R}^n}g(x)\\
		\text{s.t. } &d_i(x;x_{P_i},x_E) \le 0, \\
		&i=1,\dots,m.
	\end{split}
\end{equation}
As $\left\| \cdot \right\| $ is not differentiable at the origin, the program needs to be modified for convenience. Let
\begin{align*}
	\hat{d}_i(x;x_{P_i},x_E)\triangleq&-((x-x_{P_i})^2-\alpha_i^2(x-x_E)^2-l_i^2)^2\\
	&+4\alpha_i^2 l_i^2 (x-x_E)^2,\\
	\bar{d}_i(x;x_{P_i},x_E)\triangleq&-((x-x_{P_i})^2-\alpha_i^2(x-x_E)^2-l_i^2).
\end{align*}
It is obvious that $\hat{d}_i$ and $\bar{d}_i$ are smooth in $\mathbb{R}^n\times\mathbb{R}^n\times\mathbb{R}^n$.
\begin{proposition}\label{constraint_equivalent_2}
	Assume that  $l_i>0$ and $\left\| x_{P_i}-x_E\right\| >l_i$. The following statements are equivalent:\\
	(a) $d_i(x;x_{P_i},x_E) \le 0$;\\
	(b) $\hat{d}_i(x;x_{P_i},x_E)\le 0$ and $\bar{d}_i(x;x_{P_i},x_E)<0$;\\
	(c) $\hat{d}_i(x;x_{P_i},x_E)\le 0$ and $\bar{d}_i(x;x_{P_i},x_E)\le0$.\\
	The following statements are also equivalent:\\
	(d) $d_i(x;x_{P_i},x_E) = 0$;\\
	(e) $\hat{d}_i(x;x_{P_i},x_E)= 0$ and $\bar{d}_i(x;x_{P_i},x_E)<0$;\\
	(f) $\hat{d}_i(x;x_{P_i},x_E)= 0$ and $\bar{d}_i(x;x_{P_i},x_E)\le0$.
\end{proposition}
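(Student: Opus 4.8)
The plan is to collapse all six statements onto a single scalar factorization and then chase signs. Fix $x_{P_i},x_E$ with $\sigma:=\left\|x_{P_i}-x_E\right\|>l_i>0$, fix $x\in\mathbb{R}^n$, and abbreviate $a:=\left\|x-x_{P_i}\right\|\ge 0$, $b:=\left\|x-x_E\right\|\ge 0$ (so $a^2=(x-x_{P_i})^2$ and $b^2=(x-x_E)^2$ in the paper's notation). Introduce
$$A:=a+\alpha_i b+l_i,\qquad C:=a-\alpha_i b+l_i,\qquad D:=a+\alpha_i b-l_i.$$
The key computation I would carry out is the factorization $\hat{d}_i=d_i\,A\,C\,D$: since $\hat{d}_i=(2\alpha_i l_i b)^2-(a^2-\alpha_i^2 b^2-l_i^2)^2$, applying the difference-of-squares identity twice yields $\hat{d}_i=\bigl((\alpha_i b+l_i)^2-a^2\bigr)\bigl(a^2-(\alpha_i b-l_i)^2\bigr)=(d_iA)(CD)$. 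The same two factors give $2\bar{d}_i=d_iA-CD$, and one more line gives $C=2l_i-d_i$; I would record both identities.

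Next I would pin down the two sign facts that make the factorization usable for \emph{every} $x$. First, $A\ge l_i>0$ is immediate. Second, the triangle inequality gives $a+b\ge\sigma$, and $\alpha_i>1$ with $b\ge0$ gives $a+\alpha_i b\ge a+b$, hence $D=a+\alpha_i b-l_i\ge\sigma-l_i>0$. Therefore $AD>0$, so the sign of $\hat{d}_i$ equals the sign of $d_iC$, and $\hat{d}_i=0$ exactly when $d_i=0$ or $C=0$. I would also note the elementary consequence of $C=2l_i-d_i$: if $d_i\le 0$ then $C\ge 2l_i>0$.

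The six implications then reduce to short arithmetic. For (a)$\Rightarrow$(b): $d_i\le 0$ gives $C>0$, so $\hat{d}_i=d_i(ACD)\le 0$ and $2\bar{d}_i=d_iA-CD\le -CD<0$. (b)$\Rightarrow$(c) is trivial. For (c)$\Rightarrow$(a): if $d_i>0$, then $\hat{d}_i\le 0$ with $AD>0$ forces $C\le 0$, whence $CD\le 0$ and $2\bar{d}_i=d_iA-CD\ge d_iA>0$, contradicting $\bar{d}_i\le 0$; so $d_i\le 0$. The equality chain goes the same way: (d)$\Rightarrow$(e) because $d_i=0$ gives $C=2l_i>0$, so $\hat{d}_i=0$ and $2\bar{d}_i=-CD<0$; (e)$\Rightarrow$(f) is trivial; for (f)$\Rightarrow$(d), $\hat{d}_i=0$ with $AD>0$ gives $d_i=0$ or $C=0$, but $C=0$ would force $d_i=2l_i>0$ and then $2\bar{d}_i=d_iA>0$, contradicting $\bar{d}_i\le 0$, so $d_i=0$.

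I do not expect a serious obstacle once the factorization $\hat{d}_i=d_iACD$ is spotted; the points that need care are the degenerate case $C=0$ in the directions ending at (a) and (d), and the fact that the hypothesis $l_i>0$ is genuinely used twice — it is what makes $A>0$ and what lets $C=2l_i-d_i$ separate the region $d_i\le 0$ from $d_i>0$. Geometrically, $\hat{d}_i=0$ cuts out the whole Cartesian oval (both branches), and the sign conditions $C\ge 0$ / $\bar{d}_i\le 0$ pick out the single branch that is the true boundary $\partial\mathcal{D}_i$; making that selection rigorous is the real content here.
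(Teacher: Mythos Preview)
Your proof is correct, and it takes a genuinely different route from the paper's. The paper proceeds by direct inequality manipulation: it rewrites $d_i\le 0$ as $-(a^2-\alpha_i^2 b^2-l_i^2)\le -2\alpha_i l_i b$ (in your notation), argues the strict sign of $\bar d_i$ separately, then squares to reach $\hat d_i\le 0$; for (c)$\Rightarrow$(a) it unwinds the same squaring using the sign of $\bar d_i$ to remove the absolute value. Your approach instead packages everything into the single factorization $\hat d_i=d_i\,A\,C\,D$ together with $2\bar d_i=d_iA-CD$ and $C=2l_i-d_i$, and then reduces the six implications to sign-chasing once you establish $A>0$ and $D>0$ via the triangle inequality. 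This is cleaner and more transparent: the factorization makes explicit that $\hat d_i=0$ cuts out both branches of the Cartesian oval and that the auxiliary constraint $\bar d_i\le 0$ selects the correct one, while also making the role of the hypotheses $l_i>0$ and $\sigma>l_i$ visible in exactly the two places they are needed. The paper's argument buys directness at the cost of that structural insight (and in fact its line ``$x=x_E$ does not satisfy (a)'' is a slip---$d_i(x_E)=l_i-\sigma<0$ does satisfy (a)---though the conclusion $\bar d_i<0$ still holds there for a different reason).
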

\begin{proof}
	We only prove that (a), (b) and (c) are equivalent. It is easy to obtain 
	\begin{equation}\label{eq_constraint_equivalent}
		\begin{split}
			&-\left\| x-x_{P_i}\right\|+\alpha_i \left\| x-x_E \right\| + l_i \le 0\\
			\iff& \alpha_i \left\| x-x_E \right\| + l_i \le \left\| x-x_{P_i}\right\| \\
			\iff& \alpha_i^2(x-x_E)^2+l_i^2 + 2 \alpha_i l_i \left\| x-x_E \right\| \le (x-x_{P_i})^2 \\
			\iff&-((x-x_{P_i})^2-\alpha_i^2(x-x_E)^2-l_i^2)\\
				&\le -2 \alpha_i l_i \left\| x-x_E \right\|.
		\end{split}
	\end{equation}
	Let us prove that (a) implies (b). $x=x_E$ does not satisfy (a). Thus, from the last inequality in \eqref{eq_constraint_equivalent}, (a) implies $-((x-x_{P_i})^2-\alpha_i^2(x-x_E)^2-l_i^2)<0$. Squaring on both sides of the last inequality of \eqref{eq_constraint_equivalent} and reorganizing it, we obtain $-((x-x_{P_i})^2-\alpha_i^2(x-x_E)^2-l_i^2)^2+4\alpha_i^2 l_i^2 (x-x_E)^2\le 0$.\\
	Obviously, (b) implies (c).\\
	Let us prove that (c) implies (a).
	\begin{align*}
		&\hat{d}_i(x;x_{P_i},x_E)\le 0\\
		\iff& 4\alpha_i^2 l_i^2 (x-x_E)^2\le ((x-x_{P_i})^2-\alpha_i^2(x-x_E)^2-l_i^2)^2 \\
		\iff& 2 \alpha_i l_i \left\| x-x_E \right\| \le \left| (x-x_{P_i})^2-\alpha_i^2(x-x_E)^2-l_i^2 \right| . 
	\end{align*}
	Due to $-((x-x_{P_i})^2-\alpha_i^2(x-x_E)^2-l_i^2)\le 0$, we obtain $2 \alpha_i l_i \left\| x-x_E \right\| \le  (x-x_{P_i})^2-\alpha_i^2(x-x_E)^2-l_i^2 $. From \eqref{eq_constraint_equivalent}, we obtain (a). \\
	The proof of the equivalence of (d), (e) and (f) is similar.
\end{proof}

From \autoref{constraint_equivalent_2}, the mathematical program \eqref{program} is equivalent to the following program,
\begin{equation}\label{modified_program}
	\begin{split}
		&\min_{x\in \mathbb{R}^n}g(x)\\
		\text{s.t. } &\hat{d}_i(x;x_{P_i},x_E)\le 0,\; i=1,\dots,m,\\
		&\bar{d}_i(x;x_{P_i},x_E)\le 0,\;i=1,\dots,m.
	\end{split}
\end{equation}
$V^g(\mathbf{y})$ is the optimal value of the program for each value of $\mathbf{y}=(x_{P_1}^\top,\dots,x_{P_m}^\top,x_E^\top)^\top$, which can be regarded as a marginal function\cite{gauvin1982differential}. The set of feasible solutions is $\mathcal{D}^*(\mathbf{y})$. 
The set  of optimal solutions is
\begin{align*}
	\mathcal{P}(\mathbf{y})=\left\lbrace x\in\mathcal{D}^*(\mathbf{y}):g(x)=V^g(\mathbf{y})\right\rbrace. 
\end{align*}
For any optimal point $x\in\mathcal{P}(\mathbf{y})$, let $\mathcal{K}(x,\mathbf{y})$ be the set of $(\lambda_1,\dots,\lambda_m,\mu_1,\dots,\mu_m)\in \mathbb{R}^{2m}$ such that
\begin{subequations}\label{K-T}
	\begin{align}
		\lambda_i \ge 0,\ \mu_i \ge 0\ i=1,\dots,m,\label{lagrange_nonnegtive_1}\\
		\lambda_i \hat{d}_i (x;x_{P_i},x_E)=0,\ i=1,\dots,m,\\
		\mu_i \bar{d}_i (x;x_{P_i},x_E)=0,\ i=1,\dots,m,\label{complementary slackness mu 1}\\
		\begin{matrix}
			0\in \partial_x g(x)+\sum_{i=1}^m \lambda_i D_x \hat{d}_i (x;x_{P_i},x_E)\\[6pt]
			+\sum_{i=1}^m \mu_i D_x \bar{d}_i (x;x_{P_i},x_E).
		\end{matrix} 
	\end{align}
\end{subequations}
By \eqref{lagrange_nonnegtive_1}\eqref{complementary slackness mu 1} and ``(b)$\iff$(c)'' in \autoref{constraint_equivalent_2}, $\forall (\lambda_1,\dots,\lambda_m,\mu_1,\dots,\mu_m)\in\mathcal{K}(x,\mathbf{y})$, $\forall i\in[m]$, $\mu_i=0$.

\begin{proposition}\label{th_M-F}
	Consider the program \eqref{modified_program}. Assume that $\alpha_i>1$, $l_i> 0, i=1,\dots,m$. Then, for any $\mathbf{y}\in\Omega$, for any $x\in \mathcal{P}(\mathbf{y})$, $x$ is Mangasarian-Fromovitz regular.
\end{proposition}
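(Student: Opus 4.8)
First I would observe that Mangasarian–Fromovitz regularity for \eqref{modified_program} really only concerns the constraints of type $\hat d_i$, because the constraints $\bar d_i(\cdot;x_{P_i},x_E)\le 0$ are never active. Indeed, if $x$ is feasible for \eqref{modified_program} then $\hat d_i(x;x_{P_i},x_E)\le 0$ and $\bar d_i(x;x_{P_i},x_E)\le 0$ for every $i$, which is statement (c) of \autoref{constraint_equivalent_2}; by the equivalence ``(b)$\iff$(c)'' this forces $\bar d_i(x;x_{P_i},x_E)<0$. Hence for $x\in\mathcal P(\mathbf y)$ the active index set is $I=\left\lbrace i\in[m]:\hat d_i(x;x_{P_i},x_E)=0\right\rbrace$, and if $I=\emptyset$ there is nothing to prove. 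For $i\in I$, combining $\hat d_i(x;x_{P_i},x_E)=0$ with $\bar d_i(x;x_{P_i},x_E)<0$ and the equivalence ``(d)$\iff$(e)'' of \autoref{constraint_equivalent_2} gives $d_i(x;x_{P_i},x_E)=0$, i.e.\ $x\in\partial\mathcal D_i(x_{P_i},x_E)$; in particular $\left\|x-x_{P_i}\right\|=\alpha_i\left\|x-x_E\right\|+l_i\ge l_i>0$. Moreover $x\ne x_E$, since $d_i(x_E;x_{P_i},x_E)=l_i-\left\|x_{P_i}-x_E\right\|<0$ (because $\mathbf y\in\Omega$), so $x_E$ lies in the interior of $\mathcal D_i(x_{P_i},x_E)$ and cannot be on its boundary.

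Next I would compute $D_x\hat d_i(x;x_{P_i},x_E)$ at a boundary point $x\in\partial\mathcal D_i(x_{P_i},x_E)$. Writing $c\triangleq\left\|x-x_{P_i}\right\|^2-\alpha_i^2\left\|x-x_E\right\|^2-l_i^2$, a direct differentiation of $\hat d_i=-c^2+4\alpha_i^2 l_i^2\left\|x-x_E\right\|^2$ yields
\begin{align*}
	D_x\hat d_i(x;x_{P_i},x_E)=-4c\,(x-x_{P_i})+4\alpha_i^2\bigl(c+2l_i^2\bigr)(x-x_E).
\end{align*}
On $\partial\mathcal D_i(x_{P_i},x_E)$ the identity $\left\|x-x_{P_i}\right\|=\alpha_i\left\|x-x_E\right\|+l_i$ gives $c=2\alpha_i l_i\left\|x-x_E\right\|$ and $c+2l_i^2=2l_i\left\|x-x_{P_i}\right\|$, so after substitution
\begin{align*}
	D_x\hat d_i(x;x_{P_i},x_E)=8\alpha_i l_i\left\|x-x_{P_i}\right\|\left\|x-x_E\right\|\left(\alpha_i\frac{x-x_E}{\left\|x-x_E\right\|}-\frac{x-x_{P_i}}{\left\|x-x_{P_i}\right\|}\right),
\end{align*}
where the scalar prefactor $8\alpha_i l_i\left\|x-x_{P_i}\right\|\left\|x-x_E\right\|$ is strictly positive. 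This boundary simplification is the only computation of any length in the argument, and it is the step I would watch most carefully.

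Finally I would exhibit a single direction that works for all active constraints at once: take $r=-(x-x_E)/\left\|x-x_E\right\|$, which is well defined because $x\ne x_E$ and, crucially, does not depend on $i$ since the evader position $x_E$ is common to all constraints. For every $i\in I$, using the displayed form of the gradient together with the Cauchy–Schwarz bound $(x-x_{P_i})^\top(x-x_E)\le\left\|x-x_{P_i}\right\|\left\|x-x_E\right\|<\alpha_i\left\|x-x_{P_i}\right\|\left\|x-x_E\right\|$ (the last inequality because $\alpha_i>1$), one gets $D_x\hat d_i(x;x_{P_i},x_E)^\top r<0$. Thus the Mangasarian–Fromovitz condition \eqref{eq_M-F_condition} holds with this $r$, so $x$ is Mangasarian–Fromovitz regular. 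I do not anticipate a genuine obstacle here; the only points needing care are justifying $x\ne x_E$ so that $r$ is defined, and carrying out the gradient simplification on $\partial\mathcal D_i(x_{P_i},x_E)$ correctly — everything else is bookkeeping with \autoref{constraint_equivalent_2}.
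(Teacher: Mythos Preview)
Your proof is correct and follows essentially the same line as the paper's (supplementary) argument: one observes via \autoref{constraint_equivalent_2} that the $\bar d_i$ constraints are never active, computes $D_x\hat d_i$ on $\partial\mathcal D_i(x_{P_i},x_E)$ exactly as you did, and then verifies the Mangasarian--Fromovitz condition by taking the direction $r$ pointing from $x$ toward the common interior point $x_E$. Your gradient simplification and the Cauchy--Schwarz step are both right, and the justification that $x\ne x_E$ is the correct way to ensure $r$ is well defined.
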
 
The proof is presented in the supplementary material.

\begin{proposition}\label{th_uniform_compactness}
	Assume that $\alpha_i>1$, $l_i\ge 0, i=1,\dots,m$. Then, the point to set mapping $\mathcal{D}^*(\mathbf{y})$ is uniformly compact near any point of $\Omega$.
\end{proposition}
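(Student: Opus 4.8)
The plan is to bound $\mathcal{D}^*(\mathbf{y})$ uniformly in a small ball around an arbitrary point $\bar{\mathbf{y}}\in\Omega$, using only one of the sets $\mathcal{D}_i$. Since $\mathcal{D}^*(\mathbf{y})\subseteq\mathcal{D}_1(x_{P_1},x_E)$ for every $\mathbf{y}$, it suffices to find a single radius $C$ such that $\mathcal{D}_1(x_{P_1},x_E)\subseteq\{x\in\mathbb{R}^n:\|x\|\le C\}$ for all $\mathbf{y}$ in a neighborhood of $\bar{\mathbf{y}}$; then $\bigcup_{\mathbf{y}}\mathcal{D}^*(\mathbf{y})$ lies in that compact ball, so its closure — a closed subset of a compact set — is compact, which is exactly uniform compactness near $\bar{\mathbf{y}}$.

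First I would record the elementary inclusion coming from the triangle inequality. If $x\in\mathcal{D}_i(x_{P_i},x_E)$, then $\alpha_i\|x-x_E\|+l_i\le\|x-x_{P_i}\|\le\|x-x_E\|+\|x_{P_i}-x_E\|$, so $(\alpha_i-1)\|x-x_E\|\le\|x_{P_i}-x_E\|-l_i$, and since $\alpha_i>1$,
\[
\mathcal{D}_i(x_{P_i},x_E)\subseteq\{x\in\mathbb{R}^n:(\alpha_i-1)\|x-x_E\|\le\|x_{P_i}-x_E\|-l_i\}.
\]
In particular every $x\in\mathcal{D}_i(x_{P_i},x_E)$ satisfies $\|x\|\le\|x_E\|+(\|x_{P_i}-x_E\|-l_i)/(\alpha_i-1)$. (Incidentally, this is a short alternative route to \autoref{th C-oval boundedness}.)

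Then, fixing $\bar{\mathbf{y}}=(\bar x_{P_1}^\top,\dots,\bar x_{P_m}^\top,\bar x_E^\top)^\top\in\Omega$ and using that $\Omega$ is open, I would choose $\delta_0>0$ with $B(\bar{\mathbf{y}},\delta_0)\subseteq\Omega$; this guarantees $\|x_{P_i}-x_E\|>l_i$ for all $i$ and all $\mathbf{y}\in B(\bar{\mathbf{y}},\delta_0)$, so $\mathcal{D}^*(\mathbf{y})$ is well defined and $\mathcal{D}^*(\mathbf{y})\subseteq\mathcal{D}_1(x_{P_1},x_E)$. On this ball the maps $\mathbf{y}\mapsto\|x_E\|$ and $\mathbf{y}\mapsto\|x_{P_1}-x_E\|$ are continuous, hence bounded (e.g. $\|x_E\|<\|\bar x_E\|+\delta_0$ and $\|x_{P_1}-x_E\|<\|\bar x_{P_1}-\bar x_E\|+2\delta_0$), so the previous estimate yields a finite constant $C=C(\bar{\mathbf{y}},\delta_0)$ with $\|x\|\le C$ for every $x\in\mathcal{D}_1(x_{P_1},x_E)$ and every $\mathbf{y}\in B(\bar{\mathbf{y}},\delta_0)$. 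Therefore $\bigcup_{\mathbf{y}\in B(\bar{\mathbf{y}},\delta_0)}\mathcal{D}^*(\mathbf{y})\subseteq\{x\in\mathbb{R}^n:\|x\|\le C\}$, which is compact, so the closure of the union is compact; as $\bar{\mathbf{y}}\in\Omega$ was arbitrary, $\mathcal{D}^*$ is uniformly compact near every point of $\Omega$.

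I do not anticipate a genuine obstacle: the only points needing care are shrinking the neighborhood so that it remains inside $\Omega$ — keeping the strict inequality $\|x_{P_i}-x_E\|>l_i$, which is what makes the bounding radius finite and positive — and then reading off the uniform constant $C$ from the continuity of the two elementary norm expressions in $\mathbf{y}$.
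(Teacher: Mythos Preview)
Your proof is correct and follows essentially the same route as the paper: the key estimate is the triangle-inequality bound $\|x-x_E\|\le(\|x_{P_i}-x_E\|-l_i)/(\alpha_i-1)$ for $x\in\mathcal{D}_i(x_{P_i},x_E)$, from which uniform compactness follows by continuity on a ball inside $\Omega$. The paper records this bound as $M(\mathbf{y})=\min_i(\|x_{P_i}-x_E\|-l_i)/(\alpha_i-1)$ (it is quoted and reused in the proof of \autoref{th_sum_viscosity}), whereas you use only $i=1$; both suffice for the present proposition.
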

The proof is presented in the supplementary material.

Below is the estimate of the Clark's generalized gradient of $V^g$ in $\Omega$.
\begin{proposition}\label{estimate of generalized gradient}
	Assume that $\alpha_i>1$, $l_i>0$ for any $i\in[m]$ and $g:\mathbb{R}^n\rightarrow \mathbb{R}$ is locally Lipschitz continuous on $\mathbb{R}^n$. Then,
	$V^g$ is locally Lipschitz continuous on $\Omega$, and for any $\mathbf{y}=(x_{P_1}^\top,\dots,x_{P_m}^\top,x_E^\top)^\top\in\Omega$,
	\begin{align}\label{eq estimate of generalized gradient}
		\partial V^g (\mathbf{y})\subseteq \mathrm{co} \hat{\mathcal{Q}}(\mathbf{y}),
	\end{align}
	where $\hat{\mathcal{Q}}(\mathbf{y})$ is the set of  $\mathbf{p}=(p_{P_1}^\top,\dots,p_{P_m}^\top,p_E^\top)^\top$ such that $\exists x\in \mathcal{P}(\mathbf{y})$, $\exists (\lambda_1,\dots,\lambda_m)\in \mathbb{R}^m$,
	\begin{align*}
		\lambda_i \ge 0,\ i=1,\dots,m,\\
		\lambda_i \hat{d}_i (x;x_{P_i},x_E)=0,\ i=1,\dots,m,\\
		0\in \partial_x g(x)+\sum_{i=1}^m \lambda_i D_x \hat{d}_i (x;x_{P_i},x_E) 
	\end{align*}
	(i.e. $(\lambda_1,\dots,\lambda_m,0,\dots,0)\in\mathcal{K}(x,\mathbf{y})$) and
	\begin{align*}
		&p_{P_i}=8\alpha_i l_i \lambda_i \left\| x-x_E\right\| (x-x_{P_i}), i=1,\dots,m,\\
		&p_{E}=-\sum_{i=1}^m 8\alpha_i^2 l_i \lambda_i \left\| x-x_{P_i}\right\| (x-x_E).
	\end{align*}
\end{proposition}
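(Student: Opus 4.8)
The plan is to apply \autoref{th_Rockafellar & Gauvin} to the parametric program \eqref{modified_program}, identifying the parameter vector with $\mathbf{y}=(x_{P_1}^\top,\dots,x_{P_m}^\top,x_E^\top)^\top$, the objective $f_0(x,\mathbf{y})=g(x)$, and the $2m$ constraint functions $f_i(x,\mathbf{y})=\hat d_i(x;x_{P_i},x_E)$ and $f_{m+i}(x,\mathbf{y})=\bar d_i(x;x_{P_i},x_E)$, $i\in[m]$. These hypotheses have already been secured in the paper: $g$ is locally Lipschitz by assumption and the $\hat d_i,\bar d_i$ are smooth (hence continuously differentiable), \autoref{th_uniform_compactness} gives that $\mathcal{D}^*(\mathbf{y})=S(\mathbf{y})$ is uniformly compact near every point of $\Omega$, and \autoref{th_M-F} gives that every $x\in\mathcal{P}(\mathbf{y})$ is Mangasarian--Fromovitz regular. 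Thus \autoref{th_Rockafellar & Gauvin} applies and yields both that $V^g=q$ is locally Lipschitz near each $\mathbf{y}\in\Omega$ and the outer estimate
\[
	\partial V^g(\mathbf{y})\subseteq \mathrm{co}\left\lbrace z:\exists\lambda\in\mathbb{R}^{2m},\ (\lambda,z)\in\bigcup_{x\in\mathcal{P}(\mathbf{y})}\mathcal{K}(x,\mathbf{y})\right\rbrace,
\]
where $\mathcal{K}(x,\mathbf{y})$ is exactly the set defined in \eqref{K-T} (the $z$-component there being the gradient in the parameter $\mathbf{y}$).

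Next I would carry out the explicit computation of the $z$-component for $(\lambda_1,\dots,\lambda_m,\mu_1,\dots,\mu_m)\in\mathcal{K}(x,\mathbf{y})$. First, as already observed right after \eqref{K-T}, the equivalence ``(b)$\iff$(c)'' in \autoref{constraint_equivalent_2} forces $\mu_i=0$ for all $i$ at any feasible optimum (since $\bar d_i<0$ strictly whenever $d_i\le0$), so the $\bar d_i$-multipliers drop out and one is left with $(\lambda_1,\dots,\lambda_m,0,\dots,0)\in\mathcal{K}(x,\mathbf{y})$, which is precisely the condition listed in the statement. Then, since $g$ does not depend on $\mathbf{y}$ and $\hat d_i$ depends on $\mathbf{y}$ only through $(x_{P_i},x_E)$, the parameter-gradient $z$ has blocks $p_{P_i}=\sum_i\lambda_i D_{x_{P_i}}\hat d_i(x;x_{P_i},x_E)=\lambda_i D_{x_{P_i}}\hat d_i$ and $p_E=\sum_i\lambda_i D_{x_E}\hat d_i$. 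It remains only to differentiate
\[
	\hat d_i(x;x_{P_i},x_E)=-\bigl((x-x_{P_i})^2-\alpha_i^2(x-x_E)^2-l_i^2\bigr)^2+4\alpha_i^2 l_i^2(x-x_E)^2
\]
with respect to $x_{P_i}$ and $x_E$; a routine chain-rule computation, using that on an optimal point with $\lambda_i>0$ one has $\hat d_i=0$, i.e. $(x-x_{P_i})^2-\alpha_i^2(x-x_E)^2-l_i^2=\pm 2\alpha_i l_i\|x-x_E\|$ with the sign fixed (positive, by the argument in \autoref{constraint_equivalent_2}, so this scalar equals $2\alpha_i l_i\|x-x_E\|$), collapses the expressions to $p_{P_i}=8\alpha_i l_i\lambda_i\|x-x_E\|(x-x_{P_i})$ and $p_E=-\sum_{i=1}^m 8\alpha_i^2 l_i\lambda_i\|x-x_{P_i}\|(x-x_E)$. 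For indices with $\lambda_i=0$ the corresponding block is zero, consistent with the formulas. This identifies the set $\{z:\dots\}$ above with $\hat{\mathcal Q}(\mathbf{y})$, and taking convex hulls gives \eqref{eq estimate of generalized gradient}.

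I expect the main obstacle to be purely bookkeeping rather than conceptual: verifying that the hypotheses of \autoref{th_Rockafellar & Gauvin} are met is immediate given \autoref{th_M-F} and \autoref{th_uniform_compactness}, so the real work is (i) cleanly justifying the elimination of the $\mu_i$ multipliers via \autoref{constraint_equivalent_2} so that the stationarity inclusion in \eqref{K-T} reduces to the three-line condition in the statement, and (ii) carrying the chain-rule differentiation of $\hat d_i$ far enough to recognize the simplification afforded by $\hat d_i(x;x_{P_i},x_E)=0$ on the active set — one must be careful to use the \emph{signed} factorization $(x-x_{P_i})^2-\alpha_i^2(x-x_E)^2-l_i^2=2\alpha_i l_i\|x-x_E\|$ rather than its absolute value, which is exactly what \autoref{constraint_equivalent_2} supplies, and to note $\|x-x_{P_i}\|=\alpha_i\|x-x_E\|+l_i$ on $\partial\mathcal{D}_i$ so the two blocks take the stated homogeneous form. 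No subtlety beyond this should arise; the local Lipschitz continuity of $V^g$ is handed over directly by \autoref{th_Rockafellar & Gauvin}.
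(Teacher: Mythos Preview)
Your proposal is correct and follows essentially the same route as the paper: invoke \autoref{th_Rockafellar & Gauvin} after checking its hypotheses via \autoref{th_M-F} and \autoref{th_uniform_compactness}, eliminate the $\mu_i$ multipliers through \autoref{constraint_equivalent_2}, and then simplify the explicit partial derivatives of $\hat d_i$ on the active set using $\hat d_i=0$ together with $\|x-x_{P_i}\|=\alpha_i\|x-x_E\|+l_i$. The only minor slip is notational: the paper's $\mathcal{K}(x,\mathbf{y})$ in \eqref{K-T} contains only the multiplier tuples, not pairs $(\lambda,z)$; the paper introduces a separate set $\mathcal{Q}(\mathbf{y})$ for the parameter-gradients $z$ and then shows $\mathcal{Q}(\mathbf{y})=\hat{\mathcal{Q}}(\mathbf{y})$, using \autoref{th_Clarke_linear_combination} to split $\partial_{(x,\mathbf{y})}[g+\sum\lambda_i\hat d_i]$ into $\partial_x g(x)\times\{0\}$ plus the smooth part --- a step you gloss over but which is straightforward.
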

\begin{proof} 
	The Mangasarian-Fromovitz regularity condition and uniform compactness have been shown in \autoref{th_M-F} and \autoref{th_uniform_compactness} respectively.
	According to \autoref{th_Rockafellar & Gauvin}, we obtain that $V^g$ is locally Lipschitz continuous in $\Omega$ and for any $\mathbf{y}\in \Omega$,
	\begin{align}
		\partial V^g (\mathbf{y})\subseteq \mathrm{co} \mathcal{Q}(\mathbf{y}),
	\end{align}
	where $\mathcal{Q}(\mathbf{y})$ is the set of $\mathbf{p}=(p_{P_1}^\top,\dots,p_{P_m}^\top,p_E^\top)^\top$ such that $\exists x\in \mathcal{P}(\mathbf{y})$, $\exists (\lambda_1,\dots,\lambda_m,\mu_1,\dots\mu_m)\in \mathbb{R}^{2m}$
	\begin{subequations}
		\begin{align}
			\lambda_i \ge 0,\ \mu_i \ge 0,\ i=1,\dots,m,\label{lagrange_nonnegtive}\\
			\lambda_i \hat{d}_i (x;x_{P_i},x_E)=0,\ i=1,\dots,m,\\
			\mu_i \bar{d}_i (x;x_{P_i},x_E)=0,\ i=1,\dots,m,\label{complementary slackness mu}\\
			(0^\top,\mathbf{p}^\top)^\top
			\in \partial_{(x,\mathbf{y})} \left[ g+\sum_{i=1}^m \left( \lambda_i\hat{d}_i+\mu_i\bar{d}_i\right) \right] (x,\mathbf{y}).\label{eq}
		\end{align}
	\end{subequations}
	By \eqref{lagrange_nonnegtive}\eqref{complementary slackness mu} and ``(b)$\iff$(c)'' in \autoref{constraint_equivalent_2}, $\mu_i=0,\ \forall i\in[m]$.
	According to \autoref{th_Clarke_linear_combination},
	\begin{align*}
		&\partial_{(x,\mathbf{y})} \left[ g+\sum_{i=1}^m \left( \lambda_i\hat{d}_i+\mu_i\bar{d}_i\right) \right] (x,\mathbf{y})\\
		=&\partial_{(x,\mathbf{y})} g(x,\mathbf{y})+\sum_{i=1}^m D_{(x,\mathbf{y})} \left( \lambda_i\hat{d}_i+\mu_i\bar{d}_i\right) (x,\mathbf{y}).
	\end{align*}
	Since the function $g$ does not depend on the variable $\mathbf{y}$,
	\begin{align*}
		\partial_{(x,\mathbf{y})} g(x,\mathbf{y})=\partial_x g(x)\times\left\lbrace 0 \right\rbrace .
	\end{align*}
	Thus, \eqref{eq} is converted to 
	\begin{align*}
		0\in \partial_x g(x)+\sum_{i=1}^m \lambda_i D_x \hat{d}_i (x;x_{P_i},x_E)
	\end{align*}
	and
	\begin{align*}
		&p_{P_i}=\lambda_i D_{x_{P_i}}\hat{d}_i(x;x_{P_i},x_E), i=1,\dots,m,\\
		&p_{E}=\sum_{i=1}^m\lambda_i D_{x_E}\hat{d}_i(x;x_{P_i},x_E).
	\end{align*}
	Consider $\lambda_i D_{x_{P_i}}\hat{d}_i(x;x_{P_i},x_E)$ and $\lambda_i D_{x_E}\hat{d}_i(x;x_{P_i},x_E)$,
	\begin{equation}\label{sssss}
		\begin{aligned}
			&\lambda_i D_{x_{P_i}}\hat{d}_i(x;x_{P_i},x_E)\\
			=&4\lambda_i((x-x_{P_i})^2-\alpha_i^2(x-x_E)^2-l_i^2)(x-x_{P_i}),\\
			&\lambda_i D_{x_E}\hat{d}_i(x;x_{P_i},x_E)\\
			=&-4\alpha_i^2 \lambda_i((x-x_{P_i})^2-\alpha_i^2(x-x_E)^2-l_i^2+2l_i^2)(x-x_E).
		\end{aligned}
	\end{equation}
	When $\lambda_i=0$, 
	\begin{align*}
		&\lambda_i D_{x_{P_i}}\hat{d}_i(x;x_{P_i},x_E)=0=8\alpha_i l_i \lambda_i \left\| x-x_E\right\| (x-x_{P_i}),\\
		&\lambda_i D_{x_{E}}\hat{d}_i(x;x_{P_i},x_E)=0=-8\alpha_i^2 l_i \lambda_i \left\| x-x_{P_i}\right\| (x-x_E).
	\end{align*}
	When $\lambda_i>0$, by \eqref{K-T}, we obtain $\hat{d}_i(x;x_{P_i},x_E)=0$, then
	\begin{align*}
		&-((x-x_{P_i})^2-\alpha_i^2(x-x_E)^2-l_i^2)^2+4\alpha_i^2 l_i^2 (x-x_E)^2=0,\\
		&\left\| x-x_{P_i}\right\| -\alpha_i\left\| x-x_E\right\| -l_i=0.
	\end{align*}
	Substituting them into \eqref{sssss}, we obatin
	\begin{align*}
		&\lambda_i D_{x_{P_i}}\hat{d}_i(x;x_{P_i},x_E)=8\alpha_i l_i \lambda_i \left\| x-x_E\right\| (x-x_{P_i}),\\
		&\lambda_i D_{x_{E}}\hat{d}_i(x;x_{P_i},x_E)=-8\alpha_i^2 l_i \lambda_i \left\| x-x_{P_i}\right\| (x-x_E).
	\end{align*}
	Then $\hat{\mathcal{Q}}(\mathbf{y})=\mathcal{Q}(\mathbf{y})$. The proof is completed.
\end{proof}

\subsection{Verification of viscosity subsolution}\label{subsec_subsolution}
\begin{proposition}\label{th sub}
	Assume that $\alpha_i>1$, $l_i>0$ for any $i\in[m]$ and $g:\mathbb{R}^n\rightarrow \mathbb{R}$ is locally Lipschitz continuous on $\mathbb{R}^n$. Then $\forall\mathbf{y}\in\Omega$, $\forall \mathbf{p}=(p_{P_1}^\top,\dots,p_{P_m}^\top,p_E^\top)^\top \in \partial V^g(\mathbf{y}),$
	\begin{align}\label{eq sub}
		-\sum_{i=1}^m\alpha_i\left\| {p}_{P_i}\right\| +\left\| {p}_E\right\| \le 0.
	\end{align}
\end{proposition}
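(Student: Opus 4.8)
The plan is to combine the outer estimate $\partial V^g(\mathbf y)\subseteq\mathrm{co}\,\hat{\mathcal Q}(\mathbf y)$ from \autoref{estimate of generalized gradient} with the geometric inequality of \autoref{key_C-oval_property}. One should first notice that the map $\mathbf p\mapsto-\sum_{i=1}^m\alpha_i\|p_{P_i}\|+\|p_E\|$ is \emph{not} convex, and its sublevel set $\{\mathbf p:-\sum_i\alpha_i\|p_{P_i}\|+\|p_E\|\le0\}$ is not convex either; hence one cannot simply check \eqref{eq sub} on the generators of $\hat{\mathcal Q}(\mathbf y)$ and pass to the convex hull. Instead I would carry the convex-combination structure through the estimate and use the special algebraic form of the generators.

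Concretely, fix $\mathbf y\in\Omega$ and $\mathbf p\in\partial V^g(\mathbf y)$, and write $\mathbf p=\sum_{j=1}^N t_j\mathbf p^{(j)}$ as a finite convex combination of generators $\mathbf p^{(j)}\in\hat{\mathcal Q}(\mathbf y)$, each associated with an optimal point $x^{(j)}\in\mathcal P(\mathbf y)$ and multipliers $\lambda_i^{(j)}\ge0$ obeying $\lambda_i^{(j)}\hat d_i(x^{(j)};x_{P_i},x_E)=0$ and
\begin{align*}
p_{P_i}^{(j)}&=8\alpha_i l_i\lambda_i^{(j)}\|x^{(j)}-x_E\|\,(x^{(j)}-x_{P_i}),\\
p_E^{(j)}&=-\sum_{i=1}^m 8\alpha_i^2 l_i\lambda_i^{(j)}\|x^{(j)}-x_{P_i}\|\,(x^{(j)}-x_E).
\end{align*}
Since $\mathbf y\notin\mathcal T$ gives $\|x_{P_i}-x_E\|>l_i>0$, we have $x_{P_i}\notin\mathcal D_i(x_{P_i},x_E)$ and $x_E\in\mathcal D_i(x_{P_i},x_E)^\circ$; because $x^{(j)}\in\mathcal D^*(\mathbf y)\subseteq\mathcal D_i(x_{P_i},x_E)$, this forces $x^{(j)}\neq x_{P_i}$, and $\lambda_i^{(j)}>0$ forces $x^{(j)}\in\partial\mathcal D_i(x_{P_i},x_E)$, hence $x^{(j)}\neq x_E$. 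Discarding the generators with $\mathbf p^{(j)}=0$ (if none remains then $\mathbf p=0$ and \eqref{eq sub} is immediate), I may assume every retained $j$ has $x^{(j)}\neq x_E$; then put $u_i^{(j)}:=\frac{x^{(j)}-x_{P_i}}{\|x^{(j)}-x_{P_i}\|}$, $v^{(j)}:=\frac{x^{(j)}-x_E}{\|x^{(j)}-x_E\|}$ and $c_i^{(j)}:=8 l_i\lambda_i^{(j)}\|x^{(j)}-x_E\|\,\|x^{(j)}-x_{P_i}\|\ge0$, so that $p_{P_i}^{(j)}=\alpha_i c_i^{(j)}u_i^{(j)}$ and $p_E^{(j)}=-\big(\sum_{i=1}^m\alpha_i^2 c_i^{(j)}\big)v^{(j)}$, and therefore $p_{P_i}=\alpha_i w_i$ and $p_E=-\sum_{i=1}^m\alpha_i^2 z_i$ with $w_i:=\sum_j t_j c_i^{(j)}u_i^{(j)}$ and $z_i:=\sum_j t_j c_i^{(j)}v^{(j)}$.

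The key step I would then establish is $\|w_i\|\ge\|z_i\|$ for each $i$. Expanding, $\|w_i\|^2=\sum_{j,k}t_j t_k c_i^{(j)}c_i^{(k)}(u_i^{(j)})^\top u_i^{(k)}$ and $\|z_i\|^2=\sum_{j,k}t_j t_k c_i^{(j)}c_i^{(k)}(v^{(j)})^\top v^{(k)}$; I compare these termwise. Whenever $c_i^{(j)}c_i^{(k)}>0$ we have $\lambda_i^{(j)},\lambda_i^{(k)}>0$, so $x^{(j)},x^{(k)}\in\partial\mathcal D_i(x_{P_i},x_E)$ and \autoref{key_C-oval_property} yields $(u_i^{(j)})^\top u_i^{(k)}\ge(v^{(j)})^\top v^{(k)}$ (an equality when $j=k$), whereas if $c_i^{(j)}c_i^{(k)}=0$ both summands vanish; since $t_j t_k c_i^{(j)}c_i^{(k)}\ge0$ this gives $\|w_i\|^2\ge\|z_i\|^2$. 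The proof then closes with the triangle inequality:
\begin{align*}
\sum_{i=1}^m\alpha_i\|p_{P_i}\|&=\sum_{i=1}^m\alpha_i^2\|w_i\|\ge\sum_{i=1}^m\alpha_i^2\|z_i\|\\
&\ge\Big\|\sum_{i=1}^m\alpha_i^2 z_i\Big\|=\|p_E\|,
\end{align*}
which is exactly \eqref{eq sub}.

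I expect the only real obstacle to be the bookkeeping that makes \autoref{key_C-oval_property} applicable: recognizing that $p_{P_i}^{(j)}$ and $p_E^{(j)}$ both factor through the \emph{same} unit vectors $u_i^{(j)}$ and $v^{(j)}$ up to the nonnegative weights $c_i^{(j)}$, and that \autoref{key_C-oval_property} is precisely the inner-product inequality preserved under passing to convex combinations. Checking that the active optimizers $x^{(j)}$ avoid $x_E$ and $x_{P_i}$ (so the unit vectors are defined) and the two norm estimates above are then routine.
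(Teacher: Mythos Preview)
Your proposal is correct and follows essentially the same approach as the paper's proof: represent $\mathbf p$ as a convex combination of generators in $\hat{\mathcal Q}(\mathbf y)$ via \autoref{estimate of generalized gradient} and Carath\'eodory, split $p_E$ into per-pursuer pieces (your $\alpha_i^2 z_i$ is exactly the paper's $\alpha_i p_E^{(i)}$), compare $\|p_{P_i}\|^2$ against $\|\alpha_i z_i\|^2$ termwise using \autoref{key_C-oval_property} on the pairs with $\lambda_i^{(j)}\lambda_i^{(k)}>0$, and finish with the triangle inequality. The only cosmetic differences are that the paper works directly with the unnormalized vectors and keeps the zero generators (handling the $\lambda_i^{(k)}\lambda_i^{(l)}=0$ case as an equality), whereas you normalize to unit vectors $u_i^{(j)},v^{(j)}$ and discard the null generators; both bookkeepings are fine.
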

\begin{proof}
	\begin{figure*}[t]
		\begin{align}\label{p_{P_i}^2}
			p_{P_i}^2=
			\sum_{k=1}^{n(m+1)+1}\sum_{l=1}^{n(m+1)+1}
			8^2  \alpha_i^2 l_i^2 t_k t_l \lambda_i^{(k)}\lambda_i^{(l)}
			\left\| \bar{x}_k-x_E\right\| \left\| \bar{x}_l-x_E\right\|
			(\bar{x}_k-x_{P_i})^\top(\bar{x}_l-x_{P_i}).
		\end{align}
		\begin{equation}\label{p_{P_i}^2_is_greater}
			\begin{aligned}
				p_{P_i}^2&\ge
				\sum_{k=1}^{n(m+1)+1}\sum_{l=1}^{n(m+1)+1}
				8^2  \alpha_i^2 l_i^2 t_k t_l \lambda_i^{(k)}\lambda_i^{(l)}
				\left\| \bar{x}_k-x_{P_i}\right\| \left\| \bar{x}_l-x_{P_i}\right\|
				(\bar{x}_k-x_E)^\top(\bar{x}_l-x_E)\\
				&=\left( \sum_{k=1}^{n(m+1)+1} t_k8\alpha_i l_i \lambda_i^{(k)} \left\| \bar{x}_k-x_{P_i}\right\|(\bar{x}_k-x_E)\right)^2 =\left( p_E^{(i)}\right) ^2.
			\end{aligned}
		\end{equation}
	\end{figure*}
	Pick $\mathbf{p}=(p_{P_1}^\top,\dots,p_{P_m}^\top,p_E^\top)^\top \in \partial V^g(\mathbf{y})$. By \autoref{estimate of generalized gradient} and Carathéodory theorem \cite{rockafellar1997convex}, $\mathbf{p}$ can be represented as follows:
	\begin{align*}
		p_{P_i}=&\sum_{k=1}^{n(m+1)+1} t_k8\alpha_i l_i \lambda_i^{(k)} \left\| \bar{x}_k-x_E\right\|(\bar{x}_k-x_{P_i}), \\
		&i=1,\dots,m,\\
		p_{E}=&-\sum_{k=1}^{n(m+1)+1} t_k\sum_{i=1}^m8\alpha_i^2 l_i \lambda_i^{(k)} \left\| \bar{x}_k-x_{P_i}\right\|(\bar{x}_k-x_E),
	\end{align*}
	where 
	\begin{align*}
		&\bar{x}_k\in\mathcal{P}(\mathbf{y}),\\
		&(\lambda_1^{(k)},\dots,\lambda_m^{(k)},0,\dots,0)\in\mathcal{K}(\bar{x}_k,\mathbf{y}),\\
		&t_k \ge 0, k=1,\dots,n(m+1)+1,\\ 
		&\sum_{k=1}^{n(m+1)+1}t_k=1 .
	\end{align*}
	For convenience, let
	\begin{align*}
		p_E^{(i)}=\sum_{k=1}^{n(m+1)+1} t_k8\alpha_i l_i \lambda_i^{(k)} \left\| \bar{x}_k-x_{P_i}\right\|(\bar{x}_k-x_E).
	\end{align*}
	Calculate $p_{P_i}^2$, see \eqref{p_{P_i}^2} at the top of the page.\\
	If $\lambda_i^{(k)}\lambda_i^{(l)}=0$, then
	\begin{align*}
		&\lambda_i^{(k)}\lambda_i^{(l)}
		\left\| \bar{x}_k-x_E\right\| \left\| \bar{x}_l-x_E\right\|
		(\bar{x}_k-x_{P_i})^\top(\bar{x}_l-x_{P_i})\\
		=&
		\lambda_i^{(k)}\lambda_i^{(l)}
		\left\| \bar{x}_k-x_{P_i}\right\| \left\| \bar{x}_l-x_{P_i}\right\|
		(\bar{x}_k-x_E)^\top(\bar{x}_l-x_E).
	\end{align*}
	If $\lambda_i^{(k)}\lambda_i^{(l)}>0$, by \eqref{K-T}, we obtain that $\bar{x}_k,\bar{x}_l\in\partial\mathcal{D}_i(x_{P_i},x_E)$. Then, by \autoref{key_C-oval_property}, we have that
	\begin{align*}
		&\lambda_i^{(k)}\lambda_i^{(l)}
		\left\| \bar{x}_k-x_E\right\| \left\| \bar{x}_l-x_E\right\|
		(\bar{x}_k-x_{P_i})^\top(\bar{x}_l-x_{P_i})\\
		\ge&
		\lambda_i^{(k)}\lambda_i^{(l)}
		\left\| \bar{x}_k-x_{P_i}\right\| \left\| \bar{x}_l-x_{P_i}\right\|
		(\bar{x}_k-x_E)^\top(\bar{x}_l-x_E).
	\end{align*}
	Thus, we obtain \eqref{p_{P_i}^2_is_greater} which is shown at the top of the page. This implies $\left\| p_{P_i}\right\| \ge  \left\| p_E^{(i)}\right\|$. Then,
	\begin{align*}
		\sum_{i=1}^m\alpha_i\left\| p_{P_i}\right\| 
		\ge\sum_{i=1}^m\alpha_i\left\| p_E^{(i)}\right\| 
		\ge\left\| \sum_{i=1}^m\alpha_ip_E^{(i)}\right\|  
		=\left\| p_E\right\| .
	\end{align*}
	The proof is completed.
\end{proof}
\begin{corollary}\label{th_sub2}
	Assume that $\alpha_i>1$, $l_i>0$ for any $i\in[m]$ and $g:\mathbb{R}^n\rightarrow \mathbb{R}$ is locally Lipschitz continuous on $\mathbb{R}^n$. Then,
	$\forall\mathbf{y}\in\Omega$, $\forall\mathbf{p}=(p_{P_1}^\top,\dots,p_{P_m}^\top,p_E^\top)^\top \in D^+ V^g(\mathbf{y}),$
	\begin{align*}
		-\sum_{i=1}^m\alpha_i\left\| {p}_{P_i}\right\| +\left\| {p}_E\right\| \le 0.
	\end{align*}
\end{corollary}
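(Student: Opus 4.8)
The plan is to observe that this corollary is an immediate consequence of \autoref{th sub} together with the elementary inclusion of the superdifferential into the Clarke generalized gradient. First I would invoke \autoref{estimate of generalized gradient}, which guarantees that $V^g$ is locally Lipschitz continuous on $\Omega$; in particular $V^g$ is locally Lipschitz near any fixed $\mathbf{y}\in\Omega$. This is exactly the hypothesis required to apply \autoref{th_D+D-partial}, which yields
\begin{align*}
	D^+ V^g(\mathbf{y})\cup D^- V^g(\mathbf{y})\subseteq \partial V^g(\mathbf{y}).
\end{align*}

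Next, given any $\mathbf{p}=(p_{P_1}^\top,\dots,p_{P_m}^\top,p_E^\top)^\top\in D^+ V^g(\mathbf{y})$, the inclusion above places $\mathbf{p}$ inside $\partial V^g(\mathbf{y})$, and then \autoref{th sub} directly delivers the desired inequality $-\sum_{i=1}^m\alpha_i\left\| p_{P_i}\right\| +\left\| p_E\right\| \le 0$. Since $\mathbf{y}$ and $\mathbf{p}$ were arbitrary, this establishes the claim for all $\mathbf{y}\in\Omega$ and all $\mathbf{p}\in D^+V^g(\mathbf{y})$, which is precisely condition (a) in the goal of the paper, i.e. $V^g$ is a viscosity subsolution of the HJI PDE in \eqref{HJI}.

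There is essentially no further obstacle: all the substantive work has already been carried out in the proof of \autoref{th sub}, where the structure of $\partial V^g$ supplied by \autoref{estimate of generalized gradient} is combined with the key geometric inequality \autoref{key_C-oval_property}, the Carathéodory representation of points in a convex hull, and a Cauchy--Schwarz estimate to obtain $\left\| p_{P_i}\right\| \ge \left\| p_E^{(i)}\right\|$ and hence the Hamiltonian bound. The corollary merely records that, since the viscosity subsolution test only probes superdifferentials and $D^+V^g(\mathbf{y})\subseteq\partial V^g(\mathbf{y})$, the inequality on the full generalized gradient automatically transfers to $D^+V^g(\mathbf{y})$.
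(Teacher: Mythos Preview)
Your proposal is correct and follows essentially the same approach as the paper: the paper's proof is the single line ``This follows from \autoref{th sub} and \autoref{th_D+D-partial},'' and you have simply unpacked that line, explicitly noting that \autoref{estimate of generalized gradient} supplies the local Lipschitz continuity needed to invoke \autoref{th_D+D-partial}.
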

\begin{proof}
	This follows from \autoref{th sub} and \autoref{th_D+D-partial}.
\end{proof}
\subsection{Verification of viscosity supersolution}\label{subsec_supersolution}
\begin{proposition}\label{th_super}
	Assume that $\alpha_i>1$, $l_i>0$ for any $i\in[m]$ and $g:\mathbb{R}^n\rightarrow \mathbb{R}$ is locally Lipschitz continuous on $\mathbb{R}^n$. Then, 
	$\forall\mathbf{y}\in\Omega$, $\forall\mathbf{p}=(p_{P_1}^\top,\dots,p_{P_m}^\top,p_E^\top)^\top \in D^- V^g(\mathbf{y}),$
	\begin{align}\label{eq_super}
		-\sum_{i=1}^m\alpha_i\left\| {p}_{P_i}\right\| +\left\| {p}_E\right\| = 0.
	\end{align}
\end{proposition}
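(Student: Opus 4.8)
\textbf{Proof plan for \autoref{th_super}.}

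The plan is to obtain the equality \eqref{eq_super} directly, by combining a first-order feasibility analysis of the active constraints defining $\mathcal{D}^{*}(\mathbf{y})$ with the defining inequality of the subdifferential $D^{-}V^{g}(\mathbf{y})$. (As a sanity check, since $D^{-}V^{g}(\mathbf{y})\subseteq\partial V^{g}(\mathbf{y})$ by \autoref{th_D+D-partial}, \autoref{th sub} already yields the inequality ``$\le 0$'' in \eqref{eq_super}; the construction below recovers the full equality.) Fix $\mathbf{y}=(x_{P_1}^{\top},\dots,x_{P_m}^{\top},x_E^{\top})^{\top}\in\Omega$ and pick an optimal point $\hat{x}\in\mathcal{P}(\mathbf{y})$. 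First I would dispose of the case $\hat{x}\in\mathcal{D}^{*}(\mathbf{y})^{\circ}$ (which in particular contains $\hat{x}=x_E$, as $d_i(x_E;x_{P_i},x_E)=l_i-\|x_{P_i}-x_E\|<0$): here $d_i(\hat{x};x_{P_i},x_E)<0$ for every $i$, hence by continuity $\hat{x}\in\mathcal{D}^{*}(\mathbf{y}')^{\circ}$ and $V^{g}(\mathbf{y}')\le g(\hat{x})=V^{g}(\mathbf{y})$ for all $\mathbf{y}'$ near $\mathbf{y}$; thus $V^{g}$ has a local maximum at $\mathbf{y}$, which forces $D^{-}V^{g}(\mathbf{y})\subseteq\{0\}$ and makes \eqref{eq_super} trivial. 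So from now on $\hat{x}\in\partial\mathcal{D}^{*}(\mathbf{y})$, whence $\hat{x}\ne x_E$, $\hat{x}\ne x_{P_i}$ for all $i$ (since $x_{P_i}\notin\mathcal{D}_i(x_{P_i},x_E)$), and $I\triangleq\{i\in[m]:\hat{x}\in\partial\mathcal{D}_i(x_{P_i},x_E)\}\ne\emptyset$. Write $e_E\triangleq(\hat{x}-x_E)/\|\hat{x}-x_E\|$ and $e_{P_i}\triangleq(\hat{x}-x_{P_i})/\|\hat{x}-x_{P_i}\|$.

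The core step is a family of perturbations of $\mathbf{y}$ that keep the \emph{same} point $\hat{x}$ feasible for the program defining $V^{g}$. For a direction $\mathbf{w}=(v_{P_1}^{\top},\dots,v_{P_m}^{\top},v_E^{\top})^{\top}$ put $\mathbf{y}_{\epsilon}\triangleq\mathbf{y}+\epsilon\mathbf{w}$. Because $d_i$ is smooth off the diagonal, the derivative at $\epsilon=0$ of $\epsilon\mapsto d_i(\hat{x};x_{P_i}+\epsilon v_{P_i},x_E+\epsilon v_E)$ is $v_{P_i}^{\top}e_{P_i}-\alpha_i v_E^{\top}e_E$. Hence, if $v_{P_i}^{\top}e_{P_i}<\alpha_i v_E^{\top}e_E$ for every $i\in I$ (the inactive constraints impose nothing, as $d_i(\hat{x};\cdot)<0$ there), then $d_i(\hat{x};x_{P_i}+\epsilon v_{P_i},x_E+\epsilon v_E)<0$ for all $i$ once $\epsilon>0$ is small enough, so $\hat{x}\in\mathcal{D}^{*}(\mathbf{y}_{\epsilon})$ and $V^{g}(\mathbf{y}_{\epsilon})\le g(\hat{x})=V^{g}(\mathbf{y})$ — this is the feasibility content of \autoref{th_epsilon_lemma}, here for $\hat{x}$ itself. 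Feeding $\mathbf{y}_{\epsilon}$, $\epsilon\downarrow0$, into the definition of $\mathbf{p}=(p_{P_1}^{\top},\dots,p_{P_m}^{\top},p_E^{\top})^{\top}\in D^{-}V^{g}(\mathbf{y})$ gives $\mathbf{p}^{\top}\mathbf{w}\le0$. These $\mathbf{w}$ form the interior of the closed convex cone $C\triangleq\{\mathbf{w}:v_{P_i}^{\top}e_{P_i}\le\alpha_i v_E^{\top}e_E\ \forall i\in I\}$, which is full-dimensional (e.g.\ $(0,\dots,0,e_E^{\top})^{\top}$ is an interior point), so $\mathbf{p}^{\top}\mathbf{w}\le0$ extends by continuity to all $\mathbf{w}\in C$.

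It then remains to read off $\mathbf{p}$ by elementary polar-cone duality. Taking $v_E=0$: for $i\notin I$ the vector $v_{P_i}$ is free, so $p_{P_i}=0$; for $i\in I$, $v_{P_i}$ ranges over the half-space $\{v:e_{P_i}^{\top}v\le0\}$, whose polar is the ray $\mathbb{R}_{\ge0}\,e_{P_i}$, so $p_{P_i}=\lambda_i e_{P_i}$ with $\lambda_i\ge0$; thus $\|p_{P_i}\|=\lambda_i$, with $\lambda_i=0$ for $i\notin I$. Taking instead $v_{P_i}=\alpha_i(v_E^{\top}e_E)e_{P_i}$ for $i\in I$ and $v_{P_i}=0$ otherwise (which meets the constraints defining $C$ with equality), the inequality $\mathbf{p}^{\top}\mathbf{w}\le0$ becomes $\big(\sum_{i=1}^{m}\alpha_i\lambda_i e_E+p_E\big)^{\top}v_E\le0$ for all $v_E\in\mathbb{R}^{n}$; a linear functional that is nonpositive on all of $\mathbb{R}^{n}$ vanishes, so $p_E=-\sum_{i=1}^{m}\alpha_i\lambda_i e_E$ and therefore $\|p_E\|=\sum_{i=1}^{m}\alpha_i\lambda_i=\sum_{i=1}^{m}\alpha_i\|p_{P_i}\|$, which is \eqref{eq_super}.

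I expect the main obstacle to be the careful bookkeeping of active versus inactive constraints in the perturbation step: one must choose a single $\epsilon_0>0$ valid for all $i$ at once, keep the \emph{same} candidate minimizer $\hat{x}$ feasible (so that no first-order error in $g$ is introduced, which is why one cannot perturb the minimizer itself), and legitimately pass from strict to non-strict inequalities in the cone condition. Everything else reduces to the first-order expansion of $d_i$ and standard convex duality, so once the perturbation family is set up correctly the argument is short.
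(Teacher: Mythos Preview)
Your argument is correct and takes a genuinely different, more direct route than the paper. The paper first invokes the Clarke-gradient estimate (\autoref{estimate of generalized gradient}) to write any $\mathbf{p}\in D^-V^g(\mathbf{y})$ as a convex combination indexed by optimal points $\bar{x}_k$, and then uses the geometric inequality of \autoref{key_C-oval_property} together with \autoref{th_epsilon_lemma} in a contradiction argument to force all ``active'' $\bar{x}_k$ to coincide; only after collapsing to a single point does the norm identity drop out. You bypass all of this: by fixing \emph{one} optimal $\hat{x}$ and perturbing $\mathbf{y}$ so that $\hat{x}$ stays feasible, you obtain $\mathbf{p}^\top\mathbf{w}\le 0$ on the polyhedral cone $C$, and elementary polar-cone duality (essentially Farkas' lemma) then pins down $p_{P_i}=\lambda_i e_{P_i}$ and $p_E=-\sum_i\alpha_i\lambda_i e_E$ directly. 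Your proof needs neither \autoref{key_C-oval_property} nor the generalized-gradient estimate, and \autoref{th_epsilon_lemma} is only mentioned in passing---the first-order expansion of $d_i$ already does the work. The paper's approach has the virtue of reusing machinery built for the subsolution part and of exhibiting the structure of $\partial V^g$, but for the supersolution statement in isolation your argument is shorter and requires less preparation. One small remark: your aside that the perturbation step ``is the feasibility content of \autoref{th_epsilon_lemma}'' is slightly misleading, since that corollary treats only the specific motion of $x_{P_i},x_E$ toward $\hat{x}$, whereas your cone $C$ is strictly larger; the argument stands without that reference.
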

\begin{proof}
	\begin{figure*}
		\begin{equation}\label{pz}
			\begin{aligned}
				&\mathbf{p}^\top \mathbf{z}
				=&\sum_{i=1}^m \sum_{k=1}^{n(m+1)+1} 8\alpha_i^2l_i t_k \lambda_i^{(k)}
				\left( 
				\left\| \bar{x}_k-x_E \right\|
				\left( \bar{x}_k-x_{P_i} \right) ^\top 
				\frac{\left( \hat{x}_{k_1}^{(i)}-x_{P_i}\right)}{\left\| \hat{x}_{k_1}^{(i)}-x_{P_i} \right\|} 
				-
				\left\| \bar{x}_k-x_{P_i} \right\|
				\left( \bar{x}_k-x_E \right) ^\top 
				\frac{\left( \bar{x}_{k_1}-x_E\right) }{\left\| \bar{x}_{k_1}-x_E \right\|} 
				\right) .
			\end{aligned}
		\end{equation}
		\begin{equation}\label{=0}
			\begin{aligned}
				8\alpha_i^2l_i t_k \lambda_i^{(k)}
				\left( 
				\left\| \bar{x}_k-x_E \right\|
				\left( \bar{x}_k-x_{P_i} \right) ^\top 
				\frac{\left( \hat{x}_{k_1}^{(i)}-x_{P_i}\right)}{\left\| \hat{x}_{k_1}^{(i)}-x_{P_i} \right\|} 
				-
				\left\| \bar{x}_k-x_{P_i} \right\|
				\left( \bar{x}_k-x_E \right) ^\top 
				\frac{\left( \bar{x}_{k_1}-x_E\right) }{\left\| \bar{x}_{k_1}-x_E \right\|} 
				\right) 
				=0.
			\end{aligned}
		\end{equation}
		\begin{equation}\label{>=0}
			\begin{aligned}
					&	8\alpha_i^2l_i t_k \lambda_i^{(k)}
				\left( 
				\left\| \bar{x}_k-x_E \right\|
				\left( \bar{x}_k-x_{P_i} \right) ^\top 
				\frac{\left( \hat{x}_{k_1}^{(i)}-x_{P_i}\right)}{\left\| \hat{x}_{k_1}^{(i)}-x_{P_i} \right\|} 
				-
				\left\| \bar{x}_k-x_{P_i} \right\|
				\left( \bar{x}_k-x_E \right) ^\top 
				\frac{\left( \bar{x}_{k_1}-x_E\right) }{\left\| \bar{x}_{k_1}-x_E \right\|} 
				\right)  \\
				=&8\alpha_i^2 l_i t_{k_1} \lambda_i^{(k)} \left\| \bar{x}_k-x_E \right\|	\left\| \bar{x}_k-x_{P_i} \right\|
				\left( 
				\frac{\left( \bar{x}_k-x_{P_i} \right) ^\top\left( \hat{x}_{k_1}^{(i)}-x_{P_i}\right)}{\left\| \bar{x}_k-x_{P_i} \right\| \left\| \hat{x}_{k_1}^{(i)}-x_{P_i} \right\| }
				-
				\frac{\left( \bar{x}_k-x_E \right) ^\top\left( \hat{x}_{k_1}^{(i)}-x_E\right)}{\left\| \bar{x}_k-x_E \right\| \left\| \hat{x}_{k_1}^{(i)}-x_E \right\| } 
				\right) \\
				\ge&0.
			\end{aligned}
		\end{equation}
		\begin{equation}\label{>0}
			\begin{aligned}
				8\alpha_{i_2}^2 l_{i_2} t_{k_2} \lambda_{i_2}^{(k_2)} \left\| \bar{x}_{k_2}-x_E \right\|	\left\| \bar{x}_{k_2}-x_{P_{i_2}} \right\|
				\left( 
				\frac{\left( \bar{x}_{k_2}-x_{P_{i_2}} \right) ^\top\left( \hat{x}_{k_1}^{(i_2)}-x_{P_{i_2}}\right)}{\left\| \bar{x}_{k_2}-x_{P_{i_2}} \right\| \left\| \hat{x}_{k_1}^{(i_2)}-x_{P_{i_2}} \right\| }
				-
				\frac{\left( \bar{x}_{k_2}-x_E \right) ^\top\left( \hat{x}_{k_1}^{(i_2)}-x_E\right)}{\left\| \bar{x}_{k_2}-x_E \right\| \left\| \hat{x}_{k_1}^{(i_2)}-x_E \right\| } 
				\right) 
				>0.
			\end{aligned}
		\end{equation}
		\begin{equation}\label{v(x+ez)=v(x)--1}
			\begin{aligned}
				&\rho_i\left( x_{P_i}+\epsilon \alpha_i \frac{\hat{x}_{k_1}^{(i)}-x_{P_i}}{\left\| \hat{x}_{k_1}^{(i)}-x_{P_i}\right\| },x_E+\epsilon \frac{\bar{x}_{k_1}-x_E}{\left\| \bar{x}_{k_1}-x_E\right\| },\frac{\bar{x}_{k_1}-x_E}{\left\| \bar{x}_{k_1}-x_E\right\| }\right) \\
				=&\left\|
				\hat{x}_{k_1}^{(i)}-
				\left( x_E+\epsilon \frac{\bar{x}_{k_1}-x_E}{\left\| \bar{x}_{k_1}-x_E\right\| }\right) \right\|
				=\left\| \hat{x}_{k_1}^{(i)}-x_E\right\| -\epsilon
				=\rho_i\left( x_{P_i},x_E,\frac{\bar{x}_{k_1}-x_E}{\left\| \bar{x}_{k_1}-x_E\right\| }\right) -\epsilon.
			\end{aligned}
		\end{equation}
		\begin{equation}\label{v(x+ez)=v(x)--3}
			\begin{aligned}
				\bar{x}_{k_1}
				&=x_E+ \min_i\rho_i\left( x_{P_i},x_E,\frac{\bar{x}_{k_1}-x_E}{\left\| \bar{x}_{k_1}-x_E\right\| }\right)\frac{\bar{x}_{k_1}-x_E}{\left\| \bar{x}_{k_1}-x_E\right\| }\\
				&=x_E+ \min_i\left( \rho_i\left( x_{P_i},x_E,\frac{\bar{x}_{k_1}-x_E}{\left\| \bar{x}_{k_1}-x_E\right\| }\right)-\epsilon\right) \frac{\bar{x}_{k_1}-x_E}{\left\| \bar{x}_{k_1}-x_E\right\| }+\epsilon \frac{\bar{x}_{k_1}-x_E}{\left\| \bar{x}_{k_1}-x_E\right\| }\\	
				&=\left( x_E+\epsilon \frac{\bar{x}_{k_1}-x_E}{\left\| \bar{x}_{k_1}-x_E\right\| }\right) + 
				\min_i \rho_i\left( x_{P_i}+\epsilon \alpha_i \frac{\hat{x}_{k_1}^{(i)}-x_{P_i}}{\left\| \hat{x}_{k_1}^{(i)}-x_{P_i}\right\| },x_E+\epsilon \frac{\bar{x}_{k_1}-x_E}{\left\| \bar{x}_{k_1}-x_E\right\| },\frac{\bar{x}_{k_1}-x_E}{\left\| \bar{x}_{k_1}-x_E\right\| }\right)\frac{\bar{x}_{k_1}-x_E}{\left\| \bar{x}_{k_1}-x_E\right\| }.
			\end{aligned}
		\end{equation}
	\end{figure*}
	Pick $\mathbf{y}=(x_{P_1}^\top,\dots,x_{P_m}^\top,x_E^\top)^\top\in\Omega$. Pick $\mathbf{p}=(p_{P_1}^\top,\dots,p_{P_m}^\top,p_E^\top)^\top \in D^- V^g(\mathbf{y})$. By \autoref{th_D+D-partial}, \autoref{estimate of generalized gradient} and Carathéodory theorem \cite{rockafellar1997convex}, $\mathbf{p}$ can be represented as follows:
	\begin{align*}
		p_{P_i}=&\sum_{k=1}^{n(m+1)+1} t_k8\alpha_i l_i \lambda_i^{(k)} \left\| \bar{x}_k-x_E\right\|(\bar{x}_k-x_{P_i}), \\
		&i=1,\dots,m,\\
		p_{E}=&-\sum_{k=1}^{n(m+1)+1} t_k\sum_{i=1}^m8\alpha_i^2 l_i \lambda_i^{(k)} \left\| \bar{x}_k-x_{P_i}\right\|(\bar{x}_k-x_E),
	\end{align*}
	where 
	\begin{align*}
		&\bar{x}_k\in\mathcal{P}(\mathbf{y}),\\
		&(\lambda_1^{(k)},\dots,\lambda_m^{(k)},0,\dots,0)\in\mathcal{K}(\bar{x}_k,\mathbf{y}),\\
		&t_k \ge 0, k=1,\dots,n(m+1)+1,\\ 
		&\sum_{k=1}^{n(m+1)+1}t_k=1 .
	\end{align*}
		Next, we show that $\left\lbrace \bar{x}_k:\exists i,t_k\lambda_i^{(k)}>0\right\rbrace$ is either a singleton or an empty set.
	For the sake of contradiction, assume that there exist $ k_1,k_2,i_1,i_2$ such that $\bar{x}_{k_1}\neq\bar{x}_{k_2} $, $t_{k_1}\lambda_{i_1}^{(k_1)}>0$, $t_{k_2}\lambda_{i_2}^{(k_2)}>0$. By \eqref{K-T}, we obtain $\bar{x}_{k_1}\in \partial \mathcal{D}_{i_1}(x_{P_{i_1}},x_E), \bar{x}_{k_2}\in \partial \mathcal{D}_{i_2}(x_{P_{i_2}},x_E)$. 
	Let
	\begin{align*}
		\hat{x}_{k_1}^{(i)}=&x_E+\rho_i\left(x_{P_i},x_E,\frac{\bar{x}_{k_1}-x_E}{ \left\| \bar{x}_{k_1}-x_E\right\|}  \right)\frac{\bar{x}_{k_1}-x_E}{ \left\| \bar{x}_{k_1}-x_E\right\|},\\
		&i=1,\dots,m,
	\end{align*}
	It is easy to verify that 
	\begin{align}\label{eq_same_direction}
		\frac{\hat{x}_{k_1}^{(i)}-x_E}{ \left\| \hat{x}_{k_1}^{(i)}-x_E\right\|}=\frac{\bar{x}_{k_1}-x_E}{ \left\| \bar{x}_{k_1}-x_E\right\|}.
	\end{align}
	Let
	\begin{align*}
		\mathbf{z}=
		\begin{pmatrix}
			\frac{\alpha_1\left( \hat{x}_{k_1}^{(1)}-x_{P_1}\right) }{\left\|\hat{x}_{k_1}^{(1)}-x_{P_1} \right\| }\\
			\vdots\\
			\frac{\alpha_m\left( \hat{x}_{k_1}^{(m)}-x_{P_m}\right) }{\left\|\hat{x}_{k_1}^{(m)}-x_{P_m} \right\| }\\[10pt]
			\frac{ \bar{x}_{k_1}-x_E}{\left\| \bar{x}_{k_1}-x_E \right\| }
		\end{pmatrix}
	\end{align*}
	Consider $\mathbf{p}^\top \mathbf{z}$, see \eqref{pz} at the top of the page. 
	When $t_k\lambda_i^{(k)}=0$, we have \eqref{=0} which is shown at the top of the page. 
	When $t_k\lambda_i^{(k)}>0$, by \eqref{eq_same_direction} and \autoref{key_C-oval_property}, we have \eqref{>=0} which is shown at the top of the page. Due to $\bar{x}_{k_1}\neq\bar{x}_{k_2}$, we have $\hat{x}_{k_1}^{(i_2)}\neq\bar{x}_{k_2}$. Then, by the equality condition of \autoref{key_C-oval_property}, we have \eqref{>0} which is shown at the top of the page. Thus $\mathbf{p}^\top \mathbf{z}>0$.\\
	Consider $V^g(\mathbf{y}+\epsilon\mathbf{z})$ where $\epsilon\in(0,\left\| \bar{x}_{k_1}-x_E\right\| )$. According to \autoref{th_epsilon_lemma}(b), $\mathcal{D}^*(\mathbf{y}+\epsilon\mathbf{z})\subseteq\mathcal{D}^*(\mathbf{y})$. Then, $V^g(\mathbf{y}+\epsilon\mathbf{z})\ge V^g(\mathbf{y})$.
	By \autoref{th_epsilon_lemma}(a), for any $i$,
	\begin{align*}
		\hat{x}_{k_1}^{(i)}\in \partial \mathcal{D}_i\left( x_{P_i}+\epsilon \alpha_i \frac{\hat{x}_{k_1}^{(i)}-x_{P_i}}{\left\| \hat{x}_{k_1}^{(i)}-x_{P_i}\right\| },x_E+\epsilon \frac{\bar{x}_{k_1}-x_E}{\left\| \bar{x}_{k_1}-x_E\right\| }\right). 
	\end{align*}
	We obtain \eqref{v(x+ez)=v(x)--1} which is shown at the top of the page. By \eqref{v(x+ez)=v(x)--1}, we derive \eqref{v(x+ez)=v(x)--3} which is also shown at the top of the page. By \eqref{v(x+ez)=v(x)--3}, we obtain $\bar{x}_{k_1}\in\partial\mathcal{D}^*(\mathbf{y}+\epsilon\mathbf{z})$. Then $V^g(\mathbf{y}+\epsilon\mathbf{z})\le g(\bar{x}_{k_1})=V^g(\mathbf{y})$.  
	We have $V^g(\mathbf{y}+\epsilon\mathbf{z})=V^g(\mathbf{y})$ when $\epsilon\in\left( 0,\left\| \bar{x}_{k_1}-x_E\right\|\right)  $. Therefore,
	\begin{align*}
		\lim\limits_{\epsilon\rightarrow0^+}\frac{V^g(\mathbf{y}+\epsilon\mathbf{z})-V^g(\mathbf{y})-\mathbf{p}^\top\epsilon\mathbf{z}}{\left\| \epsilon\mathbf{z}\right\| }
		=-\frac{\mathbf{p}^\top\mathbf{z}}{\left\| \mathbf{z}\right\|}
		<0.
	\end{align*}
	This contradicts that $\mathbf{p}\in D^-V^g(\mathbf{y})$ by the definition of subdifferential. We obtain that $\left\lbrace \bar{x}_k:\exists i,t_k\lambda_i^{(k)}>0\right\rbrace$ is either a singleton or an empty set. If $\left\lbrace \bar{x}_k:\exists i,t_k\lambda_i^{(k)}>0\right\rbrace$ is an empty set,
	then $\forall i$, $\forall k$, $t_k\lambda_i^{(k)}=0$. Thus,
	\begin{align*}
		p_{P_1}=p_{P_2}=\dots=p_{P_m}=p_E=0.
	\end{align*}
	\eqref{eq_super} is satisfied. If $\left\lbrace \bar{x}_k:\exists i,t_k\lambda_i^{(k)}>0\right\rbrace$ is a singleton, then let $\left\lbrace \bar{x}_k:\exists i,t_k\lambda_i^{(k)}>0\right\rbrace$=$\left\lbrace \bar{x}^*\right\rbrace$ and $\hat{K}=\left\lbrace k:\exists i,t_k\lambda_i^{(k)}>0\right\rbrace$. We have
	\begin{align*}
		p_{P_i}=& \sum_{k\in\hat{K}} t_k8\alpha_i l_i \lambda_i^{(k)} \left\| \bar{x}^*-x_E\right\| (\bar{x}^*-x_{P_i}), \\
		&i=1,\dots,m,\\
		p_{E}=&- \sum_{k\in\hat{K}} t_k\sum_{i=1}^m8\alpha_i^2 l_i \lambda_i^{(k)} \left\| \bar{x}^*-x_{P_i}\right\| (\bar{x}^*-x_E).
	\end{align*}
	Then,
	\begin{align*}
		&\sum_{i=1}^m\alpha_i\left\| {p}_{P_i}\right\| \\
		=& \sum_{i=1}^m \sum_{k\in\hat{K}} t_k8\alpha_i^2 l_i \lambda_i^{(k)} \left\| \bar{x}^*-x_E\right\|\left\| \bar{x}^*-x_{P_i}\right\|  \\
		=&\left\| {p}_E\right\|. 
	\end{align*}
	\eqref{eq_super} is satisfied. The proof is completed.
\end{proof}

\subsection{Summary}\label{subsec_Summary}
At the end of this section,  we summarize the main result of this paper.
\begin{theorem}\label{th_sum_viscosity}
	Assume that $\alpha_i>1$, $l_i>0$ for any $i\in[m]$ and $g:\mathbb{R}^n\rightarrow \mathbb{R}$ is locally Lipschitz continuous on $\mathbb{R}^n$. Then, $V^g$ is a viscosity solution of the HJI PDE Dirichlet problem \eqref{HJI}.
\end{theorem}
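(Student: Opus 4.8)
The plan is to assemble the pieces already proved and to dispatch the one remaining routine item, the Dirichlet boundary condition. First I would record that in \eqref{HJI} the function $F$ of the viscosity-solution definition is $F(\mathbf{y},r,\mathbf{p})=H(\mathbf{y},\mathbf{p})=-\sum_{i=1}^m\alpha_i\|p_{P_i}\|+\|p_E\|$, which is continuous, so the notion of viscosity solution applies. The proof then splits into two independent claims: the PDE part, namely that $V^g$ is a viscosity solution of $H(\mathbf{y},DV(\mathbf{y}))=0$ in $\Omega$, and the boundary part, namely that $V^g=g(x_E)$ on $\partial\mathcal{T}$. Continuity of $V^g$ on $\Omega$ is already supplied by \autoref{estimate of generalized gradient}, where it is shown to be locally Lipschitz there.

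For the PDE part nothing new is needed. \autoref{th_sub2} states precisely that $H(\mathbf{y},\mathbf{p})\le 0$ for every $\mathbf{y}\in\Omega$ and every $\mathbf{p}\in D^+V^g(\mathbf{y})$, which is the definition of a viscosity subsolution; \autoref{th_super} gives $H(\mathbf{y},\mathbf{p})=0$, hence in particular $\ge 0$, for every $\mathbf{y}\in\Omega$ and every $\mathbf{p}\in D^-V^g(\mathbf{y})$, which is the definition of a viscosity supersolution. (Note that \autoref{th_super} delivers more than is required, namely the sharp equality on the subdifferential.) Being simultaneously a viscosity sub- and supersolution, $V^g$ is a viscosity solution of the HJI PDE in $\Omega$.

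For the boundary condition I would argue directly. Fix $\mathbf{y}=(x_{P_1}^\top,\dots,x_{P_m}^\top,x_E^\top)^\top\in\partial\mathcal{T}$, so that $\min_i(\|x_{P_i}-x_E\|-l_i)=0$; pick $i_0$ with $\|x_{P_{i_0}}-x_E\|=l_{i_0}$, while $\|x_{P_j}-x_E\|\ge l_j$ for all $j$. For any $x$, $d_{i_0}(x;x_{P_{i_0}},x_E)\le 0$ is equivalent to $\alpha_{i_0}\|x-x_E\|+l_{i_0}\le\|x-x_{P_{i_0}}\|$, and by the triangle inequality $\|x-x_{P_{i_0}}\|\le\|x-x_E\|+\|x_E-x_{P_{i_0}}\|=\|x-x_E\|+l_{i_0}$, so $(\alpha_{i_0}-1)\|x-x_E\|\le 0$, forcing $x=x_E$. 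Hence $\mathcal{D}_{i_0}(x_{P_{i_0}},x_E)=\{x_E\}$, and since $d_j(x_E;x_{P_j},x_E)=l_j-\|x_{P_j}-x_E\|\le 0$ gives $x_E\in\mathcal{D}_j(x_{P_j},x_E)$ for every $j$, we obtain $\mathcal{D}^*(\mathbf{y})=\{x_E\}$ and therefore $V^g(\mathbf{y})=\min_{x\in\mathcal{D}^*(\mathbf{y})}g(x)=g(x_E)$. Combining the two parts proves the theorem.

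As for the main obstacle: at this stage there essentially is none. All the substantive work, namely the Clarke-gradient estimate obtained through Rockafellar's marginal-function theorem (\autoref{estimate of generalized gradient}), the geometric inequality \autoref{key_C-oval_property}, the comparison argument for the subsolution (\autoref{th sub}, \autoref{th_sub2}), and the delicate ``singleton'' argument for the supersolution (\autoref{th_super}), has already been carried out in the preceding results, so the present theorem amounts to bookkeeping. The only place demanding a small amount of care is the boundary-condition computation above, and it collapses to the single triangle-inequality estimate showing that the degenerate dominance region is the point $\{x_E\}$.
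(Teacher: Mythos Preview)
Your argument for the PDE part and for the pointwise boundary identity $V^g(\mathbf{y})=g(x_E)$ on $\partial\mathcal{T}$ is correct and matches the paper's approach. However, there is one piece you omit that the paper treats explicitly: continuity of $V^g$ \emph{up to} the boundary, i.e.\ on $\Omega\cup\partial\mathcal{T}$. \autoref{estimate of generalized gradient} only yields local Lipschitz continuity on the open set $\Omega$; it says nothing about the behavior of $V^g$ as $\mathbf{y}\to\mathbf{y}^*\in\partial\mathcal{T}$. Without this, the boundary condition $V^g|_{\partial\mathcal{T}}=g(x_E)$ is a statement about isolated values rather than about boundary data attained continuously, and the notion of ``viscosity solution of the Dirichlet problem \eqref{HJI}'' as used in the paper (and in the references it invokes for uniqueness, e.g.\ \cite{soravia1993pursuit}) requires $V^g\in C(\Omega\cup\partial\mathcal{T})$.

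The missing argument is short but not entirely vacuous: one uses the uniform diameter bound $\|x-x_E\|\le M(\mathbf{y})\triangleq\min_i\frac{\|x_{P_i}-x_E\|-l_i}{\alpha_i-1}$ for all $x\in\mathcal{D}^*(\mathbf{y})$ (this is the same triangle-inequality estimate you used, applied to every $i$), together with the facts that $M$ is continuous and $M(\mathbf{y}^*)=0$ for $\mathbf{y}^*\in\partial\mathcal{T}$. Then for $\mathbf{y}$ close to $\mathbf{y}^*$ every feasible $x$ is close to $x_E^*$, and local Lipschitz continuity of $g$ gives $|V^g(\mathbf{y})-g(x_E^*)|\to 0$. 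Adding this paragraph to your proof closes the gap and makes it coincide with the paper's.
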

\begin{proof}
	That $V^g$ is a viscosity solution of the HJI PDE Dirichlet problem \eqref{HJI} is equivalent to that $V^g$ satisfies all of the following statements:\\
	(a) $V^g$ is the viscosity solution of the HJI PDE of \eqref{HJI};\\
	(b) $V^g$ satisfies the boundary condition of \eqref{HJI};\\
	(c) $V^g$ is continuous in $\Omega\cup\partial\mathcal{T}$.
	
	By combining \autoref{th_sub2} and \autoref{th_super}, we obtain (a). Let $\mathbf{y}$ denote $(x_{P_1}^\top,\dots,x_{P_m}^\top,x_E^\top)^\top$.
	When $\mathbf{y}\in\partial\mathcal{T}$, it is obvious that $\mathcal{D}^*(\mathbf{y})=\left\lbrace x_E\right\rbrace $. This implies that $\inf_{x\in\mathcal{D}^*(\mathbf{y})}g(x)=g(x_E)$. Thus, the boundary condition in \eqref{HJI} holds. (b) is obtained.
	
	Pick $\mathbf{y}^*=({x_{P_1}^*}^\top,\dots,{x_{P_m}^*}^\top,{x_E^*}^\top)^\top\in\partial\mathcal{T}$. According to the continuity of $g$, for $\epsilon>0$ and $x_E^*$, there exist $\delta_1>0$ such that $\left| g(x)-g(x_E^*)\right|\le \epsilon $ for any $x\in B(x_E^*,\delta_1)$. In the proof of \autoref{th_uniform_compactness}, we show that $\forall \mathbf{y}\in\Omega\cup\partial\mathcal{T}$, $\forall x\in \mathcal{D}^*(\mathbf{y})$,
	$\left\| x-x_E \right\| \le M(\mathbf{y})$, 
	where $M(\mathbf{y})=\min_{i} \frac{\left\| x_{P_i}-x_E \right\| -l_i}{\alpha_i-1}$. According to the continuity of $M(\mathbf{y})$, for $\delta_1$ and $\mathbf{y}^*$, there exist $\delta_2>0$ such that $M(\mathbf{y})=\left|M(\mathbf{y})-M(\mathbf{y}^*) \right|<\delta_1 /2 $ for any $\mathbf{y}\in B(\mathbf{y}^*,\delta_2)$. Then, for any $\mathbf{y}\in B(\mathbf{y}^*,\min\left\lbrace \delta_1 /2, \delta_2\right\rbrace )\cap(\Omega\cup\partial\mathcal{T})$, for any $x\in\mathcal{D}^*(\mathbf{y})$,
	\begin{align*}
		&\left\| x-x_E^*\right\| 
		\le \left\| x-x_E\right\| +\left\| x_E-x_E^*\right\|\\
		\le& M(\mathbf{y})+\delta_1/2
		\le\delta_1/2+\delta_1/2
		=\delta_1.
	\end{align*}
	Then, for any $\mathbf{y}\in B(\mathbf{y}^*,\min\left\lbrace \delta_1 /2, \delta_2\right\rbrace )\cap(\Omega\cup\partial\mathcal{T})$, for any $x\in\mathcal{D}^*(\mathbf{y})$, $\left| g(x)-g(x_E^*)\right| < \epsilon $. This implies $\left| V^g(\mathbf{y})-V^g(\mathbf{y}^*)\right| < \epsilon $, $\forall\mathbf{y}\in B(\mathbf{y}^*,\min\left\lbrace \delta_1 /2, \delta_2\right\rbrace )\cap(\Omega\cup\partial\mathcal{T})$. We derive the continuity of $V^g$ at $\mathbf{y}^*$. Combining this and \autoref{estimate of generalized gradient}, we obtain $V^g$ is continuous in $\Omega\cup\partial\mathcal{T}$. (c) is obtained. The proof is completed.
\end{proof}

From \cite[Corollary 2.8]{soravia1993pursuit} and \autoref{th_sum_viscosity}, we derive the following result.
\begin{theorem}\label{th_soravia}
	Assume that $\alpha_i>1$, $l_i>0$ for any $i\in[m]$ and $g:\mathbb{R}^n\rightarrow \mathbb{R}$ is locally Lipschitz continuous on $\mathbb{R}^n$. Then,
	\begin{align*}
		&V^g(\mathbf{y})=\\
		&\sup_{(\delta_{P_1},\dots,\delta_{P_m})\in \Delta_{\mathbf{y}}}
		\inf_{u_E\in\mathcal{U}_0} J(\mathbf{y},\delta_{P_1}[u_E],\dots,\delta_{P_m}[u_E],u_E),
	\end{align*}
	where $\Delta_{\mathbf{y}}$ is the set of non-anticipative strategies \cite{elliott1972existence,cardaliaguet1996differential,soravia1993pursuit,mitchell2005time} from $\mathcal{U}_0$ to $\mathcal{U}_0^m$ with $\mathbf{y}=(x_{P_1}^\top,\dots,x_{P_m}^\top,x_E^\top)^\top$ as the initial state.
\end{theorem}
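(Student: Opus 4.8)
The plan is to obtain \autoref{th_soravia} as a direct consequence of the characterization theorem \cite[Corollary 2.8]{soravia1993pursuit} together with \autoref{th_sum_viscosity}. Soravia's framework shows that, for a pursuit--evasion game of Mayer type, the value in the Elliott--Kalton sense (here the quantity $\mathbf{y}\mapsto\sup_{(\delta_{P_1},\dots,\delta_{P_m})\in\Delta_{\mathbf{y}}}\inf_{u_E\in\mathcal{U}_0}J(\mathbf{y},\delta_{P_1}[u_E],\dots,\delta_{P_m}[u_E],u_E)$) is \emph{the} continuous viscosity solution of the associated HJI PDE Dirichlet problem with the prescribed boundary data, provided the structural hypotheses of that corollary are met. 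Since \autoref{th_sum_viscosity} already exhibits $V^g$ as \emph{a} continuous viscosity solution of \eqref{HJI} that attains the boundary condition $V^g=g(x_E)$ on $\partial\mathcal{T}$, the uniqueness half of the cited corollary forces $V^g$ to coincide with that value, which is exactly the right-hand side of the asserted identity.

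First I would verify that the game \eqref{dynamic system}--\eqref{HJI} falls under Soravia's hypotheses. The dynamics are Isaacs' simple motion, so the control sets are the fixed closed balls $B(0,1)$, the right-hand sides are Lipschitz in the state uniformly in the controls, and the Hamiltonian $H(\mathbf{y},\mathbf{p})=-\sum_{i=1}^m\alpha_i\|p_{P_i}\|+\|p_E\|$ computed in Section \ref{sec_problem description} is continuous and satisfies the Isaacs min--max $=$ max--min condition, so the upper and lower values agree. The terminal cost $g$ is locally Lipschitz, hence continuous, on $\mathbb{R}^n$. Moreover, because every pursuer is strictly faster than the evader ($\alpha_i>1$), capture is unavoidable: against any measurable $u_E$, the pure-pursuit motion of any single pursuer $i$ drives $\|x_{P_i}(t)-x_E(t)\|$ down to $l_i$ in finite time (indeed \autoref{th_pursuit_strategy}(a) gives $\frac{d}{dt}\|x_{P_i}-x_E\|<1-\alpha_i<0$ along the strategy $\gamma_{P_i}$), so $t_f<\infty$ always and $J$ is finite and well defined. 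This terminability near $\partial\mathcal{T}$ is the playability/controllability ingredient that Soravia's comparison argument requires.

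Second I would assemble the pieces: $V^g$ is continuous on $\Omega\cup\partial\mathcal{T}$ and equal to $g(x_E)$ on $\partial\mathcal{T}$ (both established inside the proof of \autoref{th_sum_viscosity}, where also $\mathcal{D}^*(\mathbf{y})=\{x_E\}$ on $\partial\mathcal{T}$), and it is a viscosity solution of $H(\mathbf{y},DV^g(\mathbf{y}))=0$ on $\Omega$ by \autoref{th_sub2} and \autoref{th_super}; hence $V^g$ is admissible for the uniqueness statement and must equal the Elliott--Kalton value. The main obstacle I anticipate is matching our hypotheses precisely to Soravia's: the boundary $\partial\mathcal{T}$ is noncompact and $g$ is only \emph{locally} Lipschitz, whereas comparison principles are usually stated with uniform (global) assumptions. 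I would circumvent this by localization, exploiting the a priori bound $\|x-x_E\|\le M(\mathbf{y})=\min_i\frac{\|x_{P_i}-x_E\|-l_i}{\alpha_i-1}$ on every point of $\mathcal{D}^*(\mathbf{y})$ (shown in the proof of \autoref{th_uniform_compactness} and reused in \autoref{th_sum_viscosity}): this confines all optimal evader terminal positions, and all trajectories relevant to the game value started near a fixed $\mathbf{y}$, to a compact region of state space on which $g$ is genuinely Lipschitz and the dynamics and Hamiltonian obey the uniform bounds Soravia needs. With this localization in place, \cite[Corollary 2.8]{soravia1993pursuit} applies on neighborhoods and, since the conclusion is pointwise in $\mathbf{y}$, yields $V^g(\mathbf{y})=\sup_{\delta}\inf_{u_E}J$ for every $\mathbf{y}\in\Omega$, completing the proof.
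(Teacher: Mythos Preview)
Your proposal is correct and follows essentially the same approach as the paper: the paper derives \autoref{th_soravia} directly by combining \cite[Corollary 2.8]{soravia1993pursuit} with \autoref{th_sum_viscosity}, without further argument. Your additional hypothesis-checking (Isaacs condition, finite capture time via \autoref{th_pursuit_strategy}(a), and the localization via the bound $M(\mathbf{y})$) simply spells out what the paper leaves implicit in that citation.
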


\begin{remark}
	In this study, we require the capture radii of all pursuers to be positive. When the capture radii of some or all pursuers are zero, $V^g$ remains the viscosity solution of the HJI PDE in \eqref{HJI}. The proofs are similar. Due to space limitations, we do not elaborate on this further. It should be  emphasized that the constraint 
	\begin{align*}
		-((x-x_{P_i})^2-\alpha_i^2(x-x_E)^2-l_i^2)^2+4\alpha_i^2 l_i^2 (x-x_E)^2\le 0
	\end{align*}
	is not suitable for $l_i=0$, because the Mangasarian-Fromovitz regularity condition is not satisfied. The constraint 
	\begin{align*}
		-\left\| x-x_{P_i}\right\| +\alpha_i\left\| x-x_E\right\| \le 0
	\end{align*}
	can be directly changed to
	\begin{align*}
		-(x-x_{P_i})^2+\alpha_i^2(x-x_E)^2\le 0.
	\end{align*}
	The other steps of proofs are identical.
\end{remark}

\section{Optimal strategies}\label{sec_optimal_strategies}
In this section, we discuss the optimal strategies for the quantitative game. 

In the previous studies \cite{fu2023justification,yan2022matching,lee2024solutions}, the authors presented a group of equilibrium strategies for the game. In their studies, the terminal cost function $g$ with respect to the evader's position is convex, which guarantees that the optimal solution of the mathematical program \eqref{program} is unique. The equilibrium strategies are as follows,
\begin{equation}\label{eq_optimal_feedback}
	\begin{split}
		u_{P_i}&=\frac{x_*^{(i)}-x_{P_i}}{\left\|x_*^{(i)}-x_{P_i}\right\|},i=1,\dots,m,\\
		u_E&=\frac{x_*-x_E}{\left\|x_*-x_E \right\| },
	\end{split}
\end{equation}
where $x_*=x_*(x_{P_1},\dots,x_{P_m},x_E)$ is the unique optimal solution of the mathematical program \eqref{program} and
\begin{align*}
	x_*^{(i)}&=x_*^{(i)}(x_{P_1},\dots,x_{P_m},x_E)\\
	&\triangleq x_E+\rho_i\left(  x_{P_i},x_E,\frac{x_*-x_E}{\left\|x_*-x_E \right\| }\right) \frac{x_*-x_E}{\left\|x_*-x_E \right\| }.
\end{align*}
These are state feedback strategies.
When the terminal cost function is not convex, the uniqueness of the optimal solution is not guaranteed. Thus, the state feedback strategies \eqref{eq_optimal_feedback} are not well-defined. Equilibrium state feedback strategies may not exist (see \cite[Appendix]{dorothy2024one}).  

There is a simple and straightforward method to overcome this problem mathematically. We allow the pursuers have the access to the evader's current instantaneous control. Consider the pursuit strategies \eqref{eq_prusuer_current_input}. 
On the one hand, when the pursuers use the strategies \eqref{eq_prusuer_current_input}, according to \autoref{th_pursuit_strategy}, $\mathcal{D}_i(x_{P_i}(t),x_E(t))$ shrinks inward, regardless of the evader's control input function. Because $x_E(t)$ is always in $\mathcal{D}_i(x_{P_i}(t),x_E(t))$, the evader can not leave $\mathcal{D}^*(x_{P_1}^0,\dots,x_{P_m}^0,x_E^0)$ when the pursuers use \eqref{eq_prusuer_current_input}. 
On the other hand, it is well-known that
\begin{align*}
	\mathcal{D}_i(x_{P_i}^0,x_E^0)^\circ=\left\lbrace x\in \mathbb{R}^n :\exists u_E\in \mathcal{U}_0,\forall u_{P_i}\in\mathcal{U}_0,\right.\\
	\left.\exists t\in [t_0,t_f^i(u_{P_i},u_E)) ,x_E(t;x_E^0,u_E)=x\right\rbrace.
\end{align*}
The evader can reach any point in $\mathcal{D}^*(x_{P_1}^0,\dots,x_{P_m}^0,x_E^0)^\circ$ before being captured by any pursuer, regardless of  the pursuers' control input function.
Thus, the pursuers' optimal strategies are \eqref{eq_prusuer_current_input}, while the evader's optimal strategy is moving toward one of the optimal solutions of \eqref{program} by its maximum speed until the evader reaches it and staying there. They constitute a group of equilibrium strategies in the non-anticipative information pattern \cite{bardi1997optimal}. The above statement can be seen as an intuitive explanation of that $V^g$ is the value function of the quantitative game.

Similarly to mass point and Isaacs' simple motion, the pursuers' access to the evader's current instantaneous control is an idealized mathematical assumption. For practical applications, this access may not be allowed. We can approximate this by the delayed availability of $u_E$, i.e. replacing $u_E(t)$ by $u_E(t-\Delta t)$ $(\Delta t >0)$. The evaluation of this approximation is left in our future works.

In \cite{dorothy2024one}, the authors presented a group of $\epsilon$-equilibrium state feedback strategies for this quantitative game when capture radii were zero. See \cite{dorothy2024one} for details.

\section{Applications}\label{sec_application}
In this section, we discuss the application of our work in target defense games to present the research motivation.

We continue to use the concepts and symbols defined in previous sections. Let $T$ be a nonempty closed subset of $\mathbb{R}^n$. $T$ is called target. We assume that $T$ and the locally Lipschitz function $g:\mathbb{R}^n\rightarrow\mathbb{R}$ satisfy
\begin{align}\label{target_lip}
	T=\left\lbrace x\in\mathbb{R}^n:g(x)\le 0\right\rbrace. 
\end{align}
In the target defense game, the evader intends to enter the target before being captured, while the pursuers seek to capture the evader before the evader enters the target.

The problem is under what initial state the pursuers have non-anticipative strategies to prevent the evader entering the target before being captured regardless of the evader’s strategy. This is equivalent to determining the following set,
\begin{align*}
	W_P\triangleq \lbrace \mathbf{y}^0\in \Omega:\exists (\delta_{P_1},\dots,\delta_{P_m})\in \Delta_{\mathbf{y}^0}, \\
	\forall u_E(\cdot) \in \mathcal{U}, \forall t\in[t_0,t_f], x_E(t;x_E^0,u_E)\notin T^\circ \rbrace,
\end{align*} 
where $\mathbf{y}^0=(x_{P_1}^{0\top},\dots,x_{P_m}^{0\top},x_E^{0\top})^\top$ and $\Delta_{\mathbf{y}^0}$ is the set of non-anticipative strategies \cite{elliott1972existence,cardaliaguet1996differential,soravia1993pursuit,mitchell2005time} from $\mathcal{U}_0$ to $\mathcal{U}_0^m$ with $\mathbf{y^0}$ as the initial state. $W_P$ is called the winning set of the pursuers. 

We can determine $W_P$ by $V^g$, as shown in following theorem.
\begin{theorem}
	Consider the target defense game defined above. Assume that $\alpha_i>1,l_i\ge 0,\forall i\in[m]$ and $g$ is locally Lipchitz continuous on $\mathbb{R}^n$ and \eqref{target_lip} holds. Then, 
	\begin{align*}
		W_P=\left\lbrace \mathbf{y}^0\in\Omega:V^g(\mathbf{y}^0)\ge 0\right\rbrace.
	\end{align*}
\end{theorem}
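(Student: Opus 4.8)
The plan is to prove, for every $\mathbf{y}^0=(x_{P_1}^{0\top},\dots,x_{P_m}^{0\top},x_E^{0\top})^\top\in\Omega$, the two equivalences
\[
\mathbf{y}^0\in W_P\ \Longleftrightarrow\ \mathcal{D}^*(\mathbf{y}^0)\cap T^\circ=\emptyset\ \Longleftrightarrow\ V^g(\mathbf{y}^0)\ge 0 ,
\]
so that the whole statement reduces to a geometric claim about the dominance region. The second equivalence is just a rewriting: $V^g(\mathbf{y}^0)=\min_{x\in\mathcal{D}^*(\mathbf{y}^0)}g(x)$, the minimum being attained since $\mathcal{D}^*(\mathbf{y}^0)$ is compact (\autoref{th C-oval boundedness}) and $g$ is continuous; hence $V^g(\mathbf{y}^0)\ge 0$ is the same as $g\ge 0$ on $\mathcal{D}^*(\mathbf{y}^0)$, i.e. $\mathcal{D}^*(\mathbf{y}^0)\cap\{x:g(x)<0\}=\emptyset$, and one then invokes the set identity $T^\circ=\{x:g(x)<0\}$. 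The first equivalence carries the content, and I would establish its two directions separately, drawing on Sections~\ref{sec_pursuit_strategies} and~\ref{sec_optimal_strategies}.

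For $\Leftarrow$, assume $\mathcal{D}^*(\mathbf{y}^0)\cap T^\circ=\emptyset$ and let the pursuers use the feedback law $u_{P_i}=\gamma_{P_i}(x_{P_i},x_E,u_E)$ of~\eqref{eq_prusuer_current_input}; this induces a non-anticipative strategy because the closed-loop trajectories $x_{P_i}(\cdot),x_E(\cdot)$ depend on $u_E$ only through its past. By \autoref{th_pursuit_strategy}(a), capture occurs in finite time and $\|x_{P_i}(t)-x_E(t)\|>l_i$ for $t\in[t_0,t_f)$, so $d_i(x_E(t);x_{P_i}(t),x_E(t))=l_i-\|x_{P_i}(t)-x_E(t)\|<0$, i.e. $x_E(t)\in\mathcal{D}_i(x_{P_i}(t),x_E(t))$; and by \autoref{th_pursuit_strategy}(b), $\mathcal{D}_i(x_{P_i}(t),x_E(t))\subseteq\mathcal{D}_i(x_{P_i}^0,x_E^0)$. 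Intersecting over $i$ and letting $t\to t_f^{-}$ (using closedness of $\mathcal{D}^*$) gives $x_E(t)\in\mathcal{D}^*(\mathbf{y}^0)$ for every $t\in[t_0,t_f]$, hence $x_E(t)\notin T^\circ$ for every admissible $u_E$, i.e. $\mathbf{y}^0\in W_P$. For $\Rightarrow$ (contrapositive), assume $\mathcal{D}^*(\mathbf{y}^0)\cap T^\circ\ne\emptyset$. Since $\mathcal{D}^*(\mathbf{y}^0)=\bigcap_i\mathcal{D}_i(x_{P_i}^0,x_E^0)$ is a bounded intersection of closed convex sets (\autoref{th C-oval boundedness}, \autoref{th_C-oval_convexity}) containing $x_E^0$ in its interior (as $\mathbf{y}^0\in\Omega$ forces $\|x_{P_i}^0-x_E^0\|>l_i$, hence $d_i(x_E^0;x_{P_i}^0,x_E^0)<0$, for all $i$), it is a convex body, hence the closure of its interior; as $T^\circ$ is open we get $\hat x\in\mathcal{D}^*(\mathbf{y}^0)^\circ\cap T^\circ=\bigcap_i\mathcal{D}_i(x_{P_i}^0,x_E^0)^\circ\cap T^\circ$. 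Taking $u_E$ to move straight toward $\hat x$ at unit speed, the evader reaches $\hat x$ at the fixed time $t^*=\|\hat x-x_E^0\|$; moreover $t^*<t_f^i(u_{P_i},u_E)$ for every $i$ and every $u_{P_i}$, because a capture on the segment $[x_E^0,\hat x]$ at some $s\le t^*$ would place the interception point $y$ with $\|y-x_{P_i}^0\|\le\alpha_i s+l_i=\alpha_i\|y-x_E^0\|+l_i$, i.e. $d_i(y;x_{P_i}^0,x_E^0)\ge 0$, contradicting $y\in\mathcal{D}_i(x_{P_i}^0,x_E^0)^\circ$ (which holds by convexity, $y$ lying between the two interior points $x_E^0,\hat x$). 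Hence $t^*<\min_i t_f^i=t_f$, so against every pursuit strategy the evader has a control with $x_E(t^*)=\hat x\in T^\circ$ and $t^*\in[t_0,t_f]$, i.e. $\mathbf{y}^0\notin W_P$. Combining the two directions yields $W_P=\{\mathbf{y}^0\in\Omega:V^g(\mathbf{y}^0)\ge 0\}$.

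The main obstacle is not conceptual but a matter of making the feedback law~\eqref{eq_prusuer_current_input} legitimate inside the non-anticipative (Elliott--Kalton) framework: the closed loop has to be shown well posed for merely measurable $u_E$ (the paper's \autoref{th_pursuit_strategy} is phrased for piecewise smooth trajectories), and the confinement \autoref{th_pursuit_strategy}(b) has to persist in that generality, so that~\eqref{eq_prusuer_current_input} genuinely witnesses $\mathbf{y}^0\in W_P$ for all evader controls. A secondary, purely set-topological point is the passage between $\mathcal{D}^*(\mathbf{y}^0)$, its interior, and the open set $T^\circ$, together with the identity $T^\circ=\{g<0\}$: from $T=\{g\le 0\}$ and continuity of $g$ only the inclusion $\{g<0\}\subseteq T^\circ$ is automatic — this alone already gives $W_P\subseteq\{V^g\ge 0\}$ — whereas the reverse set inclusion, used for $\{V^g\ge 0\}\subseteq W_P$, is where the distance-type nature of $g$ in the target-defense applications is invoked.
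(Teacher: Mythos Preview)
Your argument is sound and takes a genuinely different route from the paper's. The paper does not introduce the intermediate geometric statement $\mathcal{D}^*(\mathbf{y}^0)\cap T^\circ=\emptyset$; instead it leans on \autoref{th_soravia}, which identifies $V^g$ with the upper value $\sup_\delta\inf_{u_E}J$, and argues directly on the payoff $J=g(x_E(t_f))$: rewriting $x_E(t)\notin T^\circ$ as $g(x_E(t))\ge 0$, membership in $W_P$ becomes ``some strategy forces $g(x_E(t))\ge 0$ for all $t\le t_f$'', which trivially gives $g(x_E(t_f))\ge 0$ and hence $V^g\ge 0$; for the reverse the paper uses a freezing trick (if under a given strategy the evader can reach a point with $g<0$ at time $\bar t\le t_f$, let it halt there, so $g(x_E(t_f))=g(x_E(\bar t))<0$) together with attainment of the supremum in \autoref{th_soravia}. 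Thus the paper's proof draws on the viscosity-solution machinery of Section~\ref{sec_verification of viscosity solution}, whereas yours uses only the elementary dominance-region geometry of Subsections~\ref{subsec_Dominance region} and~\ref{sec_pursuit_strategies}. Both obstacles you flag are equally present in the paper's argument: its very first $\Leftrightarrow$ tacitly uses $T^\circ=\{g<0\}$ (the inclusion $T^\circ\subseteq\{g<0\}$, needed precisely for $\{V^g\ge 0\}\subseteq W_P$, does not follow from~\eqref{target_lip} alone --- take $g\equiv 0$), and the attainment of the $\sup$ is delegated to the informal optimal-strategy discussion of Section~\ref{sec_optimal_strategies}, which shares your measurable-versus-piecewise-smooth concern about~\eqref{eq_prusuer_current_input}.
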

\begin{proof}
	We have shown that the optimal strategies exist in Section \ref{sec_optimal_strategies}. According to \eqref{target_lip} and \autoref{th_soravia}, we have
	\begin{align*}
		&\mathbf{y}^0\in W_P\\
		\iff& \exists (\delta_{P_1},\dots,\delta_{P_m})\in \Delta_{\mathbf{y}^0}, \forall u_E(\cdot) \in \mathcal{U},\\
		&\forall t\in[t_0,t_f], g(x_E(t;x_E^0,u_E))\ge0\\
		\Longrightarrow& \exists (\delta_{P_1},\dots,\delta_{P_m})\in \Delta_{\mathbf{y}^0}, \forall u_E(\cdot) \in \mathcal{U},\\
		&g(x_E(t_f;x_E^0,u_E))\ge 0\\
		\iff& \exists (\delta_{P_1},\dots,\delta_{P_m})\in \Delta_{\mathbf{y}^0}, \forall u_E(\cdot) \in \mathcal{U},\\
		& J(\mathbf{y}^0,\delta_{P_1}[u_E],\dots,\delta_{P_m}[u_E],u_E)\ge0\\
		\iff& \exists (\delta_{P_1},\dots,\delta_{P_m})\in \Delta_{\mathbf{y}^0},\\
		& \inf_{u_E\in\mathcal{U}}J(\mathbf{y}^0,\delta_{P_1}[u_E],\dots,\delta_{P_m}[u_E],u_E)\ge0\\
		\iff& \max_{(\delta_{P_1},\dots,\delta_{P_m})\in \Delta_{\mathbf{y}}}
		\inf_{u_E\in\mathcal{U}_0} J(\mathbf{y}^0,\delta_{P_1}[u_E],\dots,\delta_{P_m}[u_E],u_E)\\
		&\ge0\\
		\iff& V^g(\mathbf{y}^0)\ge0
	\end{align*}
	Then, we have $W_P\subseteq \left\lbrace \mathbf{y}^0\in\Omega:V^g(\mathbf{y}^0)\ge0\right\rbrace$. To deserve the equality, we only need to prove that 
	\begin{align*}
		& \exists (\delta_{P_1},\dots,\delta_{P_m})\in \Delta_{\mathbf{y}^0}, \forall u_E(\cdot) \in \mathcal{U},\\
		&g(x_E(t_f;x_E^0,u_E))\ge0\\
		\Longrightarrow&\exists (\delta_{P_1},\dots,\delta_{P_m})\in \Delta_{\mathbf{y}^0}, \forall u_E(\cdot) \in \mathcal{U},\\
		&\forall t\in[t_0,t_f], g(x_E(t;x_E^0,u_E))\ge0
	\end{align*}
	It is equivalent to proving its contrapositive:
	\begin{align*}
		&\forall (\delta_{P_1},\dots,\delta_{P_m})\in \Delta_{\mathbf{y}^0}, \exists u_E(\cdot) \in \mathcal{U}, \\
		&\exists t\in[t_0,t_f], g(x_E(t;x_E^0,u_E))< 0\\
		\Longrightarrow &\forall (\delta_{P_1},\dots,\delta_{P_m})\in \Delta_{\mathbf{y}^0}, \exists u_E(\cdot) \in \mathcal{U}, \\
		&g(x_E(t_f;x_E^0,u_E))< 0.
	\end{align*}
	Let $(\delta_{P_1},\dots,\delta_{P_m})\in \Delta_{\mathbf{y}^0}$ and $u_E(\cdot) \in \mathcal{U}$ satisfy $\exists \bar{t}\in(t_0,t_f(\delta_{P_1}[u_E],\dots,\delta_{P_m}[u_E],u_E);x_E^0,u_E)]$, $ g(x_E(\bar{t};x_E^0,u_E))< 0$. Let
	\begin{align*}
		\bar{u}_E(t)=
		\begin{cases}
			u_E(t), &t\in[t_0,\bar{t}]\\
			0,  &t\in(\bar{t},+\infty)
		\end{cases}
	\end{align*}
	Obviously, $\bar{u}_E(\cdot)\in \mathcal{U}$ and $\forall t\in [\bar{t},+\infty),x_E(t;x_E^0,\bar{u}_E)=x_E(\bar{t};x_E^0,u_E)$. Thus,
	\begin{align*}
		&g(x_E(t_f(\delta_{P_1}[\bar{u}_E],\dots,\delta_{P_m}[\bar{u}_E],\bar{u}_E);x_E^0,\bar{u}_E))\\
		=&g(x_E(\bar{t};x_E^0,u_E))< 0,
	\end{align*}
	The proof is completed.
\end{proof}

Given the initial positions of the pursuers and the evader, we can determine whether the pursuers can win the game by calculating $V^g$ (i.e. the optimal value of the parametric mathematical programming problem \eqref{modified_program}) rather than solving the HJI PDE Dirichlet problem \eqref{HJI} according to \autoref{th_sum_viscosity}.

At the beginning of this section, we assume that the target can be represented by a locally Lipschitz function as \eqref{target_lip}. This assumption is reasonable. For any nonempty set $A\subseteq\mathbb{R}^n$, we define the following function:
\begin{align*}
	\tilde{d}_A(x)\triangleq
	\begin{cases}
		-\inf_{y\in \partial A}\left\| x-y\right\|, &x\in A\\
		\inf_{y\in \partial A}\left\| x-y\right\|, &x\notin A
	\end{cases}
\end{align*}
$\tilde{d}_A$ is called the signed distance function of $A$. It is obvious that $A=\left\lbrace x\in \mathbb{R}^n:d_A(x)\le 0\right\rbrace$ if $A$ is a closed subset of $\mathbb{R}^n$. Below is a well-known theorem about signed distance functions.
\begin{theorem}\label{signed_distance_lip}
	For any set $A$ contained in $\mathbb{R}^n$ (where $A$ is not requested to be close), the signed distance function $\tilde{d}_A:\mathbb{R}^n\rightarrow\mathbb{R}$ is Lipschitz continuous.
\end{theorem}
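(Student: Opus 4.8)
The plan is to prove the stronger fact that $\tilde d_A$ is Lipschitz continuous of rank $1$. If $\partial A=\emptyset$ the claim is trivial (the function is constant, or the degenerate cases $A=\emptyset$, $A=\mathbb{R}^n$ do not give a real-valued function), so assume $\partial A\neq\emptyset$ and set $\delta(x)\triangleq\inf_{y\in\partial A}\|x-y\|$, which is then a well-defined nonnegative real number for every $x$. The function $\delta$ is just the ordinary distance to the closed set $\partial A$ and is $1$-Lipschitz by the usual one-line argument: for any $x_1,x_2$ and any $y\in\partial A$, $\delta(x_1)\le\|x_1-y\|\le\|x_1-x_2\|+\|x_2-y\|$, and taking the infimum over $y$ (and symmetrizing) gives $|\delta(x_1)-\delta(x_2)|\le\|x_1-x_2\|$. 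By the definition of $\tilde d_A$ we have $\tilde d_A=-\delta$ on $A$ and $\tilde d_A=\delta$ off $A$; in particular $\tilde d_A\le 0$ on $A$, $\tilde d_A\ge 0$ off $A$, and $|\tilde d_A(x)|=\delta(x)$ everywhere.

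Fix $x_1,x_2\in\mathbb{R}^n$ and split into two cases. If $x_1$ and $x_2$ lie on the same side (both in $A$, or both outside $A$), then $\tilde d_A(x_1)$ and $\tilde d_A(x_2)$ have the same sign, so $|\tilde d_A(x_1)-\tilde d_A(x_2)|=|\delta(x_1)-\delta(x_2)|\le\|x_1-x_2\|$, and we are done. The substantive case is when, say, $x_1\in A$ and $x_2\notin A$; then $\tilde d_A(x_1)\le 0\le\tilde d_A(x_2)$, hence $|\tilde d_A(x_1)-\tilde d_A(x_2)|=\delta(x_1)+\delta(x_2)$. The key geometric fact is that the closed segment $[x_1,x_2]$ must meet $\partial A$: if $x_1\in\partial A$ or $x_2\in\partial A$ we simply take that endpoint; otherwise $x_1\in A^\circ$ and $x_2\in(\overline A)^c$, two disjoint open sets, which cannot together cover the connected set $[x_1,x_2]$ unless $[x_1,x_2]\cap\partial A\neq\emptyset$. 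Choosing $z\in[x_1,x_2]\cap\partial A$, convexity of the segment gives $\|x_1-z\|+\|z-x_2\|=\|x_1-x_2\|$, and $z\in\partial A$ forces $\delta(x_1)\le\|x_1-z\|$ and $\delta(x_2)\le\|z-x_2\|$; adding these yields $|\tilde d_A(x_1)-\tilde d_A(x_2)|=\delta(x_1)+\delta(x_2)\le\|x_1-x_2\|$, which completes the proof.

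The only delicate point is the connectedness step, and it is precisely where the absence of a closedness hypothesis on $A$ has to be handled carefully: one must not assume $x_1$ is an interior point of $A$, so the sub-case where an endpoint already lies on $\partial A$ should be dispatched first, before invoking that $A^\circ$ and $(\overline A)^c=\mathbb{R}^n\setminus\overline A$ are disjoint open sets whose union omits only $\partial A$. An alternative route that avoids the case analysis at the cost of a worse constant is to verify the identity $\tilde d_A(x)=\operatorname{dist}(x,A)-\operatorname{dist}(x,\mathbb{R}^n\setminus A)$, where $\operatorname{dist}(x,S)=\inf_{y\in S}\|x-y\|$: each term is $1$-Lipschitz, so the difference is $2$-Lipschitz, which already gives the stated conclusion. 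Checking that identity amounts to evaluating both sides on $A^\circ$, on $\partial A$, and on $(\overline A)^c$ separately, each being elementary once one knows (again by the segment-crossing observation) that for $x\in A^\circ$ the nearest point of $\mathbb{R}^n\setminus A$ lies on $\partial A$.
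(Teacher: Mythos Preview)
Your proof is correct and yields the sharp Lipschitz constant $1$. The argument is the standard one: reduce to the unsigned distance $\delta(\cdot)=\operatorname{dist}(\cdot,\partial A)$, handle the same-side case by the $1$-Lipschitz property of $\delta$, and in the mixed case produce a point $z\in[x_1,x_2]\cap\partial A$ via connectedness of the segment, so that $\delta(x_1)+\delta(x_2)\le\|x_1-z\|+\|z-x_2\|=\|x_1-x_2\|$. Your handling of the degenerate case $\partial A=\emptyset$ and of the sub-case where an endpoint already lies on $\partial A$ is careful and correct.

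The paper defers its proof of this theorem to the supplementary material, which is included only as a compiled PDF and not in the \LaTeX\ source provided, so a line-by-line comparison is not possible here. That said, the result is classical and your segment-crossing argument is exactly the textbook proof; it is very likely the same route the supplement takes. Your closing remark about the alternative identity $\tilde d_A=\operatorname{dist}(\cdot,A)-\operatorname{dist}(\cdot,\mathbb{R}^n\setminus A)$ is a nice aside, though note that verifying that identity ultimately leans on the same boundary-crossing observation, so it does not really avoid the geometric step---it just repackages it.
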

The proof is presented in the supplementary material.
According to \autoref{signed_distance_lip}, any closed set contained by $\mathbb{R}^n$ can be represented by a locally Lipschitz continuous function as \eqref{target_lip}. Thus, our work covers any form of target. By comparison with it, the works of \cite{fu2023justification,lee2024solutions} only consider the case where the target is convex.
\begin{remark}
	Using the signed distance function is not the only way to represent the target by a locally Lipschitz continuous function as \eqref{target_lip}. For example, consider $T=\left\lbrace (y_1,y_2)^\top \in \mathbb{R}^2:y_1^2-y_2\le0\right\rbrace $. It is obvious that $\tilde{d}_T(y_1,y_2)\neq y_1^2-y_2$.
\end{remark}
\section{Conclusion}\label{sec_conclusion}
In this study, we investigate a multiple-pursuer single-evader quantitative pursuit-evasion game. The payoff function of the game includes only the terminal cost which is related only to the evader's terminal position. We verify that a candidate for the value generated by the geometric method is the viscosity solution of the corresponding HJI PDE Dirichlet problem, without requiring the convexity of the terminal cost. We also discuss the optimal strategies for the game. To extend the current study, we will consider the multiple-pursuer multiple-evader pursuit-evasion game in future research.

\section*{References}
\bibliographystyle{IEEEtran}
\bibliography{autosam}

@book{elliott1972existence,
  title={The existence of value in differential games},
  author={Elliott, Robert James and Kalton, Nigel John},
  volume={126},
  year={1972},
  publisher={American Mathematical Soc.}
}

@article{dutkevich1972games,
  title={Games with a “life-line”. The case of l-capture},
  author={Dutkevich, Yu G and Petrosyan, LA},
  journal={SIAM Journal on Control},
  volume={10},
  number={1},
  pages={40--47},
  year={1972},
  publisher={SIAM}
}

@article{yan2021optimal,
  title={Optimal strategies for the lifeline differential game with limited lifetime},
  author={Yan, Rui and Shi, Zongying and Zhong, Yisheng},
  journal={International Journal of Control},
  volume={94},
  number={8},
  pages={2238--2251},
  year={2021},
  publisher={Taylor \& Francis}
}

@book {MR0210469,
    AUTHOR = {Isaacs, Rufus},
     TITLE = {Differential games. {A} mathematical theory with applications
              to warfare and pursuit, control and optimization},
 PUBLISHER = {John Wiley \& Sons, Inc., New York-London-Sydney},
      YEAR = {1965},
     PAGES = {xvii+384},
   MRCLASS = {90.75 (34.00)},
  MRNUMBER = {0210469},
MRREVIEWER = {L. Cesari},
}

@article{yan2022matching,
  title={Matching-based capture strategies for 3D heterogeneous multiplayer reach-avoid differential games},
  author={Yan, Rui and Duan, Xiaoming and Shi, Zongying and Zhong, Yisheng and Bullo, Francesco},
  journal={Automatica},
  volume={140},
  pages={110207},
  year={2022},
  publisher={Elsevier}
}

@article{fu2023justification,
  title={Justification of the geometric solution of a target defense game with faster defenders and a convex target area using the HJI equation},
  author={Fu, Han and Liu, Hugh H-T},
  journal={Automatica},
  volume={149},
  pages={110811},
  year={2023},
  publisher={Elsevier}
}

@article{cardaliaguet1996differential,
  title={A differential game with two players and one target},
  author={Cardaliaguet, Pierre},
  journal={SIAM Journal on Control and Optimization},
  volume={34},
  number={4},
  pages={1441--1460},
  year={1996},
  publisher={SIAM}
}

@article{mitchell2005time,
  title={A time-dependent Hamilton-Jacobi formulation of reachable sets for continuous dynamic games},
  author={Mitchell, Ian M and Bayen, Alexandre M and Tomlin, Claire J},
  journal={IEEE Transactions on Automatic Control},
  volume={50},
  number={7},
  pages={947--957},
  year={2005},
  publisher={IEEE}
}

@article{garcia2021cooperative2,
  title={Cooperative target protection from a superior attacker},
  author={Garcia, Eloy},
  journal={Automatica},
  volume={131},
  pages={109696},
  year={2021},
  publisher={Elsevier}
}

@article{gauvin1982differential,
  title={Differential properties of the marginal function in mathematical programming},
  author={Gauvin, Jacques and Dubeau, Fran{\c{c}}ois},
  journal={Optimality and Stability in Mathematical Programming},
  pages={101--119},
  year={1982},
  publisher={Springer}
}

@book{rockafellar1997convex,
 ISBN = {9780691015866},
 abstract = {Available for the first time in paperback, R. Tyrrell Rockafellar's classic study presents readers with a coherent branch of nonlinear mathematical analysis that is especially suited to the study of optimization problems. Rockafellar's theory differs from classical analysis in that differentiability assumptions are replaced by convexity assumptions. The topics treated in this volume include: systems of inequalities, the minimum or maximum of a convex function over a convex set, Lagrange multipliers, minimax theorems and duality, as well as basic results about the structure of convex sets and the continuity and differentiability of convex functions and saddle- functions.This book has firmly established a new and vital area not only for pure mathematics but also for applications to economics and engineering. A sound knowledge of linear algebra and introductory real analysis should provide readers with sufficient background for this book. There is also a guide for the reader who may be using the book as an introduction, indicating which parts are essential and which may be skipped on a first reading.},
 author = {Rockafellar, R Tyrrell},
 publisher = {Princeton University Press},
 title = {Convex analysis},
 urldate = {2024-01-09},
 year = {1970}
}

@book{bardi1997optimal,
  title={Optimal control and viscosity solutions of Hamilton-Jacobi-Bellman equations},
  author={Bardi, Martino and Dolcetta, Italo Capuzzo and others},
  volume={12},
  year={1997},
  publisher={Springer}
}

@article{blumenson1960derivation,
  title={A derivation of n-dimensional spherical coordinates},
  author={Blumenson, LE},
  journal={The American Mathematical Monthly},
  volume={67},
  number={1},
  pages={63--66},
  year={1960},
  publisher={JSTOR}
}

@article{soravia1993pursuit,
  title={Pursuit--evasion problems and viscosity solutions of Isaacs equations},
  author={Soravia, Pierpaolo},
  journal={SIAM Journal on Control and Optimization},
  volume={31},
  number={3},
  pages={604--623},
  year={1993},
  publisher={SIAM}
}

@book{clarke1990optimization,
author = {Clarke, Frank H.},
title = {Optimization and Nonsmooth Analysis},
publisher = {SIAM},
year = {1990},
doi = {10.1137/1.9781611971309},
address = {},
edition   = {},
}

@inproceedings{garcia2019strategies,
  title={Strategies for defending a coastline against multiple attackers},
  author={Garcia, Eloy and Von Moll, Alexander and Casbeer, David W and Pachter, Meir},
  booktitle={2019 IEEE 58th conference on decision and control (CDC)},
  pages={7319--7324},
  year={2019},
  organization={IEEE}
}

@article{pachter2017differential,
  title={Differential game of guarding a target},
  author={Pachter, Meir and Garcia, Eloy and Casbeer, David W},
  journal={Journal of Guidance, Control, and Dynamics},
  volume={40},
  number={11},
  pages={2991--2998},
  year={2017},
  publisher={American Institute of Aeronautics and Astronautics}
}

@article{lee2021guarding,
  title={Guarding a convex target set from an attacker in Euclidean spaces},
  author={Lee, Yoonjae and Bakolas, Efstathios},
  journal={IEEE Control Systems Letters},
  volume={6},
  pages={1706--1711},
  year={2021},
  publisher={IEEE}
}

@article{lee2024solutions,
  title={Solutions for Multiple-Defender Single-Attacker Convex Target Guarding Games: Verification via Parametric Optimization},
  author={Lee, Yoonjae and Bakolas, Efstathios},
  journal={IEEE Transactions on Automatic Control},
  year={2024},
  publisher={IEEE}
}

@article{crandall1983viscosity,
  title={Viscosity solutions of Hamilton-Jacobi equations},
  author={Crandall, Michael G and Lions, Pierre-Louis},
  journal={Transactions of the American mathematical society},
  volume={277},
  number={1},
  pages={1--42},
  year={1983}
}

@article{evans1984differential,
  title={Differential games and representation formulas for solutions of Hamilton-Jacobi-Isaacs equations},
  author={Evans, Lawrence C and Souganidis, Panagiotis E},
  journal={Indiana University mathematics journal},
  volume={33},
  number={5},
  pages={773--797},
  year={1984},
  publisher={JSTOR}
}

@article{lions1985differential,
  title={Differential games, optimal control and directional derivatives of viscosity solutions of Bellman’s and Isaacs’ equations},
  author={Lions, P-L and Souganidis, Panagiotis E},
  journal={SIAM Journal on Control and Optimization},
  volume={23},
  number={4},
  pages={566--583},
  year={1985},
  publisher={SIAM}
}

@article{yan2020guarding,
  title={Guarding a subspace in high-dimensional space with two defenders and one attacker},
  author={Yan, Rui and Shi, Zongying and Zhong, Yisheng},
  journal={IEEE Transactions on Cybernetics},
  volume={52},
  number={5},
  pages={3998--4011},
  year={2020},
  publisher={IEEE}
}

@article{falcone2006numerical,
  title={Numerical methods for differential games based on partial differential equations},
  author={Falcone, Maurizio},
  journal={International Game Theory Review},
  volume={8},
  number={02},
  pages={231--272},
  year={2006},
  publisher={World Scientific}
}

@article{rockafellar1982lagrange,
  title={Lagrange multipliers and subderivatives of optimal value functions in nonlinear programming},
  author={Rockafellar, R Tyrrell},
  journal={Nondifferential and Variational Techniques in Optimization},
  pages={28--66},
  year={1982},
  publisher={Springer}
}

@article{roberts1974another,
  title={Another proof that convex functions are locally Lipschitz},
  author={Roberts, A Wayne and Varberg, Dale E},
  journal={The American Mathematical Monthly},
  volume={81},
  number={9},
  pages={1014--1016},
  year={1974},
  publisher={Taylor \& Francis}
}

@book{evans1992measure,
  title={Measure theory and fine properties of functions},
  author={Evans, Lawrence Craig and  Gariepy, Ronald F.},
  year={1992},
  publisher={CRC PRESS}
}

@article{chen2016multiplayer,
  title={Multiplayer reach-avoid games via pairwise outcomes},
  author={Chen, Mo and Zhou, Zhengyuan and Tomlin, Claire J},
  journal={IEEE Transactions on Automatic Control},
  volume={62},
  number={3},
  pages={1451--1457},
  year={2016},
  publisher={IEEE}
}

@article{dorothy2024one,
  title={One apollonius circle is enough for many pursuit-evasion games},
  author={Dorothy, Michael and Maity, Dipankar and Shishika, Daigo and Von Moll, Alexander},
  journal={Automatica},
  volume={163},
  pages={111587},
  year={2024},
  publisher={Elsevier}
}

@article{garcia2020optimal,
  title={Optimal strategies for a class of multi-player reach-avoid differential games in 3d space},
  author={Garcia, Eloy and Casbeer, David W and Pachter, Meir},
  journal={IEEE Robotics and Automation Letters},
  volume={5},
  number={3},
  pages={4257--4264},
  year={2020},
  publisher={IEEE}
}

@article{garcia2020multiple,
  title={Multiple pursuer multiple evader differential games},
  author={Garcia, Eloy and Casbeer, David W and Von Moll, Alexander and Pachter, Meir},
  journal={IEEE Transactions on Automatic Control},
  volume={66},
  number={5},
  pages={2345--2350},
  year={2020},
  publisher={IEEE}
}

@article{xu2024optimal,
  title={Optimal Strategies of a Two-lifeline Differential Game},
  author={Xu, Ningsheng and Huang, Weiwen and Liang, Li and Deng, Fang},
  journal={IEEE Transactions on Automatic Control},
  year={2024},
  publisher={IEEE}
}

@article{xu2024one,
  title={One superior pursuer and multiple-evader differential games with two lifelines},
  author={Xu, Ningsheng and Deng, Fang and Huang, Weiwen and Liang, Li and Shi, Xiang},
  journal={European Journal of Control},
  volume={80},
  pages={101130},
  year={2024},
  publisher={Elsevier}
}

@article{selvakumar2019feedback,
  title={Feedback strategies for a reach-avoid game with a single evader and multiple pursuers},
  author={Selvakumar, Jhanani and Bakolas, Efstathios},
  journal={IEEE transactions on cybernetics},
  volume={51},
  number={2},
  pages={696--707},
  year={2019},
  publisher={IEEE}
}

@inproceedings{shishika2021partial,
  title={Partial information target defense game},
  author={Shishika, Daigo and Maity, Dipankar and Dorothy, Michael},
  booktitle={2021 IEEE International Conference on Robotics and Automation (ICRA)},
  pages={8111--8117},
  year={2021},
  organization={IEEE}
}

@article{von2022circular,
  title={Circular target defense differential games},
  author={Von Moll, Alexander and Pachter, Meir and Shishika, Daigo and Fuchs, Zachariah},
  journal={IEEE Transactions on Automatic Control},
  volume={68},
  number={7},
  pages={4065--4078},
  year={2022},
  publisher={IEEE}
}

@inproceedings{zhou2012general,
  title={A general, open-loop formulation for reach-avoid games},
  author={Zhou, Zhengyuan and Takei, Ryo and Huang, Haomiao and Tomlin, Claire J},
  booktitle={2012 IEEE 51st IEEE conference on decision and control (CDC)},
  pages={6501--6506},
  year={2012},
  organization={IEEE}
}

@article{yan2023homicidal,
  title={Homicidal Chauffeur Reach-Avoid Games via Guaranteed Winning Strategies},
  author={Yan, Rui and Deng, Ruiliang and Lai, Haowen and Zhang, Weixian and Shi, Zongying and Zhong, Yisheng},
  journal={IEEE Transactions on Automatic Control},
  year={2023},
  publisher={IEEE}
}

@article{yan2024multiplayer,
  title={Multiplayer Homicidal Chauffeur reach-avoid games: A pursuit enclosure function approach},
  author={Yan, Rui and Duan, Xiaoming and Zou, Rui and He, Xin and Shi, Zongying and Bullo, Francesco},
  journal={Automatica},
  volume={167},
  pages={111770},
  year={2024},
  publisher={Elsevier}
}

@article{yan2025pursuit,
  author={Yan, Rui and Mi, Shuai and Duan, Xiaoming and Chen, Jintao and Ji, Xiangyang},
  journal={IEEE Transactions on Automatic Control}, 
  title={Pursuit Winning Strategies for Reach-Avoid Games With Polygonal Obstacles}, 
  year={2025},
  volume={70},
  number={2},
  pages={814-829},
  doi={10.1109/TAC.2024.3438806}
}

@article{deng2023multiple,
  title={Multiple-Pursuer Single-Evader Reach-Avoid Games in Constant Flow Fields},
  author={Deng, Ruiliang and Zhang, Weixian and Yan, Rui and Shi, Zongying and Zhong, Yisheng},
  journal={IEEE Transactions on Automatic Control},
  year={2023},
  publisher={IEEE}
}

@article{clarke1976new,
  title={A new approach to Lagrange multipliers},
  author={Clarke, Frank H},
  journal={Mathematics of Operations Research},
  volume={1},
  number={2},
  pages={165--174},
  year={1976},
  publisher={INFORMS}
}

\includepdf[pages={1,2,3,4,5}]{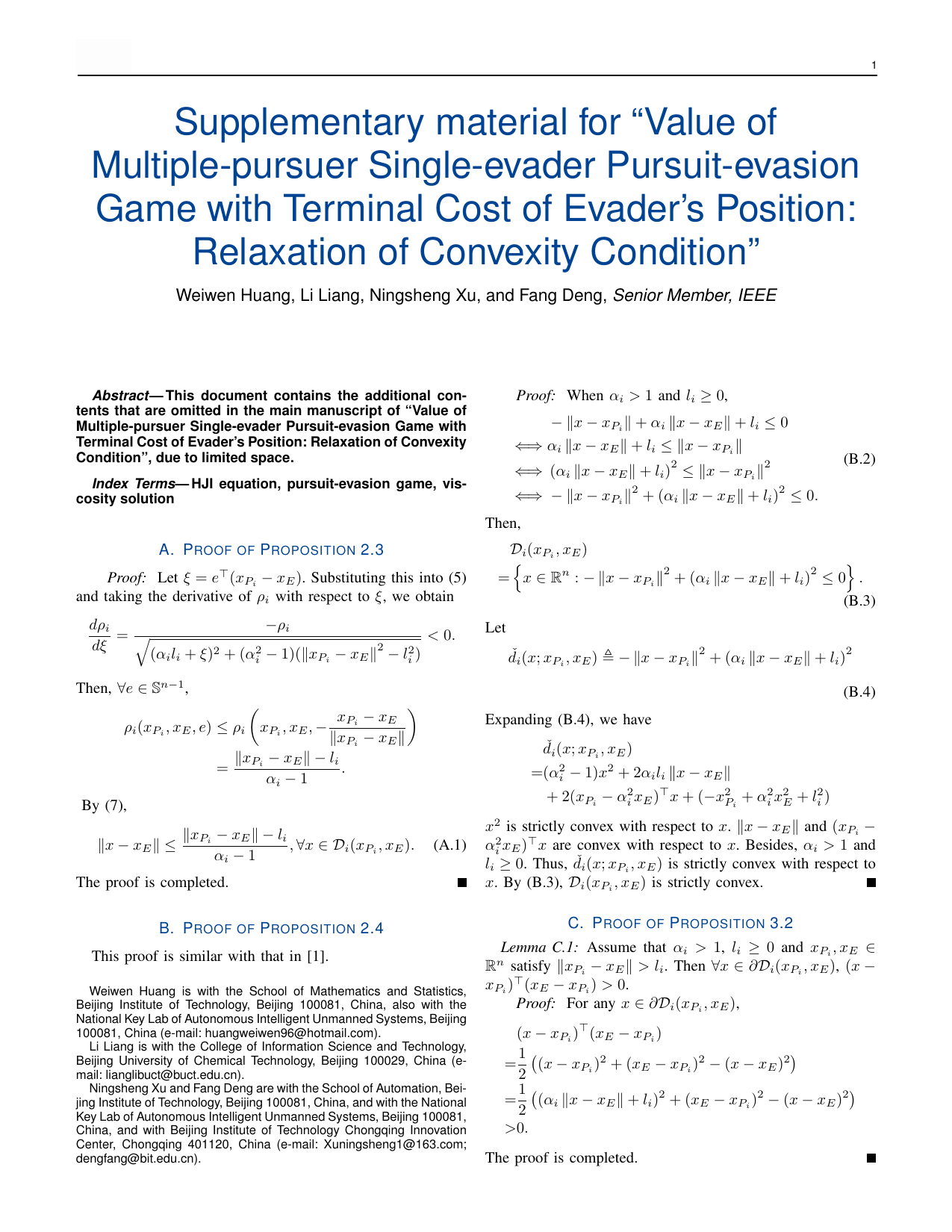}

\end{document}